\documentclass[11pt,reqno,a4paper]{amsart}

\usepackage{fbb}
\usepackage[T1]{fontenc}  
\usepackage[utf8]{inputenc}
\usepackage{lmodern}
\usepackage{microtype}
\usepackage{marginnote}

\usepackage{amsmath,amssymb,amsfonts,amsthm,mathtools,mathrsfs}
\usepackage{xcolor}
\usepackage{url}
\usepackage[noadjust]{cite}

\usepackage[top=2.5cm,bottom=2.4cm,left=2.8cm,right=2.8cm,headsep=0.2in,headheight=14pt]{geometry}
\usepackage{fancyhdr}
\usepackage[colorlinks=true,
  linkcolor=blue!50!green,
  citecolor=blue!50!green,
  urlcolor=blue!50!green]{hyperref}
\hypersetup{
  pdfauthor={Mayukh Mukherjee, Soumyadeb Samanta and Soumyadip Thandar},
  pdftitle={Pointed blow-downs, cohomology, and multiplicative gaps for nilpotent quasi-isometries},
  pdfsubject={Quasi-isometries of finitely generated nilpotent groups},
  pdfkeywords={nilpotent groups, quasi-isometries, asymptotic cones, Pansu differentials, invariant means, cohomological induction}
}
\usepackage[nameinlink,capitalize,noabbrev]{cleveref}

\theoremstyle{plain}
\newtheorem{theorem}{Theorem}[section]
\crefname{theorem}{theorem}{theorems}
\Crefname{theorem}{Theorem}{Theorems}
\newtheorem{lemma}[theorem]{Lemma}
\crefname{lemma}{lemma}{lemmas}
\Crefname{lemma}{Lemma}{Lemmas}
\newtheorem{proposition}[theorem]{Proposition}
\crefname{proposition}{proposition}{propositions}
\Crefname{proposition}{Proposition}{Propositions}
\newtheorem{corollary}[theorem]{Corollary}
\crefname{corollary}{corollary}{corollaries}
\Crefname{corollary}{Corollary}{Corollaries}

\theoremstyle{definition}
\newtheorem{definition}[theorem]{Definition}
\crefname{definition}{definition}{definitions}
\Crefname{definition}{Definition}{Definitions}

\crefname{example}{example}{examples}
\Crefname{example}{Example}{Examples}

\theoremstyle{remark}
\newtheorem{remark}[theorem]{Remark}
\crefname{remark}{remark}{remarks}
\Crefname{remark}{Remark}{Remarks}

\theoremstyle{plain}
\newtheorem{mainthm}{Theorem}

\crefname{mainthm}{Theorem}{Theorems}
\Crefname{mainthm}{Theorem}{Theorems}

\crefname{section}{section}{sections}
\Crefname{section}{Section}{Sections}
\crefname{appendix}{Appendix}{Appendices}
\Crefname{appendix}{Appendix}{Appendices}
\crefname{subsection}{subsection}{subsections}
\Crefname{subsection}{Subsection}{Subsections}
\crefname{subsubsection}{subsection}{subsections}
\Crefname{subsubsection}{Subsection}{Subsections}
\crefname{equation}{equation}{equations}
\Crefname{equation}{Equation}{Equations}

\newcommand{\RR}{\mathbb R}
\newcommand{\ZZ}{\mathbb Z}
\newcommand{\NN}{\mathbb N}
\newcommand{\Hom}{\operatorname{Hom}}

\begin{document}

\title[Cohomology and multiplicative gaps]{Cohomology and multiplicative gaps for quasi-isometries of nilpotent groups} \author{Mayukh Mukherjee}

\address{Department of Mathematics, Indian Institute of Technology Bombay, Powai, Mumbai 400076, India} \email{mathmukherjee@gmail.com}

\author{Soumyadeb Samanta} \address{Department of Mathematics, Indian Institute of Technology Bombay, Powai, Mumbai 400076, India} \email{soumyadeb@iitb.ac.in}

\author{Soumyadip Thandar} \address{Theoretical Statistics and Mathematics Unit, Indian Statistical Institute, Kolkata 700108, India} \email{stsoumyadip@gmail.com}

\subjclass[2020]{Primary 20F65, 20F69; Secondary 20F18, 20J06, 22E25, 43A07}

\begin{abstract}
We construct two word metrics on the same seven-dimensional torsion-free nilpotent lattice whose asymptotic cones are isometric, but for which every quasi-isometry has multiplicative constant at least $1+\varepsilon$, for one fixed $\varepsilon>0$.

We use a convex invariant of a fixed quasi-isometry $\Psi:N\to M$. The displacement cocycle of $\Psi$ and the right-invariant means on $N$ determine a compact convex set $\mathcal A(\Psi)\subseteq\operatorname{Hom}(V_N,V_M)$. If $\mathscr G_{\mathrm{pt}}(\Psi)$ denotes the first-layer maps of the graded homomorphisms that occur as blow-downs of $\Psi$ with varying basepoints, then
$$
\mathcal A(\Psi)=\operatorname{conv}\mathscr G_{\mathrm{pt}}(\Psi), \qquad\operatorname{Ext}\mathcal A(\Psi)\subseteq\mathscr G_{\mathrm{pt}}(\Psi).
$$
For every extreme point $L\in\mathcal A(\Psi)$, the dual map $L^*$ is also the degree-one part of an isomorphism of the full real cohomology rings. This cohomological compatibility, together with the realization of first-layer Pansu differentials by invariant means, yields the multiplicative constant gap stated above.  
\end{abstract}

\maketitle

\section{Introduction}

The asymptotic cone records the first-order metric geometry of a nilpotent group. Even for two word metrics on the same group, one may ask whether an isometry of their cones can be reflected by quasi-isometries whose multiplicative constants approach one. We show that this need not happen. There is a seven-dimensional torsion-free nilpotent group $\Gamma$ with finite symmetric generating sets $S,T$ and $\varepsilon>0$ such that the asymptotic cones of $(\Gamma,d_S)$ and $(\Gamma,d_T)$ are isometric, but every $(\Lambda,A)$-quasi-isometry between them satisfies
$$
 \Lambda\ge 1+\varepsilon.
$$
The obstruction comes from the compatibility between averaged displacement maps of a quasi-isometry and the cohomology ring of $\Gamma$.

We first describe the object that detects this obstruction. Let $\Psi:N\to M$ be a quasi-isometry between finitely generated torsion-free nilpotent groups. Normalize $\Psi(e_N)=e_M$ and write
$$
 \Phi_n(x):=\pi_M\bigl(\Psi(x)^{-1}\Psi(xn)\bigr),
 \qquad x,n\in N,
$$
where $\pi_N:N\to V_N=N_{\mathrm{ab}}\otimes_\ZZ\RR$ and $\pi_M:M\to V_M=M_{\mathrm{ab}}\otimes_\ZZ\RR$ are the canonical maps. If $\mathsf A$ is a right-invariant mean on $N$, the identity 
$$
 L_{\mathsf A}\pi_N(n):=\mathsf A[\Phi_n]
$$
extends to a linear map $V_N\to V_M$. The resulting compact convex set is
$$
 \mathcal A(\Psi):=
 \{L_{\mathsf A}:\mathsf A\text{ is a right-invariant mean on }N\}.
$$
A fixed-basepoint blow-down of $\Psi$ is a bi-Lipschitz map $\Psi_\infty$ between the Carnot asymptotic cones $N_\infty$ and $M_\infty$, and is Pansu differentiable almost everywhere. Let $\mathscr D_1(\Psi)$ be the set of the first-layer differentials $D_p^1\Psi_\infty$ obtained from all such blow-downs and all their differentiability points. Our first result realizes each of these differentials by averages of $\Phi_n$.

\begin{mainthm}\label{thm:main-attainment}
Let $\Psi:N\to M$ be a normalized $(\Lambda,A)$-quasi-isometry between finitely generated torsion-free nilpotent groups with fixed word metrics. Suppose that the rescaled maps $\Psi_j:=\sigma_{r_j}^M\circ\Psi\circ(\sigma_{r_j}^N)^{-1}$ converge to $\Psi_\infty$ along $r_j\to\infty$, and let $p$ be a Pansu differentiability point of $\Psi_\infty$. There are centers $c_j\in N$ and integers $\rho_j\to\infty$ such that
$$
 \frac{\rho_j}{r_j}\longrightarrow0,
 \qquad \sigma_{r_j}^N(c_j)\longrightarrow p,
$$
and, simultaneously for every $n\in N$,
\begin{equation}\label{eq:intro-concrete-realization}
 \frac1{|B_{\rho_j}|}\sum_{x\in c_jB_{\rho_j}}\Phi_n(x)
 \longrightarrow D_p^1\Psi_\infty\,\pi_N(n).
\end{equation}
The translated balls $c_jB_{\rho_j}$ form a right F{\o}lner sequence. Moreover, $\mathcal A(\Psi)$ is compact and convex, is unchanged when $\Psi$ is replaced by a map at bounded distance from it, and
\begin{equation}\label{eq:intro-sandwich}
 \overline{\operatorname{conv}}\mathscr D_1(\Psi)
 \subseteq\mathcal A(\Psi)
 \subseteq
 \bigl\{L:\ell_{S_M}(Lv)\le\Lambda\ell_{S_N}(v)
                  \text{ for every }v\in V_N\bigr\}.
\end{equation}
If $\mathcal A(\Psi)$ is a singleton, then every fixed-basepoint blow-down of $\Psi$ is the same graded $\Lambda$-bi-Lipschitz isomorphism $N_\infty\to M_\infty$.
\end{mainthm}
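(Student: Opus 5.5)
The plan is to read the averages of $\Phi_n$ over translated balls directly off the Pansu differential at $p$, and to obtain every other claim from this realization together with elementary properties of right-invariant means.

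\textbf{Step 1: local averages equal the differential.} Choose $c_j\in N$ with $\sigma_{r_j}^N(c_j)\to p$. Pansu differentiability at $p$ says that on the cone-ball of radius $\delta$ about $p$, $\Psi_\infty$ agrees with $\Psi_\infty(p)\cdot D_p\Psi_\infty(p^{-1}\,\cdot)$ up to error $o(\delta)\cdot\delta$. Using the convergence $\Psi_j\to\Psi_\infty$, which is uniform on compacts, choose $\delta_j\to0$ slowly and set $\rho_j:=\lfloor\delta_j r_j\rfloor$. Then $\rho_j\to\infty$ and $\rho_j/r_j\to0$.

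For $x\in c_jB_{\rho_j}$ and a fixed $n$, consider the abelianized displacement $\pi_M(\Psi(x)^{-1}\Psi(xn))$. A single increment of length $|n|$ is not controlled by differentiability at scale $\rho_j$. So instead telescope along the segment $x,xn,xn^2,\dots,xn^k$ with $k\approx\epsilon\rho_j$. Because $\pi_M$ is a homomorphism into an abelian group,
\[
\sum_{i<k}\Phi_n(xn^i)=\pi_M\bigl(\Psi(x)^{-1}\Psi(xn^k)\bigr).
\]
After rescaling by $r_j$, the right-hand side is approximately $k\,D_p^1\Psi_\infty\pi_N(n)$, with error $o(k)$, uniformly over $x\in c_jB_{\rho_j}$.

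Averaging the left-hand side over the ball, the shifts $x\mapsto xn^i$ move $c_jB_{\rho_j}$ by a relative symmetric difference of order $k|n|/\rho_j$. Since $\Phi_n$ is bounded (by $\Lambda|n|+A$), this costs $O(\epsilon)$. Letting $\epsilon\to0$ slowly along $j$ gives \eqref{eq:intro-concrete-realization}. To handle all $n$ simultaneously, do this for a finite generating set and diagonalize. Translated balls are right Følner because the balls $B_\rho$ are, by polynomial growth with exact asymptotics, and left translation preserves this property.

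\textbf{Step 2: the structural properties of $\mathcal A(\Psi)$.} The cocycle identity
\[
\Phi_{nm}(x)=\Phi_n(x)+\Phi_m(xn)
\]
together with right-invariance of $\mathsf A$ shows that $n\mapsto\mathsf A[\Phi_n]$ is a homomorphism $N\to V_M$. It is therefore trivial on torsion and on commutators and extends linearly to $V_N$.

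Compactness and convexity follow because the set of right-invariant means is weak-$*$ compact and convex, and $\mathsf A\mapsto L_{\mathsf A}$ is affine and continuous.

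If $\Psi'$ is at bounded distance from $\Psi$, then $\Phi'_n-\Phi_n$ is a bounded coboundary-type error. Evaluating the mean on $\Phi_{n^k}$ and dividing by $k$ kills it, so $L_{\mathsf A}$ is unchanged.

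\textbf{Step 3: the two inclusions in \eqref{eq:intro-sandwich}.} For the left inclusion, take a weak-$*$ limit of the ball averages from Step 1. This is a right-invariant mean $\mathsf A$ with $L_{\mathsf A}=D^1_p\Psi_\infty$, so $\mathscr D_1(\Psi)\subseteq\mathcal A(\Psi)$, and closed convexity gives $\overline{\operatorname{conv}}\mathscr D_1(\Psi)\subseteq\mathcal A(\Psi)$.

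For the right inclusion, write $L\pi(n)=k^{-1}\mathsf A[\Phi_{n^k}]$. Then
\[
\ell_{S_M}\bigl(\Phi_{n^k}(x)\bigr)\le \Lambda|n^k|_{S_N}+A,
\]
because the abelianization map is $1$-Lipschitz for the stable norm. Dividing by $k$ and using $|n^k|/k\to\ell_{S_N}(\pi n)$ gives the bound. Extend from the lattice $\pi_N(N)$ by homogeneity and density.

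\textbf{Step 4: the singleton case.} If $\mathcal A(\Psi)=\{L\}$, then every blow-down has $D^1_p\Psi_\infty=L$ almost everywhere. A graded homomorphism is determined by its first layer, so the Pansu differential is a fixed map $D$ almost everywhere. The bi-Lipschitz map then coincides with the homomorphism $D$: integrate along horizontal curves and use the normalization $\Psi_\infty(e)=e$. Since $\Psi_\infty$ is bi-Lipschitz, $D$ is an isomorphism, and its Lipschitz constants are at most $\Lambda$.

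The main obstacle is Step 1. The required uniformity, choosing $\delta_j$, $\epsilon_j$ and the centers $c_j$ compatibly, and showing the Følner error is negligible against the differentiability error, is the delicate diagonal argument, and I would do it carefully with explicit error terms.
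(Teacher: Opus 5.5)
Your proposal is correct and is essentially the paper's proof. The paper fixes $t,R$, sets $k_j=\lceil tr_j\rceil$ and $\rho_j=\lceil Rtr_j\rceil$ (your $\epsilon=1/R$, $\delta=Rt$), telescopes $\Phi_{n^{k_j}}$ over $x_jB_{\rho_j}$ at a F{\o}lner cost of $O(|n|\,\|\Phi_n\|_\infty/R)$, and then diagonalizes, which is what your ``$\delta_j,\epsilon_j\to0$ slowly'' amounts to; your Steps 2--4 match its weak-$*$ compactness argument, its bound via powers $n^k$, and its constant-Pansu-differential lemma. Two small points:

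\begin{itemize}
\item Pansu differentiability gives an error $o(\delta)$ on the $\delta$-ball, not $o(\delta)\cdot\delta$. After dividing by the increment length $\epsilon\delta$ this becomes $o(1)/\epsilon$ (the paper's $2\eta(t,4R)$), and that is exactly what forces $\epsilon_j$ to tend to zero only slowly.
\item If you work only with a finite generating set, passing to all $n$ needs the fact that the ball averages are asymptotically additive in $n$. This follows from the cocycle identity and the F{\o}lner property.
\end{itemize}
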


Here $\rho_j\to\infty$ makes $(c_jB_{\rho_j})$ a F{\o}lner sequence, while $\rho_j/r_j\to0$ makes them collapse to $p$ in the cone. 

Allowing the basepoint to vary gives the following description of $\mathcal A(\Psi)$. For $c\in N$, set
$$
 \Psi^c(x):=\Psi(c)^{-1}\Psi(cx).
$$
A blow-down with varying basepoints is a cone limit of maps $\Psi^{c_j}$ at scales tending to infinity. Let $\mathscr G_{\mathrm{pt}}(\Psi)$ consist of the linear maps $L:V_N\to V_M$ for which the graded homomorphism $\widehat L$ itself occurs as a blow-down with varying basepoints.

\begin{mainthm}\label{thm:main-pointed}
The set $\mathscr G_{\mathrm{pt}}(\Psi)$ is compact, and
\begin{equation}\label{eq:intro-pointed-convexification}
 \mathcal A(\Psi)=\operatorname{conv}\mathscr G_{\mathrm{pt}}(\Psi),
 \qquad
 \operatorname{Ext}\mathcal A(\Psi)
 \subseteq\mathscr G_{\mathrm{pt}}(\Psi).
\end{equation}
Consequently every extreme point of $\mathcal A(\Psi)$ extends to a graded $\Lambda$-bi-Lipschitz isomorphism. Finally, $\mathcal A(\Psi)=\{L\}$ if and only if every blow-down of $\Psi$ with varying basepoints is the same graded map $\widehat L$.
\end{mainthm}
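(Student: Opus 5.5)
We prove the inclusion $\mathcal A(\Psi)\subseteq\operatorname{conv}\mathscr G_{\mathrm{pt}}(\Psi)$ through extreme points, and the reverse inclusion by realizing pointed blow-downs with invariant means, in the spirit of \cref{thm:main-attainment}.

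\emph{Compactness of $\mathscr G_{\mathrm{pt}}(\Psi)$.} The maps $\Psi^{c}$ are normalized $(\Lambda,A)$-quasi-isometries with the same constants as $\Psi$. Their rescalings are therefore uniformly coarsely Lipschitz. We pass to a diagonal subsequence of basepoints $c_j$ and scales $r_j$, using Arzel\`a--Ascoli on the cones. This shows that a limit of maps $\widehat{L_k}$, each a pointed blow-down, is again a pointed blow-down. Since $\widehat L$ depends continuously on $L$, the set is closed. It is bounded by the Lipschitz bound in \eqref{eq:intro-sandwich}.

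\emph{$\mathscr G_{\mathrm{pt}}\subseteq\mathcal A$.} Suppose $\Psi^{c_j}$ rescaled at $r_j$ converges to $\widehat L$. Then $\Psi^{c_j}$ agrees, to within $o(r_j)$ uniformly on $B_{Rr_j}$, with the homomorphism $\widehat L$ read at scale $r_j$. We choose $\rho_j\to\infty$ with $\rho_j/r_j\to0$, exactly as in the proof of \cref{thm:main-attainment}. Averaging $\Phi_n$ over $c_jB_{\rho_j}$ then telescopes: the sum over a ball of displacements $\pi_M(\Psi(x)^{-1}\Psi(xn))$ is controlled by boundary terms plus the linear part. This yields $\frac1{|B_{\rho_j}|}\sum\Phi_n\to L\pi_N(n)$. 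Any weak-* limit of these normalized counting measures is a right-invariant mean $\mathsf A$, because the sets form a right F{\o}lner sequence, and $L_{\mathsf A}=L$. Since $\mathcal A(\Psi)$ is convex, we obtain $\operatorname{conv}\mathscr G_{\mathrm{pt}}\subseteq\mathcal A$.

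\emph{$\operatorname{Ext}\mathcal A\subseteq\mathscr G_{\mathrm{pt}}$.} This is the main step. The set $\mathcal A(\Psi)$ is the image of the weak-* compact convex set of right-invariant means under the affine continuous map $\mathsf A\mapsto L_{\mathsf A}$. Hence every extreme point $L$ is the image of an extreme invariant mean. Better, $L$ is the image of any invariant mean in the face $F_L$, the preimage of $L$, which is compact and convex, and we may take an extreme point of $F_L$, hence an extreme (ergodic) invariant mean. The Lipschitz bound on $\Phi$ lets us view $x\mapsto(\Phi_n(x))_n$ as a point of a compact $N$-space $X$, the orbit closure of the displacement cocycle. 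Invariant means then push forward to invariant probability measures on $X$, and ergodic measures have generic points. Concretely, we find a right F{\o}lner sequence of translated balls $c_jB_{\rho_j}$ along which the averages of $\Phi_n$, and of all its translates $\Phi_n(\cdot\, m)$ for $|m|\le Rr_j$, converge to $L\pi_N(n)$ uniformly. To get this we would use a Lindenstrauss-type pointwise ergodic theorem, or more cheaply a Hahn--Banach/Krein--Milman argument: if the averages over most sub-balls of $B_{r_j}$ did not approach $L$, then $L$ would decompose as a nontrivial convex combination of two elements of $\mathcal A$.

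Once the averages are uniformly close to $L$ on most sub-balls at scale $\rho_j\ll r_j$, summing displacements along horizontal paths shows that $\Psi^{c_j}$ moves first-layer coordinates by $L$ up to $o(r_j)$. The blow-down at $c_j$ then has Pansu differential with first layer $L$ everywhere, so it is the graded map $\widehat L$. We expect the main obstacle to be this uniform-on-most-sub-balls upgrade, which converts extremality into a pointwise statement about a single basepoint sequence.

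\emph{Conclusions.} By Krein--Milman, $\mathcal A=\operatorname{conv}\operatorname{Ext}\mathcal A\subseteq\operatorname{conv}\mathscr G_{\mathrm{pt}}$. The convex hull of a compact set in finite dimensions is already closed, so we get equality without closure. Each element of $\mathscr G_{\mathrm{pt}}$ is a blow-down, hence bi-Lipschitz with constant $\Lambda$, and this gives the extension statement. For the final claim: if $\mathcal A=\{L\}$, then every pointed blow-down $\Psi_\infty'$ has all its Pansu differentials, which are themselves pointed blow-downs, equal to $\widehat L$. Integrating along horizontal curves gives $\Psi_\infty'=\widehat L$. The converse direction is immediate from $\mathcal A=\operatorname{conv}\mathscr G_{\mathrm{pt}}$.
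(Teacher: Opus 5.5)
You match the paper on closedness of $\mathscr G_{\mathrm{pt}}(\Psi)$ by a diagonal argument, on $\mathscr G_{\mathrm{pt}}(\Psi)\subseteq\mathcal A(\Psi)$ by averaging over translated balls (Lemma~\ref{lem:pointed-realization}), on Krein--Milman, and on the singleton criterion via re-tangents (Lemma~\ref{lem:retangent-pointed}) and Lemma~\ref{lem:constant-pansu-differential}. One small fix in the second step: your telescoping estimate divides the $o(r_j)$ approximation error by $\rho_j$, so $\rho_j=o(r_j)$ at an arbitrary rate fails. Take $\rho_j\asymp r_j$, which is legitimate because $\widehat L$ is exactly linear on first layers, or let $\rho_j/r_j\to0$ slowly as in Lemma~\ref{lem:mesoscopic-approximation}.

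You depart from the paper in proving $\operatorname{Ext}\mathcal A(\Psi)\subseteq\mathscr G_{\mathrm{pt}}(\Psi)$, and there the argument has a gap where you expected it. Grant that most sub-balls $c_jmB_{\rho_j}$, $m\in B_{Rr_j}$, carry averages of $\Phi_s$ close to $L\pi_N(s)$. These must be left translates, i.e.\ averages of $\Phi_s(c_jm\,\cdot\,)$. The right translates $c_jB_{\rho_j}m$ behind your $\Phi_n(\cdot\,m)$ are not balls, and neither of your tools gives uniformity over all $m$. Even so, ``summing displacements along horizontal paths'' does not yield $\pi_M\circ\Psi^{c_j}=L\circ\pi_N+o(r_j)$. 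A sub-ball average of $\Phi_s$ averages over many parallel $s$-lines. It says nothing about $\sum_{i<k}\Phi_s(c_jxs^i)$ along a single line.

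The missing piece is a discrete-to-continuum link, and one way to supply it is as follows.
\begin{itemize}
\item Average the path sums over starting points $x\in U_j:=(\sigma^N_{r_j})^{-1}(U)$, for a small ball $U$ in the cone. This turns them into $\sum_{i<k}$ of averages of $\Phi_s(c_j\,\cdot\,)$ over the regions $U_js^i$.
\item Up to a boundary layer of width $\rho_j$, each such average is an average of sub-ball averages (double count using $B_\rho^{-1}=B_\rho$), hence close to $L\pi_N(s)$.
\item In the limit this gives $\mu_N(U)^{-1}\int_U\bigl(u(p\exp tv)-u(p)\bigr)\,d\mu_N(p)=tLv$ for $u=\pi_M^\infty\circ F$. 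Continuity of $u$ makes the identity pointwise.
\item Lipschitz curves in $M_\infty$ are horizontal and determined by their first-layer projection and initial point. So $F(p\exp tv)=F(p)\exp(tLv)$, and therefore $F=\widehat L$.
\end{itemize}
This step plays the role of Lemma~\ref{lem:discrete-weak-derivatives}.

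Your ``cheap'' Krein--Milman variant also needs an input you do not supply: that the averages over the big balls $c_jB_{Rr_j}$ themselves tend to $L$. For an arbitrary extreme point this must come from one of two sources.
\begin{itemize}
\item A generic point of an ergodic measure in the fibre, which gives it for every $R$.
\item Lemma~\ref{lem:translated-ball-support} for exposed $L$, at a single scale, followed by density of exposed points among extreme points.
\end{itemize}
Given that input, your barycentre argument is correct. Sub-ball averages at scale $\rho_j\to\infty$ lie within $o(1)$ of $\mathcal A(\Psi)$ uniformly in the centre. Their empirical distribution has barycentre tending to $L$. And $\delta_L$ is the only probability measure on $\mathcal A(\Psi)$ with barycentre the extreme point $L$.

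The paper avoids this mesoscopic concentration step altogether.
\begin{itemize}
\item For exposed $L$, Lemma~\ref{lem:translated-ball-support} gives $J_{R_j}(c_j)\to L$.
\item Lemma~\ref{lem:discrete-weak-derivatives} identifies this limit with $\mu_N(\mathbb B)^{-1}\int_{\mathbb B}D^1_pF\,d\mu_N$.
\item Lemma~\ref{lem:pointed-realization} puts every $D^1_pF$ in $\mathcal A(\Psi)$. So extremality, applied in the cone to Pansu differentials rather than to discrete averages, forces $D^1_pF=L$ almost everywhere on $\mathbb B$.
\item A single re-tangent at one such $p$ then gives $L\in\mathscr G_{\mathrm{pt}}(\Psi)$.
\end{itemize}
Once repaired, your generic-point route proves more: the blow-down at the chosen basepoints is $\widehat L$ itself, and this step needs no Rademacher theorem. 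The price is a pointwise ergodic theorem along balls plus the sub-ball concentration argument.
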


Remark \ref{rem:moving-basepoints-essential} illustrates the distinction between fixed and varying basepoints. The extreme points are also constrained by the full cohomology ring.

\begin{mainthm}\label{thm:main-extreme-cohomology}
For every $L\in\operatorname{Ext}\mathcal A(\Psi)$, there is an isomorphism of graded unital real algebras
$$
 \mathcal I_L:H^*(M;\RR)\longrightarrow H^*(N;\RR)
$$
whose restriction to $H^1(M;\RR)=V_M^*$ is $L^*:V_M^*\to V_N^*$.
\end{mainthm}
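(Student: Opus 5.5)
By \cref{thm:main-pointed}, every $L\in\operatorname{Ext}\mathcal A(\Psi)$ lies in $\mathscr G_{\mathrm{pt}}(\Psi)$. So the graded homomorphism $\widehat L:N_\infty\to M_\infty$ occurs as a blow-down with varying basepoints, and it is a graded $\Lambda$-bi-Lipschitz isomorphism of Carnot groups. The plan is to turn this isomorphism of asymptotic cones into an isomorphism of real cohomology rings of the lattices. The link between the two is Nomizu's theorem together with the associated graded of the Malcev filtration. Specifically, $H^*(N;\RR)\cong H^*(\mathfrak n)$, where $\mathfrak n$ is the Malcev Lie algebra of $N\otimes\RR$. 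Likewise $H^*(M;\RR)\cong H^*(\mathfrak m)$. The cones $N_\infty,M_\infty$ are the simply connected groups of the associated graded algebras $\operatorname{gr}\mathfrak n$ and $\operatorname{gr}\mathfrak m$. The first layers $V_N=\mathfrak n/[\mathfrak n,\mathfrak n]$ and $V_M$ are canonically the degree-one parts of both.

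The difficulty is that $\mathfrak n$ need not be isomorphic to $\operatorname{gr}\mathfrak n$. An isomorphism $\operatorname{gr}\mathfrak n\to\operatorname{gr}\mathfrak m$ therefore does not directly induce a map $H^*(\mathfrak m)\to H^*(\mathfrak n)$. I would handle this with a second use of extremality. The first step is to show that the cone limit realizing $\widehat L$ can be refined so that $\widehat L$ arises from an actual Lie algebra isomorphism $\phi:\mathfrak n\to\mathfrak m$ with $\operatorname{gr}\phi=\widehat L$.

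The mechanism I have in mind is as follows. Since $L$ is extreme, the means $\mathsf A$ realizing $L$ form a face, which is a weak-$*$ closed convex set of right-invariant means. For such means the displacement cocycle $n\mapsto\Phi_n$ is $\mathsf A$-almost a homomorphism into $V_M$. One then lifts this step by step up the lower central series. At step $k$, average the $M$-valued displacements $\Psi(x)^{-1}\Psi(xn)$ modulo $\gamma_{k+1}M$. Extremality should force these averages to be multiplicative modulo $\gamma_{k+1}M\otimes\RR$, because otherwise the defect would split $L$ as a nontrivial midpoint of two elements of $\mathcal A(\Psi)$. The result is a compatible tower of homomorphisms $N\to (M/\gamma_{k+1}M)\otimes\RR$. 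Their limit is a homomorphism $N\to M\otimes\RR$ inducing $L$ on abelianizations and $\widehat L$ on graded pieces. Bi-Lipschitz control from $\widehat L$ gives injectivity of the associated graded map, which upgrades the homomorphism to an isomorphism of Malcev completions. Setting $\mathcal I_L:=\phi^*$ on Chevalley--Eilenberg cohomology, composed with the Nomizu isomorphisms, yields a graded unital algebra isomorphism. Its degree-one part is $\phi^*|_{(\mathfrak m/[\mathfrak m,\mathfrak m])^*}=L^*$.

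If the lifting argument fails, I would fall back on a spectral-sequence argument. The filtration by weights gives a spectral sequence from $H^*(\operatorname{gr}\mathfrak n)$ converging to $H^*(\mathfrak n)$. The $d_r$ differentials are determined by the higher-order terms of the bracket, and one would try to show that $\widehat L^*$ intertwines them, using the fact that the pointed blow-down sees those terms at finite scale. This route only produces an isomorphism of associated graded rings, though. So the lifting through the lower central series, where extremality must rule out nonzero higher-step defects, is the main obstacle. I expect the key estimate there to be that the quasi-isometry constant $A$ contributes only $o(r^k)$ to degree-$k$ displacements over Følner sets. That is what lets the degree-$k$ defect be averaged away.
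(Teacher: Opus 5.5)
\textbf{There is a genuine gap in your central step.} You want every $L\in\operatorname{Ext}\mathcal A(\Psi)$ to be the first layer of a Lie algebra isomorphism $\phi:\mathfrak n\to\mathfrak m$ with $\operatorname{gr}\phi=\widehat L$. The set $\mathcal A(\Psi)$ is nonempty, compact and convex, so it always has extreme points. Your step would therefore show that any two quasi-isometric finitely generated torsion-free nilpotent groups have isomorphic real Malcev completions. That is a well-known open problem, and Shalom's and Sauer's cohomological invariance results are only partial evidence toward it. So the step cannot be routine.

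The mechanism you propose also breaks down concretely.
\begin{itemize}
\item Invariant means act on bounded real-valued functions. The cocycle identity $\omega(n_1n_2,x)=\omega(n_1,x)\,\omega(n_2,xn_1)$ is multiplicative in the nonabelian group $M/\gamma_{k+1}M$ once $k\ge2$.
\item Averaging Malcev coordinates of $\omega(n,\cdot)$ therefore does not give a homomorphism. The Baker--Campbell--Hausdorff cross terms are bilinear in $\omega(n_1,x)$ and $\omega(n_2,xn_1)$, and the mean of such a term is not the bracket of the means.
\item Extremality cannot absorb this defect. The set $\mathcal A(\Psi)\subseteq\Hom(V_N,V_M)$ records only first-layer data, so a higher-layer defect gives no way to write $L$ as a midpoint of two points of $\mathcal A(\Psi)$.
\end{itemize}

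Your spectral-sequence fallback fails for a related reason. A blow-down is produced exactly by letting the rescaled products degenerate to the graded product. So the graded map $\widehat L$ carries none of the non-graded structure that separates $H^*(\mathfrak n)$ from $H^*(\operatorname{gr}\mathfrak n)$. Section~\ref{sec:metric-gap} makes this explicit: $P$ is a graded isometry between the two cones, yet $P^*$ does not extend to a ring automorphism of $H^*(\mathfrak h)$. Any valid argument must therefore use finite-scale information from the approximating maps, and your sketch does not supply any.

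\textbf{The missing idea is to keep the information at scale one and use measure equivalence instead of Lie theory.} This is the paper's route.
\begin{itemize}
\item Let $\mathscr X_\Psi$ be the closure in $M^N$ of the normalized translates $\Psi^c$, with the right $N$-action $(f\cdot n)(x)=f(n)^{-1}f(nx)$.
\item Each $N$-invariant probability measure $\mu$ on $\mathscr X_\Psi$ gives a linear map $L_\mu$ by $L_\mu\pi_N(n)=\int\pi_M(f(n))\,d\mu(f)$. The image of $\mu\mapsto L_\mu$ is exactly $\mathcal A(\Psi)$.
\item Because $L$ is extreme, the fiber $\{\mu:L_\mu=L\}$ is a face of the compact convex set of invariant measures. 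Hence an extreme point $\mu$ of that fiber is ergodic.
\item Then $(\mathscr X_\Psi\times M,\mu\times\text{counting})$ is an ergodic uniform measure-equivalence coupling of $M$ and $N$. Gotfredsen--Kyed's cohomological induction then yields a graded unital ring isomorphism $H^*(M;\RR)\to H^*(N;\RR)$.
\item In degree one this isomorphism sends $\chi\in V_M^*$ to the homomorphism $n\mapsto\int\chi(\pi_M(f(n)))\,d\mu(f)=\chi(L\pi_N(n))$, which is $L^*\chi$.
\end{itemize}
This argument never uses $L\in\mathscr G_{\mathrm{pt}}(\Psi)$.
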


This prescribed compatibility yields the following quantitative obstruction.

\begin{mainthm}\label{thm:main-multiplicative-gap}
There exist a torsion-free nilpotent group $\Gamma$, two finite symmetric generating sets $S,T$, and $\varepsilon>0$ such that the asymptotic cones of $(\Gamma,d_S)$ and $(\Gamma,d_T)$ are isometric, but every $(\Lambda,A)$-quasi-isometry between them satisfies
\begin{equation}\label{eq:uniform-multiplicative-gap}
 \Lambda\ge1+\varepsilon.
\end{equation}
\end{mainthm}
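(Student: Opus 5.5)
The plan is to combine \cref{thm:main-pointed} and \cref{thm:main-extreme-cohomology} with a rigidity computation on a specific group $\Gamma$. We will choose $\Gamma$ of dimension seven whose real cohomology ring has a small automorphism group, so that the degree-one parts of graded algebra automorphisms of $H^*(\Gamma;\RR)$ form a proper closed subgroup $G_{\mathrm{coh}}\subsetneq GL(V_\Gamma)$. At the same time, the automorphisms of the graded Lie algebra $\mathrm{gr}(\mathfrak g)$ should have first-layer restrictions forming a strictly larger group $G_{\mathrm{gr}}$. The intended source of $\Gamma$ is a nilpotent Lie algebra that is not isomorphic to its associated graded (non-Carnot), so that its Malcev cohomology differs from that of the graded algebra. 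Then we pick two generating sets $S,T$ whose induced norms $\ell_S,\ell_T$ on $V_\Gamma$ satisfy two conditions. First, $\ell_T=\ell_S\circ L_0$ for some $L_0\in G_{\mathrm{gr}}$, so that $\widehat{L_0}$ is an isometry of the Carnot cones and the cones are isometric. Second, no element of $G_{\mathrm{coh}}$ is an isometry from $\ell_S$ to $\ell_T$. Concretely, we would take polygonal unit balls (for example, symmetrized generating sets projecting to a non-symmetric polygon), chosen so that the isometry group between the two norms is a finite set disjoint from $G_{\mathrm{coh}}$.

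With these choices, the gap argument runs as follows. Let $\Psi$ be a $(\Lambda,A)$-quasi-isometry from $(\Gamma,d_S)$ to $(\Gamma,d_T)$, normalized. The set $\mathcal A(\Psi)$ is nonempty and compact convex, so it has an extreme point $L$. By \cref{thm:main-pointed}, $\widehat L$ is a graded $\Lambda$-bi-Lipschitz isomorphism, so $\Lambda^{-1}\ell_S(v)\le \ell_T(Lv)\le\Lambda\ell_S(v)$. By \cref{thm:main-extreme-cohomology}, $L^*$ extends to an algebra isomorphism $H^*(\Gamma)\to H^*(\Gamma)$, so $L\in G_{\mathrm{coh}}$. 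Define
$$
\delta(L):=\max\Bigl(\sup_{v\neq0}\frac{\ell_T(Lv)}{\ell_S(v)},\ \sup_{v\ne0}\frac{\ell_S(v)}{\ell_T(Lv)}\Bigr),
$$
so that $\Lambda\ge\delta(L)$. It then suffices to show $\inf_{L\in G_{\mathrm{coh}}}\delta(L)>1$. The set $\{L\in G_{\mathrm{coh}}:\delta(L)\le2\}$ is compact, because $G_{\mathrm{coh}}$ is closed, being defined by polynomial identities, and the norms are bounded. On this compact set $\delta$ is continuous and never equals $1$ by the second choice above. Its minimum is therefore $1+\varepsilon$ with $\varepsilon>0$, and $\varepsilon$ is independent of $\Psi$ and $A$.

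The main obstacle is the explicit algebra. We must exhibit a seven-dimensional rational nilpotent Lie algebra and compute two things: the first-layer image of $\mathrm{Aut}(H^*)$, which involves degree-two and degree-three cup-product relations computed via Nomizu's theorem from the Chevalley--Eilenberg complex, and $\mathrm{Aut}(\mathrm{gr}\,\mathfrak g)|_{V}$. We then need to certify a strict inclusion $G_{\mathrm{coh}}\subsetneq G_{\mathrm{gr}}$ witnessed by some $L_0$.

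My first attempt would be a filiform-type algebra in dimension seven. For these, graded automorphisms of the Carnot model act on the two-dimensional first layer by a group containing independent diagonal scalings. The non-graded structure constants then force cohomological relations, for instance that a specific product of degree-one classes with a degree-two class vanishes. These relations should cut $G_{\mathrm{coh}}$ down to a group preserving a particular line or ratio. Once that is established, choose $L_0$ a scaling outside this ratio, let $S$ project to a generic hexagon, and let $T$ be obtained by applying $L_0$. Here "generic" means the hexagon has no linear symmetries beyond $\pm1$, up to lattice-realizability after taking multiples. We must also check that $S$ and $T$ can both be realized as generating sets of the same lattice, which holds if $L_0$ is rational and the vertices are chosen in $L_0^{-1}(\Gamma_{\mathrm{ab}}\otimes\mathbb Q)$, with scaling to integers.
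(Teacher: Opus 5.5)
Your reduction is sound and follows the same architecture as the paper: a polytope with linear stabilizer $\{\pm I\}$, a graded automorphism carrying one unit ball onto the other (so the cones are isometric), cohomological admissibility of extreme points of $\mathcal A(\Psi)$, and compactness.

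The one structural difference is that you apply the bi-Lipschitz conclusion of Theorem~\ref{thm:main-pointed} directly to an extreme point, so your admissible set $G_{\mathrm{coh}}$ need not be convex. The paper instead extracts a \emph{linear} necessary condition, $c(LX_6)=0$. It propagates this to all of $\mathcal A(\Psi)$ by Krein--Milman and applies it to a fixed-basepoint Pansu differential.

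Your version needs $G_{\mathrm{coh}}$ to be closed. ``Defined by polynomial identities'' is not the right reason, because $G_{\mathrm{coh}}$ is the image of $\operatorname{Aut}(H^*)$ under restriction to $H^1$, i.e.\ a projection. It is in fact closed: the image of the real points of a homomorphism of real algebraic groups is an open, hence closed, subgroup of the real points of the image. Alternatively, you can replace $G_{\mathrm{coh}}$ by any closed necessary condition, as the paper does.

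The genuine gap is that the statement is existential and you never produce the example. Everything beyond Theorems~\ref{thm:main-attainment}--\ref{thm:main-extreme-cohomology} lies in exhibiting a rational $\mathfrak g$ and a rational graded automorphism $L_0$ of $\operatorname{gr}\mathfrak g$, together with a \emph{proof} that $L_0\operatorname{Stab}(B_S)\cap G_{\mathrm{coh}}=\emptyset$. You do not need, and have not shown, $G_{\mathrm{coh}}\subseteq G_{\mathrm{gr}}$.

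Your filiform suggestion is only a hope (``should cut $G_{\mathrm{coh}}$ down''), and the heuristic behind it conflates two different groups. A relation between the diagonal scalings forced by non-graded structure constants constrains \emph{Lie algebra} automorphisms of $\mathfrak g$. But $G_{\mathrm{coh}}$ consists of degree-one parts of \emph{ring} automorphisms of $H^*(\mathfrak g)$, which may act by arbitrary invertible maps in each degree $\ge2$, subject only to the cup product.

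To obstruct $L_0^*$ you need an invariant of the ring that $L_0^*$ fails to preserve. Take a rank-type invariant of $\alpha\mapsto(\alpha\wedge\cdot)$ on $\mathbb P(H^1)$. A diagonal $L_0$ fixes the two eigenlines of $L_0^*$, so such an invariant can only detect $L_0^*$ if it degenerates at some other point. You have not shown that this happens for any filiform algebra.

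The paper supplies exactly this missing lemma for $(0,0,0,12,14+23,0,15-34+36)$. Its graded algebra $(0,0,0,12,14,0,15)$ has $X_3,X_6$ central, so the shear $P:X_6\mapsto X_6+X_3$ is graded. An explicit computation of $H^2$ and $H^3$ shows $\operatorname{rank}(c\wedge\cdot:H^2\to H^3)=1$, but $\operatorname{rank}(\alpha\wedge\cdot)\ge2$ whenever $\alpha$ has nonzero $f$-coefficient. This forces $c(LX_6)=0$ for every admissible $L$, which $\pm P$ violate. Without an analogous computation for a specific group, your proposal reduces the theorem to an unproved claim rather than proving it.
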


Shalom proved the quasi-isometry invariance of the Betti numbers of finitely generated nilpotent groups \cite[Theorem 1.2]{Shalom2004}. Sauer proved the invariance of their real cohomology rings \cite{Sauer2006}, and Gotfredsen--Kyed extended this to connected simply connected nilpotent Lie groups \cite{GotfredsenKyed21}. Antonelli--Young obtain an isomorphism of cohomology algebras by averaging the pullback under an ergodic limit of translates of a quasi-isometry \cite[Theorem 1.1 and Proposition 1.2]{AntonelliYoung25}. Their construction uses the same normalized translation action on an orbit closure and selects an ergodic invariant measure by extremality \cite[Section 3 and Lemma 3.3]{AntonelliYoung25}. Here we choose an ergodic invariant measure in the fiber over each prescribed extreme point of $\mathcal A(\Psi)$ and apply Gotfredsen--Kyed's induction to obtain the required degree-one part. 

To construct $\Gamma$ as in \Cref{thm:main-multiplicative-gap}, we use the rational Lie algebra
$$
 (0,0,0,12,14+23,0,15-34+36).
$$
Its associated graded algebra admits a graded automorphism $P$, while $P^*$ cannot be the degree-one part of a cohomology-ring automorphism. Theorems \ref{thm:main-attainment} and \ref{thm:main-extreme-cohomology}, together with compactness, give the uniform gap. Breuillard--Le Donne previously obtained two word metrics on one nilpotent group with isometric cones but no $(1,A)$-quasi-isometry; in their example the infimum of the possible multiplicative constants is nevertheless one \cite{BreuillardLeDonne2013}. Here the infimum is strictly larger than one. Positive multiplicative gaps were previously known in negatively curved settings \cite{LafontSchmidtVanLimbeek17}.

\emph{Organization.} Sections \ref{sec:preliminaries} and \ref{sec:transport-body} fix notation and construct $\mathcal A(\Psi)$. Section \ref{sec:attainment} proves that first layer maps of Pansu differentials of every fixed-basepoint blowdown is realized by an invariant mean. Section \ref{sec:pointed-blowdowns} proves exact pointed convexification  and isolates bounded Abelian defect as a singleton criterion which is functorial. Sections \ref{sec:extreme-cohomology} and \ref{sec:metric-gap} prove the cohomological selection theorem and the multiplicative constant gap. Some auxiliary details are collected in Appendix \ref{app:cone-compactness}.

\section{Preliminaries}\label{sec:preliminaries}

Throughout, the groups $N$ and $M$ are finitely generated and non-trivial.

\subsection{Quasi-isometries} Let $X$ and $Y$ be metric spaces. By a $(\Lambda, A)$ quasi-isometry between $X$ and $Y$ we mean a map $\Psi:X\to Y$ such that for all $x,x'\in X$
$$
\Lambda^{-1}d(x,x')-A\le d(\Psi(x),\Psi(x'))\le\Lambda d(x,x')+A
$$
and is coarsely surjective, that is there exists $R>0$ such that for any $y\in Y$ there exists $x\in X$ with $d(\Psi(x),y)\le R$. For quasi-isometries between groups $N$ and $M$ we normalize $\Psi(e_N)=e_M$ via left-translating the map by $\Psi(e_N)^{-1}$. This preserves the above quasi-isometry constants.

\subsection{Asymptotic cones and their first layers}
Let $N$ be a finitely generated torsion-free nilpotent group with finite symmetric generating set $S_N$ and word metric $d_N$. Let $\widetilde N$ be the Malcev completion of $N$ with Lie algebra $\mathfrak n$ and exponential map $\exp_{\tilde N}$. Since $\mathfrak n$ is a nilpotent Lie algebra, there exists a smallest non-negative integer $s$ such that $\mathfrak n^{s+1}=\{0\}$, where we define $\mathfrak n^1:=\mathfrak n$ and $\mathfrak n^{i+1}:=[\mathfrak n, \mathfrak n^{i}]$ for $i\ge 1$. We can choose a vector-space decomposition
$$
\mathfrak n=W_1\oplus\cdots\oplus W_s
$$
so that $\mathfrak n^i=W_i\oplus\cdots\oplus W_s$. Put
$$
V_i^N:=\mathfrak n^i/\mathfrak n^{i+1},
\qquad
q_i:W_i\longrightarrow V_i^N
$$
for the quotient isomorphisms. Thus $\mathfrak n_\infty:=\bigoplus_{i=1}^sV_i^N$ is the associated graded Lie algebra, and its simply connected Lie group is the asymptotic cone $N_\infty$ of $N$. Its Carnot dilations are
$$
\delta_t\left(\exp_\infty(Z_1+\cdots+Z_s)\right)
:=\exp_\infty(tZ_1+t^2Z_2+\cdots+t^sZ_s),
\qquad Z_i\in V_i^N,\qquad t>0,
$$
where $\exp_\infty$ denotes the exponential map of $N_\infty$. We shall write $d_\infty$ when the group is clear from context. The first layer $V_1^N$ can be identified with
$$
V_N=N_{\mathrm{ab}}\otimes_{\mathbb Z}\mathbb R.
$$
Indeed, let $q:\mathfrak n\to V_1^N$ be the quotient map. The Baker--Campbell--Hausdorff formula gives
$$
q(\log_{\widetilde N}(xy))
=q(\log_{\widetilde N}x)+q(\log_{\widetilde N}y),
$$
since every remaining term is a commutator. Thus $n\mapsto q(\log_{\widetilde N}n)$ induces a linear map
$$
T:V_N\longrightarrow V_1^N,
\qquad
T\bigl((n[N,N])\otimes t\bigr)=t\,q(\log_{\widetilde N}n).
$$

To prove that $T$ is an isomorphism, let $\chi:N\to\mathbb R$ be any homomorphism. Malcev's extension theorem gives a unique Lie group homomorphism $\widetilde\chi:\widetilde N\to\mathbb R$ extending $\chi$; see \cite[Theorem 2.11]{Raghunathan72}. Its differential at the identity annihilates $[\mathfrak n,\mathfrak n]$, so it induces a linear functional $\lambda\in(V_1^N)^*$ satisfying
$$
\chi(n)=\widetilde\chi(n)
=\lambda\bigl(q(\log_{\widetilde N}n)\bigr).
$$
Conversely, every $\lambda\in(V_1^N)^*$ defines a Lie group homomorphism $\lambda\circ q\circ\log_{\widetilde N}$, and uniqueness in Malcev's theorem makes the functional extending a given $\chi$ unique. Homomorphisms $N\to\mathbb R$ are exactly the linear functionals on $V_N$, so the dual map $T^*:(V_1^N)^*\to V_N^*$ is bijective. Both spaces are finite dimensional, hence $T$ is an isomorphism.

We write $\pi_N:N\to V_N$ for the canonical projection through the abelianization. The identification $\iota_N:\widetilde N\to N_\infty$ is the diffeomorphism defined by
$$
\iota_N\left(
 \exp_{\widetilde N}(X_1+\cdots+X_s)
\right)
:=
\exp_\infty\left(q_1(X_1)+\cdots+q_s(X_s)\right),
\qquad X_i\in W_i.
$$
It is only a coordinate identification and need not be a group homomorphism. Define the rescaling map 
$$
\sigma_r^N:=\delta_{1/r}\circ\iota_N,
$$
and we use the customary shorthand $\delta_{1/r}(x):=\sigma_r^N(x)$ for $x\in N$.

Define $\pi_N^\infty:N_\infty\longrightarrow V_N$ by
$$
\pi_N^\infty\left(
 \exp_\infty(Z_1+\cdots+Z_s)
\right):=Z_1,
\qquad Z_i\in V_i^N.
$$
Since every element of the form $[X,Y]$ lies in $\bigoplus_{i\ge2}V_i^N$, it is easy to see that $\pi_N^\infty$ is a group homomorphism. Moreover,
$$
\pi_N^\infty\circ\delta_t=t\pi_N^\infty.
$$
It agrees with $\pi_N$ in the sense that
$$
\pi_N^\infty(\iota_N(n))=\pi_N(n),
\qquad
\pi_N^\infty(\sigma_r^N(n))=\frac1r\pi_N(n)
\qquad(n\in N).
$$
For a second finitely generated torsion-free nilpotent group $M$ with finite symmetric generating set $S_M$ and word metric $d_M$, put $V_M:=M_{\mathrm{ab}}\otimes_{\mathbb Z}\mathbb R$ and use the analogous notation $\pi_M,M_\infty,d_{M,\infty},\iota_M,\sigma_r^M$ and $\pi_M^\infty:M_\infty\to V_M$.

We can associate to the pair $(N,S_N)$ a norm $\ell_{S_N}$ on $V_1^N$ whose unit ball is the convex hull of $\pi_N(S_N)$ (called the Pansu limit norm for the pair $(N,S_N)$). The sub-Finsler metric $d_\infty$ is defined by the norm $\ell_{S_N}$ on $V_N$ in the following way: if $\gamma$ is an absolutely continuous horizontal curve and
$$
u_\gamma(t):=
\bigl(L_{\gamma(t)^{-1}}\bigr)_*\dot\gamma(t)\in V_N
$$
is its left-translated horizontal velocity, then
$$
\operatorname{Length}_{d_\infty}(\gamma)
=\int_0^1\ell_{S_N}(u_\gamma(t))\,dt.
$$
For $x,y\in N_\infty$, $d_{N,\infty}(x,y)$ is defined to be the infimum of the lengths of all absolutely continuous horizontal curves connecting $x$ to $y$. The metric $d_{N,\infty}$ is left-invariant and homogeneous:
$$
d_{N,\infty}(\delta_t(x),\delta_t(y))=t\,d_{N,\infty}(x,y) \quad (x,y\in N_\infty, t>0),
$$
and is a proper geodesic metric. It can be verified that $\pi_N^\infty:(N_\infty,d_{N,\infty})\to(V_N,\ell_{S_N})$ is $1$-Lipschitz, and $d_{N,\infty}(e,\exp_\infty v)=\ell_{S_N}(v)$ for $v\in V_N$. Analogous statements also hold for $\pi_M^\infty$ and $d_{M,\infty}$.

We record the coordinate form of Pansu convergence used below. For $G\in\{N,M\}$ and for every $C>0$,
\begin{equation}\label{eq:coordinate-pansu-convergence}
\sup_{\substack{a,b\in G\\ |a|_{S_G},\,|b|_{S_G}\le Cr}}
\left|
\frac{d_G(a,b)}r
-d_{G,\infty}\bigl(\sigma_r^G(a),\sigma_r^G(b)\bigr)
\right|\to0
\text{ as }r\to\infty.
\end{equation}
Moreover, for every $R>0$,
\begin{equation}\label{eq:cone-density}
\sup_{z\in\overline {B_{G,\infty}(e,R)}}
d_{G,\infty}\bigl(z,\sigma_r^G(G)\bigr)
\to0
\text{ as }r\to\infty.
\end{equation}
These are the coordinate forms of Pansu's theorem \cite{Pansu83} (see also \cite[Theorems 1 and 2]{BreuillardLeDonne2013}).

We shall also use the properness estimate: for every $R>0$ there is $C_R>0$ such that, for $r\ge1$,
\begin{equation}\label{eq:cone-bounded-lift}
\sigma_r^G(a)\in\overline {B_{G,\infty}(e,R)}
\quad\Longrightarrow\quad
|a|_{S_G}\le C_Rr.
\end{equation}
The proofs of \eqref{eq:cone-density} and \eqref{eq:cone-bounded-lift} are recalled in Subsection \ref{app:cone-density-details}.

Finally, write $B_R:=\{n\in N:|n|_{S_N}\le R\}$. Then \cite{Pansu83} and \cite[Corollary 9]{BreuillardLeDonne2013} give $|B_R|=c_{\mathrm{vol}}R^{Q_N}(1+o(1))$, where $Q_N$ is the homogeneous dimension of $N_\infty$. Hence, the balls $B_R$ are F{\o}lner with 
\begin{equation}\label{eq:balls-Folner}
\bigl(|B_{R+k}|-|B_{R-k}|\bigr)/|B_R|\to0   
\end{equation}
whenever $k=o(R)$.

\section{The invariant-mean linearization set of a quasi-isometry}\label{sec:transport-body}

Let $N,M$ be finitely generated torsion-free nilpotent groups with the word metrics and notation fixed in Section \ref{sec:preliminaries}. Fix a normalized $(\Lambda,A)$-quasi-isometry $\Psi:N\to M$.

Let $\omega(n,x):=\Psi(x)^{-1}\Psi(xn)$ be the displacement cocycle and set
$$
 \Phi_n(x):=\pi_M(\omega(n,x))
 =\pi_M(\Psi(xn))-\pi_M(\Psi(x)),
 \qquad n,x\in N.
$$
We give $V_M$ the norm $\ell_{S_M}$ and bounded $V_M$-valued functions the induced supremum norm. Then $\|\Phi_n\|_\infty\le\Lambda|n|_{S_N}+A$ and
\begin{equation}\label{eq:cocycle-identity-abelianized-displacement-function}
\Phi_{n_1n_2}(x)=\Phi_{n_1}(x)+\Phi_{n_2}(xn_1).
\end{equation}

\subsection{Invariant means and localization}

A \emph{right-invariant mean} on $N$ is a positive normalized functional $\mathsf A$ on $\ell^\infty(N)$ with $\mathsf A[f(\cdot g)]=\mathsf A[f]$ for all $g\in N$, $f\in \ell^\infty(N)$. Such an invariant mean exists since $N$ is amenable. For bounded $f:N\to V$ into a finite-dimensional real normed space, $\mathsf A[f]$ is defined componentwise, and
\begin{equation}\label{eq:mean-in-hull}
\mathsf A[f]\in\overline{\mathrm{conv}}f(N).
\end{equation}  
Indeed, if $\mathsf A[f]$ were outside $C:=\overline{\mathrm{conv}}f(N)$, Hahn-Banach separation theorem would give $\beta\in V^*$ with $\beta(\mathsf A[f])>\sup_{z\in C}\beta(z)$. But $\beta(\mathsf A[f])=\mathsf A[\beta\circ f]\le\sup_N(\beta\circ f)$ by positivity and normalization, a contradiction.

It is easy to see that if there exists $w_1,\dots,w_n\in V$ such that 
$$
f(x)=\sum_{i=1}^n c_i(x)w_i \quad(x\in N, c_1,\dots,c_n\in\ell^\infty(N)),
$$ 
then $\mathsf A[f]=\sum_{i=1}^n \mathsf A[c_i]w_i$.

Because $\Phi$ satisfies the cocycle identity \eqref{eq:cocycle-identity-abelianized-displacement-function}, $n\mapsto\mathsf A[\Phi_n]$ is a homomorphism $N\to V_M$ (using right-invariance of $\mathsf A$). It kills torsion and commutators, hence factors through $\pi_N$ and extends uniquely to a linear map
$$
L_{\mathsf A}:V_N\to V_M,\qquad L_{\mathsf A}\pi_N(n)=\mathsf A[\Phi_n] \qquad(n\in N),
$$
which is the invariant-mean linearization associated with $\mathsf A$.

\begin{definition}[Invariant-mean linearizations]\label{def:transport-body}
The \emph{invariant-mean linearization set} of $\Psi$ is
$$
\mathcal A(\Psi):=\{L_{\mathsf A}:\mathsf A \text{ a right-invariant mean on } N\}
\subseteq\Hom(V_N,V_M).
$$
\end{definition}
In what follows, we record some properties of the set $\mathcal A(\Psi)$.

\begin{proposition}[Bounded-distance invariance]
\label{prop:bounded-distance-invariance}
If $\Psi':N\to M$ is at bounded distance from $\Psi$, then
$$
 \mathcal A(\Psi')=\mathcal A(\Psi).
$$
\end{proposition}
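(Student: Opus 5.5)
The plan is to show directly that for every right-invariant mean $\mathsf A$ the two linearizations coincide, $L'_{\mathsf A}=L_{\mathsf A}$. This gives $\mathcal A(\Psi')=\mathcal A(\Psi)$ at once, because both sets are indexed by the same family of means.

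First, normalization. Both $\Psi$ and $\Psi'$ are normalized, $\Psi(e_N)=\Psi'(e_N)=e_M$. Write $D:=\sup_{x\in N}d_M(\Psi(x),\Psi'(x))<\infty$. If bounded distance is only assumed before normalizing $\Psi'$, then renormalizing left-translates $\Psi'$ by the fixed element $\Psi'(e_N)^{-1}$. This does not change $\Phi'_n$: the displacement $\Psi'(x)^{-1}\Psi'(xn)$ is invariant under a common left translation. So we may work with the cocycle of $\Psi'$ as given.

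Second, express the difference of the displacement functions as a coboundary. Set
$$
h(x):=\pi_M(\Psi'(x))-\pi_M(\Psi(x))\in V_M .
$$
From the formula $\Phi_n(x)=\pi_M(\Psi(xn))-\pi_M(\Psi(x))$ we get
$$
\Phi'_n(x)-\Phi_n(x)=h(xn)-h(x).
$$

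Third, show $h$ is bounded. Write $\Psi'(x)=\Psi(x)g_x$ with $|g_x|_{S_M}\le D$. Since $\pi_M$ is a homomorphism,
$$
h(x)=\pi_M(g_x),\qquad \ell_{S_M}(h(x))\le D .
$$
The bound holds because $\pi_M(S_M)$ lies in the unit ball of $\ell_{S_M}$ and the norm satisfies the triangle inequality. Thus $h\in\ell^\infty(N;V_M)$.

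Fourth, apply the mean. The mean acts componentwise and is linear, so
$$
\mathsf A[\Phi'_n]-\mathsf A[\Phi_n]=\mathsf A[h(\cdot\, n)]-\mathsf A[h].
$$
By right-invariance applied to each coordinate of $h$, the right-hand side vanishes. Hence $L'_{\mathsf A}\pi_N(n)=L_{\mathsf A}\pi_N(n)$ for all $n\in N$. Since $\pi_N(N)$ spans $V_N$, we conclude $L'_{\mathsf A}=L_{\mathsf A}$.

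There is no real obstacle here. The only point needing care is that $h$ is genuinely bounded, so that the mean may be applied to it; this is exactly the bound established in the third step.
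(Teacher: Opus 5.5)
Your proof is correct and is essentially the paper's argument. The paper likewise writes $\Phi'_n-\Phi_n$ as the coboundary $e(\cdot\,n)-e$ of the bounded function $e=\pi_M\circ\Psi'-\pi_M\circ\Psi$ and notes that every right-invariant mean annihilates it; your remarks on normalization and on the bound $\ell_{S_M}(h(x))\le D$ fill in details the paper leaves implicit.
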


\begin{proof}
The map $e(x):=\pi_M(\Psi'(x))-\pi_M(\Psi(x))$ is bounded. If $\Phi'_n$ is the displacement function of $\Psi'$, then for $n,x\in N$,
$$
 \Phi'_n(x)-\Phi_n(x)=e(xn)-e(x), \text{ i.e. } \Phi'_n-\Phi_n=n.e-e.
$$
Every right-invariant mean annihilates the term on the right, so it gives the same linearization for $\Psi'$ and $\Psi$.
\end{proof}

\begin{lemma}[Convexity and compactness]\label{lem:linearization-compact-convex}
$\mathcal A(\Psi)$ is a non-empty, compact and convex set.
\end{lemma}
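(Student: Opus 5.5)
The plan is to realize $\mathcal A(\Psi)$ as the image of the set $\mathcal M$ of right-invariant means under an affine, weak-* continuous map into the finite-dimensional space $\Hom(V_N,V_M)$. Compactness and convexity of the image then follow from the corresponding properties of $\mathcal M$.

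\textbf{Non-emptiness.} Since $N$ is amenable, a right-invariant mean exists, as recalled above. Hence $\mathcal A(\Psi)\neq\emptyset$.

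\textbf{The domain $\mathcal M$.} Regard means as elements of the dual of $\ell^\infty(N)$ with the weak-* topology. The conditions defining a right-invariant mean are each closed in this topology and are preserved under convex combinations:
\begin{itemize}
\item positivity, $\mathsf A[f]\ge0$ for $f\ge0$;
\item normalization, $\mathsf A[1]=1$;
\item right invariance, $\mathsf A[f(\cdot g)]-\mathsf A[f]=0$ for each $g\in N$ and $f\in\ell^\infty(N)$.
\end{itemize}
These conditions force $\|\mathsf A\|=1$, so $\mathcal M$ lies in the unit ball. By Banach--Alaoglu, $\mathcal M$ is a weak-* compact convex set.

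\textbf{The map $F:\mathcal M\to\Hom(V_N,V_M)$, $F(\mathsf A)=L_{\mathsf A}$.}
\begin{itemize}
\item $F$ is affine. If $\mathsf A=t\mathsf A_1+(1-t)\mathsf A_2$, then $\mathsf A[\Phi_n]=t\mathsf A_1[\Phi_n]+(1-t)\mathsf A_2[\Phi_n]$ for every $n$, computed componentwise. The elements $\pi_N(n)$ span $V_N$, so $L_{\mathsf A}=tL_{\mathsf A_1}+(1-t)L_{\mathsf A_2}$.
\item $F$ is continuous. Fix a basis of $V_M$ and elements $n_1,\dots,n_k\in N$ whose images $\pi_N(n_i)$ form a basis of $V_N$. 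Then $L_{\mathsf A}$ is determined by the finitely many real numbers $\mathsf A[\Phi_{n_i}^{(j)}]$, where $\Phi_{n_i}^{(j)}$ are the coordinate functions of $\Phi_{n_i}$. Each of these is bounded, with $\|\Phi_n\|_\infty\le\Lambda|n|_{S_N}+A$, so each lies in $\ell^\infty(N)$. Evaluation against each is weak-* continuous by definition of the topology.
\end{itemize}

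\textbf{Conclusion.} A continuous image of a compact set is compact, and an affine image of a convex set is convex. Hence $\mathcal A(\Psi)=F(\mathcal M)$ is non-empty, compact and convex.

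\textbf{Main point requiring care.} The one step needing attention is weak-* closedness of right invariance together with compactness of $\mathcal M$. Everything else is bookkeeping, given that $L_{\mathsf A}$ is well defined and linear (already shown before Definition~\ref{def:transport-body}).
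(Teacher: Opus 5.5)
Your proposal is correct and follows the paper's own proof: $\mathcal A(\Psi)$ is the image of the non-empty, weak-* compact, convex set of right-invariant means under the affine, weak-* continuous map $\mathsf A\mapsto L_{\mathsf A}$ into $\Hom(V_N,V_M)$. The paper states this in one line, and you supply the routine details it leaves implicit: weak-* closedness of the defining conditions, Banach--Alaoglu, and continuity via finitely many coordinate evaluations on a basis.
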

\begin{proof}
The right-invariant means form a nonempty convex set, weak-* compact in $\ell^\infty(N)^\ast$, and $\mathsf A\mapsto L_{\mathsf A}$ is an affine and weak-* continuous map into $\Hom(V_N,V_M)$, so $\mathcal A(\Psi)$ is nonempty, convex, and compact.   
\end{proof}

\begin{lemma}[Scale invariance]\label{lem:scale-invariance}
For every right-invariant mean $\mathsf A$, every $n\in N$ and integer $k\ge1$,
$$
L_{\mathsf A}\pi_N(n)=\mathsf A[Q_{k,n}],\qquad Q_{k,n}:=\frac1k\Phi_{n^k}.
$$
\end{lemma}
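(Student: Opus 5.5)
The plan is to iterate the cocycle identity \eqref{eq:cocycle-identity-abelianized-displacement-function} along the powers of $n$ and then use right-invariance of $\mathsf A$. This is the same mechanism that made $n\mapsto\mathsf A[\Phi_n]$ a homomorphism.

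First, I will check by induction on $k$ that
$$
\Phi_{n^k}(x)=\sum_{i=0}^{k-1}\Phi_n(xn^i),\qquad x\in N.
$$
The case $k=1$ is trivial. For the inductive step, apply \eqref{eq:cocycle-identity-abelianized-displacement-function} with $n_1=n^{k-1}$ and $n_2=n$, which gives $\Phi_{n^k}(x)=\Phi_{n^{k-1}}(x)+\Phi_n(xn^{k-1})$. One could equally telescope $\pi_M(\Psi(xn^k))-\pi_M(\Psi(x))$ directly.

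Each summand $x\mapsto\Phi_n(xn^i)$ is the right translate of $\Phi_n$ by $n^i$. It is bounded by $\Lambda|n|_{S_N}+A$, so it lies in $\ell^\infty(N;V_M)$ and $Q_{k,n}$ is bounded. Since $\mathsf A$ acts componentwise and is linear and right-invariant, I get $\mathsf A[\Phi_n(\cdot\,n^i)]=\mathsf A[\Phi_n]$ for each $i$. Therefore
$$
\mathsf A[Q_{k,n}]=\frac1k\sum_{i=0}^{k-1}\mathsf A[\Phi_n(\cdot\,n^i)]=\mathsf A[\Phi_n]=L_{\mathsf A}\pi_N(n).
$$

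A shorter route uses the already-established linearity: $\mathsf A[\Phi_{n^k}]=L_{\mathsf A}\pi_N(n^k)=kL_{\mathsf A}\pi_N(n)$, and dividing by $k$ gives the claim. I will use this as the main line and keep the telescoping identity as the justification behind the homomorphism property.

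There is no real obstacle here. The only point needing care is that right-invariance is applied to vector-valued functions, which is justified componentwise after fixing a basis of $V_M$.
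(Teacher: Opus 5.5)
Your proposal is correct and matches the paper's proof: the paper argues exactly via the homomorphism property, $\mathsf A[\Phi_{n^k}]=k\,\mathsf A[\Phi_n]=kL_{\mathsf A}\pi_N(n)$, and divides by $k$. Your telescoping identity $\Phi_{n^k}(x)=\sum_{i=0}^{k-1}\Phi_n(xn^i)$ together with right-invariance is the correct unpacking of that homomorphism step.
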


\begin{proof}
$n\mapsto\mathsf A[\Phi_n]$ is a homomorphism into $(V_M,+)$, so $\mathsf A[\Phi_{n^k}]=k\mathsf A[\Phi_n]=kL_{\mathsf A}\pi_N(n)$.
\end{proof}

\begin{proposition}[The universal norm upper bound]\label{prop:localization}
For every right-invariant mean $\mathsf A$ and $n\in N$,
\begin{equation}\label{eq:localization}
L_{\mathsf A}\pi_N(n)\in\bigcap_{k\in \NN}\overline{\mathrm{conv}}\{Q_{k,n}(x):x\in N\}.
\end{equation}
Consequently every $L\in\mathcal A(\Psi)$ satisfies $\ell_{S_M}(Lv)\le\Lambda\ell_{S_N}(v)$ for all $v\in V_N$.
\end{proposition}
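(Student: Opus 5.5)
The plan is to combine Lemma \ref{lem:scale-invariance} with the hull property \eqref{eq:mean-in-hull}, and then pass to the limit $k\to\infty$ using the quasi-isometry bound together with Pansu's asymptotics for the norm on $V_N$.

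\emph{Step 1: the localization \eqref{eq:localization}.} Fix $n\in N$ and $k\in\NN$. The function $Q_{k,n}=\frac1k\Phi_{n^k}$ is bounded, with $\|Q_{k,n}\|_\infty\le(\Lambda|n^k|_{S_N}+A)/k$. Lemma \ref{lem:scale-invariance} gives $L_{\mathsf A}\pi_N(n)=\mathsf A[Q_{k,n}]$. Applying \eqref{eq:mean-in-hull} to $f=Q_{k,n}$ places this vector in $\overline{\mathrm{conv}}\{Q_{k,n}(x):x\in N\}$. Since $k$ is arbitrary, the vector lies in the intersection over all $k$.

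\emph{Step 2: the norm bound on lattice directions.} For every $x$ we have $\Phi_{n^k}(x)=\pi_M(\omega(n^k,x))$. Because the unit ball of $\ell_{S_M}$ is $\overline{\mathrm{conv}}\,\pi_M(S_M)$, the triangle inequality gives $\ell_{S_M}(\pi_M(m))\le|m|_{S_M}$ for all $m\in M$. Hence
\[
\ell_{S_M}(Q_{k,n}(x))\le\frac1k\,d_M\bigl(\Psi(x),\Psi(xn^k)\bigr)\le\frac{\Lambda|n^k|_{S_N}+A}{k}.
\]
The $\ell_{S_M}$-ball of that radius is closed and convex, so it contains the closed convex hull of these values. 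Step 1 then gives $\ell_{S_M}(L_{\mathsf A}\pi_N(n))\le\Lambda|n^k|_{S_N}/k+A/k$ for every $k$.

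\emph{Step 3: the limit $k\to\infty$.} I need $|n^k|_{S_N}/k\to\ell_{S_N}(\pi_N(n))$. I would obtain this from \eqref{eq:coordinate-pansu-convergence} with $r=k$, $a=e$, $b=n^k$. Here $|n^k|\le k|n|$, so the uniformity constant $C=|n|$ is admissible. This gives $|n^k|/k-d_{N,\infty}(e,\sigma_k^N(n^k))\to0$.

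I then need to identify $d_{N,\infty}(e,\sigma_k^N(n^k))$. Write $\log_{\widetilde N}n=X_1+\dots+X_s$. Then $\log_{\widetilde N}(n^k)=kX_1+\dots+kX_s$, so $\sigma_k^N(n^k)=\exp_\infty\bigl(q_1(X_1)+\sum_{i\ge2}k^{1-i}q_i(X_i)\bigr)$. This tends to $\exp_\infty(q_1X_1)$ as $k\to\infty$. By continuity of $d_{N,\infty}$ and the fact that $d_{N,\infty}(e,\exp_\infty v)=\ell_{S_N}(v)$, the limit is $\ell_{S_N}(\pi_N(n))$, using the identification $T$. (Alternatively, one could argue by the subadditivity limit together with the $1$-Lipschitz property of $\pi^\infty_N$. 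However, the lower bound is the direction I do not need, and the upper bound suffices here.) Letting $k\to\infty$ gives $\ell_{S_M}(L_{\mathsf A}\pi_N(n))\le\Lambda\ell_{S_N}(\pi_N(n))$.

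\emph{Step 4: extension to all of $V_N$.} The inequality is positively homogeneous. Replacing $n$ by $n^p$ extends it to $\frac1p\pi_N(N)=\frac1p\pi_N(n)$ scaled, and hence to the $\mathbb Q$-span of $\pi_N(N)$. This span is dense in $V_N$ because $\pi_N(N)$ spans $V_N$ over $\RR$ and generates a lattice. Both sides of the inequality are continuous in $v$, so it holds on all of $V_N$.

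The main obstacle is Step 3, namely justifying the asymptotic $|n^k|_{S_N}/k\to\ell_{S_N}(\pi_N n)$ purely from the recorded form \eqref{eq:coordinate-pansu-convergence}. This requires checking the coordinate computation of $\sigma_k^N(n^k)$. That computation uses the fact that the decomposition $\log n=\sum X_i$ with $X_i\in W_i$ scales linearly under powers, which holds because $\log(n^k)=k\log n$.
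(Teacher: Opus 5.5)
Your proposal is correct and follows essentially the same route as the paper: Lemma \ref{lem:scale-invariance} together with \eqref{eq:mean-in-hull} gives the localization, the bound $\ell_{S_M}(Q_{k,n}(x))\le(\Lambda|n^k|_{S_N}+A)/k$ controls the hulls, the limit $|n^k|_{S_N}/k\to\ell_{S_N}(\pi_N(n))$ comes from $\sigma_k^N(n^k)\to\exp_\infty(\pi_N(n))$, and the bound extends to all of $V_N$ by homogeneity, density and continuity. The wording of Step 4 is garbled, but the argument only needs that $\pi_N(N)$ is an additive subgroup, so its $\mathbb{Q}$-span consists of the vectors $\tfrac1p\pi_N(m)$ and positive homogeneity alone handles them (no need to replace $n$ by $n^p$).
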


\begin{proof}
The first part follows from Lemma \ref{lem:scale-invariance} with \eqref{eq:mean-in-hull} applied to $f=Q_{k,n}$ for each $k$. For the bound, since $\ell_{S_M}(\pi_M(w))\le|w|_{S_M}$, by left-invariance of $d_M$,
\begin{align*}
\ell_{S_M}(Q_{k,n}(x))=\frac1k\ell_{S_M}(\pi_M(\omega(n^k,x)))
&\le\frac1k|\Psi(x)^{-1}\Psi(xn^k)|_{S_M}
=\frac1k d_M(\Psi(x),\Psi(xn^k))\\
&\le\frac{\Lambda|n^k|_{S_N}+A}{k}.
\end{align*}
We claim that
\begin{equation}\label{eq:power-stable-length}
\lim_{k\to\infty}\frac{|n^k|_{S_N}}{k}=\ell_{S_N}(\pi_N(n)).
\end{equation}
Write $\log_{\tilde N} n=Y_1+\cdots+Y_s$ $(Y_i\in W_i)$ as in Section \ref{sec:preliminaries}, and put $X_i=q_i(Y_i)$. Then
$$
 \sigma_k^N(n^k)
 =\exp_\infty(X_1+k^{-1}X_2+\cdots+k^{1-s}X_s)
 \to\exp_\infty(X_1) \text{ as }k\to\infty,
$$
where $X_1=\pi_N(n)$. Metric convergence and the identity $d_\infty(e,\exp_\infty v)=\ell_{S_N}(v)$ prove \eqref{eq:power-stable-length}. Hence, for a fixed $\varepsilon>0$, the convex hulls in \eqref{eq:localization} are contained in the $\ell_{S_M}$-ball of radius $\Lambda\ell_{S_N}(\pi_N(n))+\varepsilon$ for large enough $k$. This gives the bound on rational multiples of $\pi_N(n)$, and therefore on all of $V_N$ by density of rationals and continuity.
\end{proof}

\section{Realization of Pansu first-layer differentials}
\label{sec:attainment}

Throughout this section, $N,M,\Psi,\Lambda,A,\ell_{S_N}, \ell_{S_M}$ are as in Section \ref{sec:transport-body}.

We record the following standard compactness statement; compare \cite[Lemma 10.83]{DrutuKapovich2018}. Its proof is given in Subsection \ref{app:cone-map-proof}.

\begin{lemma}[Cone maps]\label{lem:cone-map}
For every $r_j\to\infty$ there is a subsequence, not relabeled, and a $\Lambda$-bi-Lipschitz homeomorphism $\Psi_\infty:N_\infty\to M_\infty$ with $\Psi_\infty(e)=e$ such that
\begin{equation}\label{eq:uniform-conv}
\gamma_j(R):=\sup_{x\in D_j^N\cap B_{N,\infty}(e,R)}d_{M,\infty}\bigl(\Psi_j(x),\Psi_\infty(x)\bigr)\longrightarrow0 \qquad(R\ge1 \text{ fixed}),
\end{equation}
where
$$
D_j^N:=\sigma_{r_j}^N(N),\qquad D_j^M:=\sigma_{r_j}^M(M),
$$
and
$$
\Psi_j:=\sigma_{r_j}^M\circ\Psi\circ(\sigma_{r_j}^N)^{-1}:D_j^N\longrightarrow D_j^M.
$$
\end{lemma}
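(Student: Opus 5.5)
The plan is an Arzelà--Ascoli-type argument on a countable dense set, followed by extension and inversion. The key tool is the coordinate Pansu convergence \eqref{eq:coordinate-pansu-convergence}.

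\emph{Coarse bi-Lipschitz bounds on the rescaled maps.} Fix $R\ge1$ and points $x=\sigma_{r_j}^N(a)$, $y=\sigma_{r_j}^N(b)$ in $D_j^N\cap B_{N,\infty}(e,R)$. By \eqref{eq:cone-bounded-lift}, $|a|,|b|\le C_Rr_j$. Since $\Psi(e)=e$, we get $|\Psi(a)|_{S_M}\le\Lambda C_Rr_j+A$. Applying \eqref{eq:coordinate-pansu-convergence} in both $N$ and $M$ (with constant $C'=\Lambda C_R+A$) and dividing the quasi-isometry inequality by $r_j$ gives
$$
\Lambda^{-1}d_{N,\infty}(x,y)-\eta_j(R)\le d_{M,\infty}(\Psi_j(x),\Psi_j(y))\le\Lambda d_{N,\infty}(x,y)+\eta_j(R),
$$
with $\eta_j(R)\to0$ uniformly in $x,y$. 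In particular $\Psi_j$ maps $D_j^N\cap B(e,R)$ into the fixed ball $B_{M,\infty}(e,\Lambda R+1)$ for large $j$, and this ball is compact since $d_{M,\infty}$ is proper.

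\emph{Constructing the limit.} Choose a countable dense set $\{z_k\}\subset N_\infty$. By \eqref{eq:cone-density}, pick $x_{j,k}\in D_j^N$ with $x_{j,k}\to z_k$. The points $\Psi_j(x_{j,k})$ lie in compact balls, so a diagonal subsequence makes $\Psi_j(x_{j,k})\to w_k$ for every $k$. The two-sided estimate passes to the limit and gives $\Lambda^{-1}d(z_k,z_l)\le d(w_k,w_l)\le\Lambda d(z_k,z_l)$. Hence $z_k\mapsto w_k$ extends uniquely to a $\Lambda$-bi-Lipschitz map $\Psi_\infty$. Taking $z_0=e$ and $x_{j,0}=e$ gives $\Psi_\infty(e)=e$.

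\emph{Uniform convergence \eqref{eq:uniform-conv}.} This is the step I expect to need the most care, because the domains $D_j^N$ vary with $j$. Given $\epsilon>0$, take a finite $\epsilon$-net $z_{k_1},\dots,z_{k_m}$ of $\overline{B(e,R)}$. For $x\in D_j^N\cap B(e,R)$, pick $z_{k_i}$ with $d(x,z_{k_i})<\epsilon$; then $d(x,x_{j,k_i})<2\epsilon$ for large $j$, uniformly over the finitely many indices. We then bound
$$
d(\Psi_j(x),\Psi_\infty(x))\le d(\Psi_j(x),\Psi_j(x_{j,k_i}))+d(\Psi_j(x_{j,k_i}),w_{k_i})+d(w_{k_i},\Psi_\infty(x)).
$$
The first term is at most $2\Lambda\epsilon+\eta_j(R+1)$, the second tends to zero, and the third is at most $\Lambda\epsilon$. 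So $\limsup_j\gamma_j(R)\le3\Lambda\epsilon$, and letting $\epsilon\to0$ gives \eqref{eq:uniform-conv}.

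\emph{Surjectivity.} A $\Lambda$-bi-Lipschitz map is injective with closed image, so only surjectivity remains. Let $w\in M_\infty$. By \eqref{eq:cone-density}, choose $\sigma_{r_j}^M(m_j)\to w$. Coarse surjectivity of $\Psi$ gives $a_j\in N$ with $d_M(\Psi(a_j),m_j)\le R_0$, so $\Psi_j(\sigma_{r_j}^N(a_j))\to w$ after rescaling. The lower bound from the first step, applied against $e$, shows that the points $\sigma_{r_j}^N(a_j)$ stay in a bounded ball: $d_{N,\infty}(e,\cdot)\le\Lambda(d(e,w)+1)$. Strictly, the first step needs a priori bounds on $|a_j|$; we get these directly from the quasi-isometry inequality, $|a_j|\le\Lambda(|\Psi(a_j)|+A)=O(r_j)$. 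Properness then gives a subsequence with $\sigma_{r_j}^N(a_j)\to x$, and \eqref{eq:uniform-conv} together with continuity of $\Psi_\infty$ yields $\Psi_\infty(x)=w$. Thus $\Psi_\infty$ is a $\Lambda$-bi-Lipschitz homeomorphism.
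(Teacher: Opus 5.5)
Your proposal is correct and follows essentially the same route as the paper's appendix proof. Both argue through the coarse bi-Lipschitz estimate obtained from \eqref{eq:cone-bounded-lift} and \eqref{eq:coordinate-pansu-convergence}, diagonal extraction on a countable dense set followed by bi-Lipschitz extension, a finite-net argument for \eqref{eq:uniform-conv}, and surjectivity via coarse surjectivity, properness and the uniform convergence. The only detail to make explicit is that your finite $\epsilon$-net must be drawn from the dense set $\{z_k\}$ (the paper gets this by perturbing a net of half the mesh); your third error term $d(w_{k_i},\Psi_\infty(x))\le\Lambda\epsilon$ silently relies on it.
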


\begin{definition}[Pansu differentiability point]\label{def:pansu-differentiability-point}
Let $F:N_\infty\to M_\infty$ be Lipschitz. A point $p\in N_\infty$ is a \emph{Pansu differentiability point} of $F$ if there is a graded homomorphism $D_pF:N_\infty\to M_\infty$ such that
$$
\delta_{1/t}\left(F(p)^{-1}F(p\delta_t(g))\right)
\longrightarrow D_pF(g)
$$
uniformly for $g$ on compact subsets of $N_\infty$ as $t\to0^+$.
\end{definition}

\begin{lemma}[First layers of differentials]\label{lem:first-layer-biLip}
The map $\Psi_\infty$ is Pansu differentiable at almost every $p\in N_\infty$. At every such point, $D_p\Psi_\infty$ is a graded homomorphism. If $D^1_p:V_N\to V_M$ denotes its first-layer map, then
\begin{equation}\label{eq:first-layer-norm-bound}
\Lambda^{-1}\ell_{S_N}(v)\le\ell_{S_M}(D^1_pv)\le\Lambda\ell_{S_N}(v)\qquad(v\in V_N).
\end{equation}
Moreover $D_p\Psi_\infty$ is a graded isomorphism. In particular $D^1_p:V_N\to V_M$ is a linear isomorphism.
\end{lemma}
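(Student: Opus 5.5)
The plan is to invoke Pansu's Rademacher-type theorem for Lipschitz maps between Carnot groups, then read off the bi-Lipschitz bounds from the limit definition.

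\emph{Step 1: differentiability.} By Lemma~\ref{lem:cone-map}, $\Psi_\infty$ is a $\Lambda$-bi-Lipschitz map between the Carnot groups $(N_\infty,d_{N,\infty})$ and $(M_\infty,d_{M,\infty})$. The sub-Finsler metrics are bi-Lipschitz equivalent to sub-Riemannian ones, so Pansu's theorem \cite{Pansu83} applies. It gives Pansu differentiability at Haar-almost every $p$. It also shows that $D_p\Psi_\infty$ is a homomorphism commuting with dilations, $D_p\Psi_\infty\circ\delta_t=\delta_t\circ D_p\Psi_\infty$. A dilation-equivariant continuous homomorphism has a differential that preserves the eigenspace decomposition of $\delta_t$ on the Lie algebras, so it is graded. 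Its first-layer map $D_p^1$ is then a linear map $V_N\to V_M$, and it agrees with $\pi_M^\infty\circ D_p\Psi_\infty\circ\exp_\infty$ on $V_N$.

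\emph{Step 2: the bi-Lipschitz bounds.} The maps $F_t(g):=\delta_{1/t}(\Psi_\infty(p)^{-1}\Psi_\infty(p\delta_t g))$ are $\Lambda$-bi-Lipschitz, because left translations are isometries and dilations scale distances exactly. Since $F_t\to D_p\Psi_\infty$ uniformly on compacta, the limit $D_p\Psi_\infty$ satisfies
\[
\Lambda^{-1}d_{N,\infty}(g,h)\le d_{M,\infty}(D_p\Psi_\infty g,D_p\Psi_\infty h)\le\Lambda d_{N,\infty}(g,h).
\]
The upper bound in \eqref{eq:first-layer-norm-bound} follows from $\pi_M^\infty$ being $1$-Lipschitz and from $d_{N,\infty}(e,\exp_\infty v)=\ell_{S_N}(v)$:
\[
\ell_{S_M}(D_p^1v)\le d_{M,\infty}\bigl(e,D_p\Psi_\infty(\exp_\infty v)\bigr)\le\Lambda\ell_{S_N}(v).
\]

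\emph{Step 3: the lower bound.} This is the main point, since $\pi_M^\infty$ only controls lengths from above. I would use that a graded homomorphism maps $\exp_\infty v$ with $v$ in the first layer to $\exp_\infty(D^1_pv)$, which is again in the first layer. Then
\[
\ell_{S_M}(D^1_pv)=d_{M,\infty}\bigl(e,\exp_\infty(D_p^1v)\bigr)=d_{M,\infty}\bigl(e,D_p\Psi_\infty(\exp_\infty v)\bigr)\ge\Lambda^{-1}\ell_{S_N}(v).
\]
It follows that $D_p^1$ is injective.

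\emph{Step 4: isomorphism.} To see that $D_p\Psi_\infty$ is a bijection, note that it is a bi-Lipschitz embedding, hence injective. It is also surjective. One route is to apply the same argument to $\Psi_\infty^{-1}$, which is bi-Lipschitz and differentiable at $\Psi_\infty(p)$ for a.e.\ $p$ since bi-Lipschitz maps preserve Haar-null sets; the chain rule then gives a two-sided inverse. A second route avoids this and works at every differentiability point. The image of $D_p\Psi_\infty$ is a closed subgroup that is dilation-invariant and bi-Lipschitz to $N_\infty$, so it has Hausdorff dimension $Q_N$. Since $\Psi_\infty$ is a bi-Lipschitz homeomorphism, $Q_N=Q_M$. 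A proper graded subgroup has strictly smaller homogeneous dimension, so the image is all of $M_\infty$. Either way $D_p\Psi_\infty$ is a graded isomorphism, and $D_p^1$ is a linear isomorphism.
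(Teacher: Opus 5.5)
Your proposal is correct and follows the paper's proof: Pansu's Rademacher theorem (which is in \cite{Pansu89} rather than \cite{Pansu83}), then the $\Lambda$-bi-Lipschitz bounds passed to the limit $D_p\Psi_\infty$, then evaluation at $\exp_\infty v$ using $d_\infty(e,\exp_\infty w)=\ell_{S_M}(w)$. For surjectivity the paper does the same layerwise count as your second route, but it uses equality of topological dimensions, $\dim N_\infty=\dim M_\infty$, instead of $Q_N=Q_M$, so it avoids identifying the Hausdorff dimension of a homogeneous subgroup with its homogeneous dimension; your first route via $\Psi_\infty^{-1}$ and the chain rule gives the conclusion only at almost every $p$, as you note.
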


\begin{proof}
Differentiability and the graded-homomorphism property are Pansu's Rademacher theorem \cite[Theorem 2]{Pansu89}. We already know that $\Psi_\infty$ is globally $\Lambda$-bi-Lipschitz. Fix $g\in N_\infty$. By left-invariance and homogeneity of $d_\infty$, $d_\infty(p\delta_t(g),p)=t d_\infty(e,g)$, so
$$
\Lambda^{-1}td_\infty(e,g)\le d_\infty(\Psi_\infty(p\delta_t(g)),\Psi_\infty(p))
\le\Lambda td_\infty(e,g).
$$
Dividing by $t$ and letting $t\to0$, the middle term tends to $d_\infty(e,D_p\Psi_\infty(g))$, giving
\begin{equation}\label{eq:full-differential-biLip}
\Lambda^{-1}d_\infty(e,g)\le d_\infty(e,D_p\Psi_\infty(g))\le\Lambda d_\infty(e,g) \qquad(g\in N_\infty).
\end{equation}
Thus $D_p\Psi_\infty$ is an injective graded homomorphism. Specializing \eqref{eq:full-differential-biLip} to $g=\exp v$ with $v\in V_N$ and using $D_p\Psi_\infty(\exp v)=\exp(D^1_pv)$ together with $d_\infty(e,\exp w)=\ell_{S_M}(w)$ for $w\in V_M$ gives \eqref{eq:first-layer-norm-bound}.

For surjectivity, since $\Psi_\infty$ is a bi-Lipschitz homeomorphism (Lemma \ref{lem:cone-map}), $N_\infty$ and $M_\infty$ have the same topological, hence Lie dimension. The injective graded homomorphism $D_p\Psi_\infty$ restricts injectively on every $V^N_i$, so $\sum_{i=1}^s \dim V^N_i=\sum_{i=1}^s\dim V^M_i$ forces $\dim V^N_i=\dim V^M_i$ for each $i$. Hence $D_p\Psi_\infty$ is a graded isomorphism, and in particular $D^1_p:V_N\to V_M$ is a linear isomorphism.
\end{proof}

\begin{lemma}[An approximation lemma]\label{lem:mesoscopic-approximation}
Let $\Psi_\infty$ be a cone map of $\Psi$ as in Lemma \ref{lem:cone-map} and $p$ a Pansu differentiability point of $\Psi_\infty$. Put $D^1:=D_p^1\Psi_\infty$. Suppose that $\Psi_\infty$ is obtained along $r_j\to\infty$. For every pair of finite nonempty subsets $E,F\subseteq N$, every $\varepsilon>0$, and every $R_0\ge1$, there is $J_0\ge1$ such that for every $j\ge J_0$ there are $c_j\in N$ and integers $\rho_j\ge R_0$ so that, with $\Omega_j=c_jB_{\rho_j}$,
\begin{align}
\max_{n\in E}\left\|\frac1{|\Omega_j|}\sum_{x\in\Omega_j}\Phi_n(x)-D^1\pi_N(n)\right\|&<\varepsilon, \label{eq:mesoscopic-cocycle}\\
\max_{g\in F}\frac{|\Omega_j g\mathbin\triangle\Omega_j|}{|\Omega_j|}
&<\varepsilon, \label{eq:mesoscopic-folner}
\end{align}
where $\|\cdot\|$ denotes the norm $\ell_{S_M}$ on $V_M$. Moreover,
\begin{equation}\label{eq:mesoscopic-position}
 \frac{\rho_j}{r_j}<\varepsilon, \qquad d_{N,\infty}(\sigma_{r_j}^N(c_j),p)<\varepsilon.
\end{equation}
\end{lemma}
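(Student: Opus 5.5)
The plan is a two-scale argument. Choose a small mesoscopic parameter $\tau>0$ and a base point $c_j\in N$ with $\sigma^N_{r_j}(c_j)\to p$; such points exist by \eqref{eq:cone-density}. Set $\rho_j:=\lfloor\tau r_j\rfloor$, so that $\rho_j/r_j\approx\tau<\varepsilon$ and $\rho_j\ge R_0$ for large $j$. The averages over $\Omega_j=c_jB_{\rho_j}$ are then controlled in two ways. The F{\o}lner estimate \eqref{eq:mesoscopic-folner} follows from \eqref{eq:balls-Folner}: $|\Omega_jg\triangle\Omega_j|=|B_{\rho_j}g\triangle B_{\rho_j}|\le|B_{\rho_j+k}|-|B_{\rho_j-k}|$ with $k=\max_{g\in F}|g|$, and $k=o(\rho_j)$. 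The position estimate \eqref{eq:mesoscopic-position} is immediate from the choice of $c_j$.

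The heart of the proof is \eqref{eq:mesoscopic-cocycle}. Since $\Phi$ satisfies the cocycle identity \eqref{eq:cocycle-identity-abelianized-displacement-function} and the $\Omega_j$ are F{\o}lner, the average of $\Phi_n$ over $\Omega_j$ equals $\frac1k$ times the average of $\Phi_{n^k}$, up to an error of order $k(\Lambda|n|+A)\cdot|\Omega_jn\triangle\Omega_j|/|\Omega_j|$. This error is $o(1)$ for any fixed $k$, and even for $k$ growing slowly compared with $\rho_j$. Writing $\Phi_{n^k}(x)=\sum_{i<k}\Phi_n(xn^i)$ and averaging over $\Omega_j$ gives the telescoping comparison. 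I therefore take $k=k_j:=\lfloor\eta\rho_j\rfloor$ with a second small parameter $\eta$. Since $\Omega_jn^i$ differs from $\Omega_j$ only in a boundary layer of width $i|n|\le\eta\rho_j|n|$, the error is bounded by a constant multiple of $(\Lambda|n|+A)\bigl(|B_{\rho_j(1+\eta|n|)}|-|B_{\rho_j(1-\eta|n|)}|\bigr)/|B_{\rho_j}|\lesssim(\Lambda|n|+A)\,\eta|n|\,Q_N$ by the volume asymptotics. This is small once $\eta$ is small.

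Now fix $x\in\Omega_j$ and evaluate $\frac1{k_j}\pi_M(\Psi(x)^{-1}\Psi(xn^{k_j}))$. The displacement is of size $\approx\eta\rho_j\approx\eta\tau r_j$, so I rescale by $r_j$. Set $y=\sigma_{r_j}^N(x)$; this lies within about $\tau$ of $p$. Then
$$\pi^\infty_M\bigl(\Psi_j(\sigma(x))^{-1}\Psi_j(\sigma(xn^{k_j}))\bigr)=\tfrac1{r_j}\Phi_{n^{k_j}}(x)+o(1).$$
The mismatch between $\sigma(x)^{-1}\sigma(xn^k)$ and the group structure comes from $\iota$ not being a homomorphism; for bounded arguments it is harmless after projecting to the first layer, because $\pi^\infty$ and $\pi$ agree, so the abelianized displacement is exact. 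By \eqref{eq:uniform-conv}, $\Psi_j$ can be replaced by $\Psi_\infty$ up to $o(1)$. Next, $\sigma_{r_j}(xn^{k_j})$ is close to $y\,\delta_{\eta\tau}(\exp\pi_N(n))$, by \eqref{eq:power-stable-length} together with left-invariance (compare $\sigma_k^N(n^k)\to\exp_\infty(X_1)$ in the proof of Proposition \ref{prop:localization}).

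It remains to use differentiability at the nearby points $y$. Here I apply Pansu differentiability at $p$ itself. Write $y=p\,\delta_\tau(h)$ with $h$ bounded, and use
$$\Psi_\infty(y)^{-1}\Psi_\infty(y\delta_{\eta\tau}v)=\bigl[\Psi_\infty(p)^{-1}\Psi_\infty(p\delta_\tau h)\bigr]^{-1}\Psi_\infty\bigl(p\delta_\tau(h\delta_\eta v)\bigr).$$
After applying $\pi_M^\infty$ and dividing by $\tau$, this equals $D^1\pi^\infty(h\delta_\eta v)-D^1\pi^\infty(h)+o_{\tau\to0}(1)=\eta D^1\pi_N(n)+o_\tau(1)$, uniformly over $h$ in a compact set. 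Dividing by $\eta$ yields an error $o_\tau(1)/\eta$. The parameters are chosen in order: first $\eta$, then $\tau$ with $o_\tau(1)\ll\eta\varepsilon$, then $j$ large.

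I expect the main obstacle to be this last uniformity. We must differentiate at $p$ but evaluate increments based at all points of a $\tau$-ball around $p$, and the increment scale $\eta\tau$ must be much larger than the error $o(\tau)$ of the differential. The fix is the order of choosing $\eta$ before $\tau$ described above, combined with carefully tracking that all the $o(1)$ terms from Lemma \ref{lem:cone-map} and \eqref{eq:coordinate-pansu-convergence} are uniform on the relevant compact sets, over the finitely many $n\in E$.
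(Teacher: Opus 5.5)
Your proposal is essentially the paper's proof: the same telescoping from $\Phi_n$ to $\frac1{k_j}\Phi_{n^{k_j}}$ with $k_j$ a fixed small fraction of $\rho_j$, the same transfer to $\Psi_\infty$ via Lemma \ref{lem:cone-map}, and the same use of differentiability at $p$ uniformly over base points in a shrinking ball. Your $(\tau,\eta)$ play the role of the paper's $(Rt,1/R)$ and are chosen in the same order.

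Two small corrections. First, the closeness of $\sigma_{r_j}^N(xn^{k_j})$ to $\sigma_{r_j}^N(x)\exp_\infty(\eta\tau\pi_N(n))$ does not follow from \eqref{eq:power-stable-length} and left-invariance, since $\sigma_r^N$ is not a homomorphism. It follows from the locally uniform convergence of the transported products $*_r$ to the product of $N_\infty$, which is what the paper uses. Second, your displayed identity needs a factor $\Psi_\infty(p)^{-1}$ in front of $\Psi_\infty\bigl(p\,\delta_\tau(h\delta_\eta v)\bigr)$.
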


\begin{proof}
Let $r_j\to\infty$ produce $\Psi_\infty$, write $D:=D_p\Psi_\infty$, $D^1:=D^1_p\Psi_\infty$, and for $t\in(0,1]$, $R\ge0$ put
\begin{equation}\label{eq:Pansu-differentiability-uniform}
\eta(t,R):=\sup_{\substack{g\in N_{\infty}\\d_\infty(e,g)\le R}} d_\infty\left(\delta_{1/t}(\Psi_\infty(p)^{-1}\Psi_\infty(p\delta_t(g))),D(g)\right),
\end{equation}
so differentiability at $p$ yields $\eta(t,R)\to0$ as $t\to0$ for each $R$. Choose once and for all $x_j\in N$ with $d_\infty(\sigma_{r_j}^N(x_j),p)\to0$; this choice is independent of the increment $n$ considered below.

For $h\in N_\infty$ with $d_\infty(e,h)\le R$ and $v\in V_N$ with $\ell_{S_N}(v)\le R$, and $q:=p\delta_t(h)$,
\begin{equation}\label{eq:cone-increment}
\bigl\|\pi_M^\infty(\Psi_\infty(q\exp_\infty(tv)))-\pi_M^\infty(\Psi_\infty(q))-tD^1v\bigr\|
\le2t\eta(t,2R).
\end{equation}
Indeed, by the definition of $\eta$, the homogeneity of $\pi_M^\infty$, and its $1$-Lipschitz property, whenever $g\in N_\infty$ with $d_\infty(e,g)\le2R$,
$$
 \left\|
 \pi_M^\infty\left(
  \Psi_\infty(p)^{-1}\Psi_\infty(p\delta_t(g))
 \right)-t\pi_M^\infty(D(g))
 \right\|
 \le t\eta(t,2R).
$$
Apply this with $g=h\exp_\infty v$ and with $g=h$. Here $q\exp_\infty(tv)=p\delta_t(h\exp_\infty v)$ because $v\in V^N_1$. Since $d_\infty(e,h\exp_\infty v)\le2R$ and both $D$ and $\pi_M^\infty$ are homomorphisms, we have
\begin{align*}
&\bigl\|\pi_M^\infty(\Psi_\infty(q\exp_\infty(tv)))-\pi_M^\infty(\Psi_\infty(q))-tD^1v\bigr\| \\
 &\le \left\| \pi_M^\infty(\Psi_\infty(p)^{-1}\Psi_\infty(p\delta_t(g)))-t\pi_M^\infty(D(g))\right\|+\left\|\pi_M^\infty(
  \Psi_\infty(p)^{-1}\Psi_\infty(p\delta_t(h)))-t\pi_M^\infty(D(h))\right\|\\
 &\le 2t\eta(t,2R).
\end{align*}

Fix $n\in N$, write $|n|:=|n|_{S_N}$, and put $v:=\pi_N(n)$. Choose $R\ge\max\{1,2|n|\}$ and $t\in(0,1]$, then set $k_j:=\lceil tr_j\rceil$ and $\rho_j:=\lceil Rtr_j\rceil$. Put $F_j:=x_jB_{\rho_j}$ and $E_j(n):=|F_j|^{-1}\sum_{x\in F_j}\Phi_n(x)$.

Observe that $\Phi_{n^k}(x)=\sum_{i=0}^{k-1}\Phi_n(xn^i)$, so
$$
\left\|E_j(n)-\frac{1}{|F_j|}\sum_{x\in F_j}\frac{1}{k_j}\Phi_{n^{k_j}}(x)\right\|
\le\|\Phi_n\|_\infty\frac{|B_{\rho_j+k_j|n|}|-|B_{\rho_j-k_j|n|}|}{|B_{\rho_j}|},
$$
since $F_jn^i\triangle F_j\subseteq x_j(B_{\rho_j+k_j|n|}\setminus B_{\rho_j-k_j|n|})$. By \eqref{eq:balls-Folner}, the ratio tends to $(1+|n|/R)^{Q_N}-(1-|n|/R)^{Q_N}\le C_{Q_N}|n|/R$ for a constant $C_{Q_N}$ depending only on $Q_N$, which implies
\begin{equation}\label{eq:telescoping-error}
\limsup_j\left\|E_j(n)-\frac{1}{|F_j|}\sum_{x\in F_j}\frac{1}{k_j}\Phi_{n^{k_j}}(x)\right\|
\le\frac{C_{Q_N}|n|\|\Phi_n\|_\infty}{R}.
\end{equation}
Write $x=x_jy\in F_j$, where $y\in B_{\rho_j}$, and put
$$
\sigma_j:=\sigma_{r_j}^N,
\qquad
q_x:=\sigma_j(x).
$$
Since $\sigma_{r_j}(x_j)\to p$, \eqref{eq:cone-bounded-lift} gives $C_p>0$ such that $|x_j|_{S_N}\le C_pr_j$ for all $j$. For large enough $j$, we have $\rho_j=\lceil Rtr_j\rceil\le(R+1)r_j$, and hence
$$
|x_jy|_{S_N}\le |x_j|_{S_N}+|y|_{S_N}
\le (C_p+R+1)r_j
\qquad(y\in B_{\rho_j}).
$$
Thus, for large enough $j$, $x_jB_{\rho_j}\subseteq B_{Cr_j}$, where $C:=C_p+R+1$ is independent of $j$. Applying \eqref{eq:coordinate-pansu-convergence} with $a=x_jy$ and $b=x_j$ shows that the supremum in
$$
\varepsilon_j:=
\sup_{y\in B_{\rho_j}}
\left|
d_\infty\bigl(\sigma_j(x_jy),\sigma_j(x_j)\bigr)
-\frac{|y|_{S_N}}{r_j}
\right|
+d_\infty(\sigma_j(x_j),p)+\frac1{r_j}
$$
tends to zero. The remaining two terms also tend to zero, so $\varepsilon_j\to0$.

Consequently, uniformly over $x\in F_j$, we have
\begin{align*}
d_\infty(q_x,p)
&\le
d_\infty\bigl(\sigma_j(x_jy),\sigma_j(x_j)\bigr)
+d_\infty(\sigma_j(x_j),p)\\
&\le
\frac{|y|_{S_N}}{r_j}+\varepsilon_j-\frac{1}{r_j}
\le Rt+\varepsilon_j.
\end{align*}
Define $h_x:=\delta_{1/t}(p^{-1}q_x).$ Then $q_x=p\delta_t(h_x)$ and, by homogeneity and left invariance,
$$
d_\infty(e,h_x)
=\frac1td_\infty(p,q_x)
\le R+\varepsilon_j/t.
$$
Here $t$ is fixed while $j\to\infty$, so $d_\infty(e,h_x)\le2R$ for all sufficiently large $j$, uniformly over $x\in F_j$. Write
$$
\log_{\widetilde N}n=Y_1+\cdots+Y_s,
\qquad Y_i\in W_i,
$$
and put $X_i:=q_i(Y_i)\in V_i^N$. Under the canonical identification of $V^N_1$ with $V_N$ one has
$$
X_1=\pi_N(n)=v.
$$
Since $n^{k_j}=\exp_{\widetilde N}(k_j\log_{\widetilde N}n)$,
$$
z_j:=\sigma_j(n^{k_j})
=
\exp_\infty\left(
 \frac{k_j}{r_j}X_1+
 \frac{k_j}{r_j^2}X_2+\cdots+
 \frac{k_j}{r_j^s}X_s
\right).
$$
Because $k_j/r_j\to t$ and $k_j/r_j^i\to0$ for every $i\ge2$,
$$
z_j\longrightarrow\exp_\infty(tv).
$$
Transport the multiplication of $\widetilde N$ to $N_\infty$ by defining
$$
u*_rv:=u*_r^Nv= 
\sigma_r^N\left(
 (\sigma_r^N)^{-1}(u)(\sigma_r^N)^{-1}(v)
\right).
$$
As in the proof of \eqref{eq:cone-density} in Subsection \ref{app:cone-density-details}, we have the locally uniform convergence
$$
*_r\to\text{the multiplication of }N_\infty.
$$
Now
$$
\sigma_j(xn^{k_j})=q_x*_{r_j}z_j.
$$
The points $q_x$, for $x\in F_j$, remain in a fixed compact subset of $N_\infty$, and the sequence $(z_j)$ also remains in a fixed compact subset. Choose a compact set $K$ containing all these points and $\exp_\infty(tv)$, and set
$$
\lambda_j(K):=
\sup_{u,w\in K}d_\infty(u*_{r_j}w,uw).
$$
Local uniform convergence of the products gives $\lambda_j(K)\to0$ as $j\to\infty$. Consequently, by left invariance,
$$
\begin{aligned}
d_\infty\left(
 \sigma_j(xn^{k_j}),q_x\exp_\infty(tv)
\right)
&=
d_\infty\left(
 q_x*_{r_j}z_j,q_x\exp_\infty(tv)
\right)\\
&\le
\lambda_j(K)
+d_\infty\left(z_j,\exp_\infty(tv)\right)
=:\beta_j.
\end{aligned}
$$
Since $\beta_j\to0$ and is independent of $x$, this convergence is uniform over $x\in F_j$.

All $z\in F_j\cup F_jn^{k_j}$ satisfy $|z|_{S_N}\le Cr_j$ for some $C\ge1$ independent of $j$. By \eqref{eq:coordinate-pansu-convergence}, after enlarging $C$ we also have $\sigma_j(z)\in B_{N,\infty}(e,C)$ for all sufficiently large $j$, so \Cref{lem:cone-map} applies. Put
$$
\Delta_j(x):=
\pi_M^\infty(\Psi_\infty(q_x\exp(tv)))
-\pi_M^\infty(\Psi_\infty(q_x)).
$$
Using $\pi_M(w)=r_j\pi_M^\infty(\delta_{1/r_j}w)$, the $1$-Lipschitz property of $\pi_M^\infty$, the $\Lambda$-Lipschitz property of $\Psi_\infty$, and the uniform bounds above, the triangle inequality gives
\begin{equation}\label{eq:discrete-cone-increment}
\bigl\|\Phi_{n^{k_j}}(x)-r_j\Delta_j(x)\bigr\|
\le r_j\bigl(2\gamma_j(C)+\Lambda\beta_j\bigr)
\qquad(x\in F_j).
\end{equation}
By \eqref{eq:cone-increment}, applied with parameter $2R$, and $tr_j/k_j\to1$, $r_j/k_j\to1/t$,
\begin{align*}
\Bigl\|\frac{1}{k_j}\Phi_{n^{k_j}}(x)-D^1v\Bigr\| 
&\le \frac1{k_j}
 \bigl\|\Phi_{n^{k_j}}(x)-r_j\Delta_j(x)\bigr\|\\
&\quad+\frac{r_j}{k_j}\|\Delta_j(x)-tD^1v\|
 +\left|\frac{tr_j}{k_j}-1\right|\|D^1v\|\\
&\le \frac{r_j}{k_j}(2\gamma_j(C)+\Lambda\beta_j)
 +\frac{2tr_j}{k_j}\eta(t,4R)\\
&\quad+\left|\frac{tr_j}{k_j}-1\right|\|D^1v\|,
\end{align*}
uniformly over $x\in F_j$. Combining with \eqref{eq:telescoping-error}, we get
\begin{equation}\label{eq:main-window-estimate}
\limsup_j\|E_j(n)-D^1v\|\le\frac{C_{Q_N}|n|\|\Phi_n\|_\infty}{R}+2\eta(t,4R).
\end{equation}
Now fix finite subsets $E,F\subseteq N$ and $\varepsilon>0$. Choose $R\ge\max\{1,2\max_{n\in E}|n|_{S_N}\}$ so large that
$$
\frac{C_{Q_N}|n|\|\Phi_n\|_\infty}{R}<\frac\varepsilon4
\qquad(n\in E).
$$
Then choose $t\in(0,1]$ so small that
$$
 2\eta(t,4R)<\varepsilon/4
 \qquad\text{and}\qquad Rt<\varepsilon/2.
$$
The estimate \eqref{eq:main-window-estimate}, applied to the finitely many elements of $E$, shows that \eqref{eq:mesoscopic-cocycle} holds with $c_j=x_j, \Omega_j=F_j$ for all sufficiently large $j$. Since $\rho_j=\lceil Rtr_j\rceil\to\infty$, \eqref{eq:balls-Folner} gives, uniformly for $g\in F$,
$$
\frac{|F_jg\mathbin\triangle F_j|}{|F_j|}
\le
\frac{|B_{\rho_j+|g|_{S_N}}|-|B_{\rho_j-|g|_{S_N}}|}{|B_{\rho_j}|}
\longrightarrow0.
$$
Thus there is $J_0$ such that, for every $j\ge J_0$,
$$
 \rho_j\ge R_0,\qquad
 \frac{\rho_j}{r_j}<\varepsilon,\qquad
 d_{N,\infty}(\sigma_{r_j}^N(x_j),p)<\varepsilon,
$$
which proves \eqref{eq:mesoscopic-folner} and \eqref{eq:mesoscopic-position}.
\end{proof}

\begin{theorem}[Averages on translated balls]
\label{thm:translated-ball-realization}
Let $\Psi_\infty$ be a cone map obtained along $r_j\to\infty$, and let $p$ be a Pansu differentiability point of $\Psi_\infty$. There are centers $c_j\in N$ and integers $\rho_j\to\infty$ such that
\begin{equation}\label{eq:geometric-limits}
 \frac{\rho_j}{r_j}\longrightarrow0,
 \qquad
 \sigma_{r_j}^N(c_j)\longrightarrow p,
\end{equation}
and, for every $n,g\in N$,
\begin{align}
 \frac1{|B_{\rho_j}|}\sum_{x\in c_jB_{\rho_j}}\Phi_n(x)
 &\longrightarrow D_p^1\Psi_\infty\,\pi_N(n),
 \label{eq:concrete-realization}\\
 \frac{|c_jB_{\rho_j}g\mathbin\triangle c_jB_{\rho_j}|}
 {|B_{\rho_j}|}&\longrightarrow0.
 \label{eq:concrete-folner}
\end{align}
\end{theorem}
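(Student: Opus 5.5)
The plan is a diagonal argument built on Lemma \ref{lem:mesoscopic-approximation}. That lemma already gives, for any fixed finite $E,F\subseteq N$, tolerance $\varepsilon$ and lower bound $R_0$, all the required properties for every sufficiently large $j$. So it suffices to let these parameters become finer as $j$ increases and then check that the resulting sequence works for every $n,g\in N$ simultaneously.

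\emph{Step 1: an exhaustion.} Since $N$ is countable, enumerate $N=\{n_1,n_2,\dots\}$ and put $E_m=F_m:=\{n_1,\dots,n_m\}$, $\varepsilon_m:=1/m$, $R_0^{(m)}:=m$. For each $m$, Lemma \ref{lem:mesoscopic-approximation} gives an index $J_0(m)$, and we may replace it by a larger value so that $J_0(1)<J_0(2)<\cdots$. For $J_0(m)\le j<J_0(m+1)$, we take $c_j\in N$ and $\rho_j$ to be the center and radius that the lemma provides for the data $(E_m,F_m,\varepsilon_m,R_0^{(m)})$ at this index $j$. For $j<J_0(1)$ any choice will do, since finitely many terms do not affect the limits.

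\emph{Step 2: verification.} Let $m(j)$ denote the unique $m$ with $J_0(m)\le j<J_0(m+1)$; then $m(j)\to\infty$. By construction $\rho_j\ge m(j)$, so $\rho_j\to\infty$. By \eqref{eq:mesoscopic-position}, $\rho_j/r_j<1/m(j)$ and $d_{N,\infty}(\sigma_{r_j}^N(c_j),p)<1/m(j)$, which gives \eqref{eq:geometric-limits}. Now fix $n,g\in N$. Once $m(j)$ is large enough, both lie in $E_{m(j)}=F_{m(j)}$, and then \eqref{eq:mesoscopic-cocycle} and \eqref{eq:mesoscopic-folner} bound the relevant errors by $1/m(j)\to0$. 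Since $|c_jB_{\rho_j}|=|B_{\rho_j}|$ by left translation, normalizing by $|\Omega_j|$ is the same as normalizing by $|B_{\rho_j}|$. This gives \eqref{eq:concrete-realization} and \eqref{eq:concrete-folner}.

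\emph{The main point to check.} The only delicate issue is that the lemma's conclusion holds for \emph{every} $j\ge J_0$ with the given data, and this is exactly what makes the block-wise definition legitimate. The lemma's statement ``for every $j\ge J_0$ there are $c_j,\rho_j$'' allows $c_j,\rho_j$ to depend on the data $(E,F,\varepsilon,R_0)$. Within each block we use only one fixed set of data, so the construction is consistent. No passage to a subsequence of $r_j$ is needed, which matters because the statement is phrased along the full sequence $r_j$ that produces $\Psi_\infty$.
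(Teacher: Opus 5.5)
Your proposal is correct and is essentially the paper's own proof. The paper likewise enumerates $N$ and applies Lemma \ref{lem:mesoscopic-approximation} with $E=F=\{n_1,\dots,n_k\}$, $\varepsilon=k^{-1}$, $R_0=k$; it makes the thresholds $J_k$ strictly increasing and sets $c_j,\rho_j$ from the data indexed by $m(j)=\max\{k:J_k\le j\}$, with your observation $|c_jB_{\rho_j}|=|B_{\rho_j}|$ being a detail the paper leaves implicit.
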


\begin{proof}
Enumerate $N=\{n_1,n_2,\ldots\}$. For each $k\ge1$, take $E=F=\{n_1,\ldots,n_k\}$, $\varepsilon=k^{-1}$ and $R_0=k$ in Lemma \ref{lem:mesoscopic-approximation}. Then, there exists $J_k\ge 1$ such that for all $j\ge J_k$ there exists $c_{k,j}\in N$ and $\rho_{k,j}\ge k$ such that 
\begin{align*}
\max_{n\in E}
\left\|\frac1{|\Omega_{k,j}|}\sum_{x\in\Omega_{k,j}}\Phi_n(x)
-D^1\pi_N(n)\right\|&<\varepsilon,
\\
\max_{g\in F}\frac{|\Omega_{k,j} g\mathbin\triangle\Omega_{k,j}|}{|\Omega_{k,j}|}
&<\varepsilon,\\
 \frac{\rho_{k,j}}{r_j}<\varepsilon,
 \qquad
 d_{N,\infty}(\sigma_{r_j}^N(c_{k,j}),p)&<\varepsilon.
\end{align*}
where $\Omega_{k,j}=c_{k,j}B_{\rho_{k,j}}$. Without loss of generality, we can choose $(J_k)$ to be strictly increasing. For $j\ge J_1$, put $m(j):=\max\{k:J_k\le j\}$ and define $c_j:=c_{m(j),j},\rho_j:=\rho_{m(j),j}$ while choosing the finitely many earlier terms arbitrarily. Since $m(j)\to\infty$ as $j\to\infty$, \eqref{eq:mesoscopic-position} gives \eqref{eq:geometric-limits} and $\rho_j\ge m(j)\to\infty$. Every fixed $n,g\in N$ lies among $n_1,\ldots,n_{m(j)}$ for all sufficiently large $j$, so \eqref{eq:mesoscopic-cocycle} and \eqref{eq:mesoscopic-folner} give the remaining limits.
\end{proof}

Let $\mathscr D_1(\Psi)$ denote the set of $D_p^1\Psi_\infty$ over all fixed-basepoint blow-downs $\Psi_\infty$ of $\Psi$ and all their Pansu differentiability points $p$.

\begin{corollary}[$\mathscr D_1(\Psi)$ are invariant-mean linearizations]\label{cor:mean-realization}
Every element of $\mathscr D_1(\Psi)$ is an invariant-mean linearization. Consequently
$$
 \overline{\operatorname{conv}}\mathscr D_1(\Psi)
 \subseteq\mathcal A(\Psi)
 \subseteq
 \bigl\{L:\ell_{S_M}(Lv)\le\Lambda\ell_{S_N}(v)
                  \text{ for all }v\in V_N\bigr\}.
$$
In particular, $\mathcal A(\Psi)$ contains a linear isomorphism $L$ with
$$
 \Lambda^{-1}\ell_{S_N}(v)
 \le\ell_{S_M}(Lv)
 \le\Lambda\ell_{S_N}(v)
 \qquad(v\in V_N).
$$
\end{corollary}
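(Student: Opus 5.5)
The plan is to turn the F{\o}lner averages of \Cref{thm:translated-ball-realization} into a right-invariant mean by a weak-$*$ limit, and then to assemble the three stated inclusions from facts already proved.

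Fix $L=D_p^1\Psi_\infty\in\mathscr D_1(\Psi)$. \Cref{thm:translated-ball-realization} supplies centers $c_j$ and radii $\rho_j\to\infty$ with $\Omega_j=c_jB_{\rho_j}$. For $f\in\ell^\infty(N)$ set
$$
\mathsf A_j[f]:=\frac1{|\Omega_j|}\sum_{x\in\Omega_j}f(x).
$$
Each $\mathsf A_j$ is a positive normalized functional. By Banach--Alaoglu, the sequence has a weak-$*$ cluster point $\mathsf A$; we may take a subnet, or equivalently an ultralimit along a nonprincipal ultrafilter on $\NN$. Positivity and normalization pass to the limit.

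Right-invariance follows from \eqref{eq:concrete-folner}. For $g\in N$ we have
$$
\bigl|\mathsf A_j[f(\cdot g)]-\mathsf A_j[f]\bigr|\le\|f\|_\infty\frac{|\Omega_jg\mathbin\triangle\Omega_j|}{|\Omega_j|}\to0,
$$
because $\sum_{x\in\Omega_j}f(xg)=\sum_{y\in\Omega_jg}f(y)$. Hence $\mathsf A$ is a right-invariant mean.

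Since $\Phi_n$ is bounded and vector-valued, apply the limit componentwise. By \eqref{eq:concrete-realization} the scalar sequences $\mathsf A_j[\beta\circ\Phi_n]$ converge genuinely, so every cluster point sees the same limit. This gives $\mathsf A[\Phi_n]=D_p^1\Psi_\infty\pi_N(n)$ for all $n$. Thus $L_{\mathsf A}$ and $L$ agree on $\pi_N(N)$, which spans $V_N$, so $L_{\mathsf A}=L$.

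The inclusions then follow. \Cref{lem:linearization-compact-convex} says $\mathcal A(\Psi)$ is convex and closed, so $\mathscr D_1(\Psi)\subseteq\mathcal A(\Psi)$ gives $\overline{\operatorname{conv}}\mathscr D_1(\Psi)\subseteq\mathcal A(\Psi)$. The right-hand inclusion is \Cref{prop:localization}.

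For the last claim, a blow-down exists by \Cref{lem:cone-map} applied to any $r_j\to\infty$. It has a differentiability point by \Cref{lem:first-layer-biLip}, since almost every point is one and $N_\infty$ has positive Haar measure. The resulting $D_p^1$ lies in $\mathcal A(\Psi)$ and satisfies \eqref{eq:first-layer-norm-bound}, so it is the required linear isomorphism.

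The only delicate point is that weak-$*$ limits are taken along nets rather than sequences. This causes no trouble because the relevant averages actually converge.
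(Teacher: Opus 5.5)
Your proposal is correct and follows the paper's proof essentially step for step. You take a weak-$*$ cluster point of the F{\o}lner averages from \Cref{thm:translated-ball-realization}, which is a right-invariant mean with $L_{\mathsf A}=D_p^1\Psi_\infty$, and you assemble the inclusions and the final claim from \Cref{lem:linearization-compact-convex}, \Cref{prop:localization}, \Cref{lem:cone-map} and \Cref{lem:first-layer-biLip}, exactly as the paper does.
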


\begin{proof}
Let $c_j,\rho_j$ be as in Theorem \ref{thm:translated-ball-realization} and define a sequence of positive linear functionals $(A_j)$ on $\ell^\infty(N)$ as follows: 
$$
A_j(f)=\frac{1}{|c_jB_{\rho_j}|}\sum_{x\in c_jB_{\rho_j}}f(x) \quad (f\in\ell^\infty(N)).
$$
Then, $(A_j)$ lies in the closed unit ball of $\ell^\infty(N)^\ast$ which is weak-* compact by Banach-Alaoglu theorem. Hence, there exists a weak-* convergent subnet of $(A_j)$ with limit $\mathsf A$. $\mathsf A$ is right invariant due to \eqref{eq:concrete-folner}, while \eqref{eq:concrete-realization} gives $L_{\mathsf A}=D_p^1\Psi_\infty$. Since $\mathcal A(\Psi)$ is compact and convex, it contains the closed convex hull of $\mathscr D_1(\Psi)$ and the other inclusion is Proposition \ref{prop:localization}. Finally, Lemma \ref{lem:cone-map} supplies a blow-down, Pansu's theorem supplies a differentiability point, and Lemma \ref{lem:first-layer-biLip} gives the two-sided bound and invertibility.
\end{proof}

The first layer map of a graded homomorphism between Carnot groups determines the homomorphism, since the first layer generates the Lie algebra. We use the following corresponding rigidity fact for Lipschitz maps.

\begin{lemma}[Constant Pansu differential]
\label{lem:constant-pansu-differential}
Let $G,H$ be Carnot groups and let $F:G\to H$ be Lipschitz. Suppose that there is a linear map $L:V_1^G\to V_1^H$ such that
$$
 D_p^1F=L
$$
for almost every $p\in G$. Then $L$ extends to a graded homomorphism $\widehat L:G\to H$, and
$$
 F(x)=F(e)\widehat L(x)
 \qquad(x\in G).
$$
\end{lemma}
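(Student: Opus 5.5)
First, $L$ extends to a graded homomorphism. Pansu's theorem gives differentiability points, and Lemma~\ref{lem:first-layer-biLip}, whose argument only uses that $D_pF$ is a graded homomorphism, shows that $L=D^1_pF$ is the first layer of the graded homomorphism $D_pF$. A graded homomorphism is determined by its first layer, because $V_1^G$ generates $\mathfrak g$. Hence $D_pF$ is the same graded homomorphism $\widehat L$ for almost every $p$. Replacing $F$ by $F(e)^{-1}F$, we may assume $F(e)=e$, and must show $F=\widehat L$.

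The plan is to compare $F$ with $\widehat L$ along horizontal lines. Fix $v\in V_1^G$ and consider $\gamma_x(s)=x\exp(sv)$. By Fubini in exponential coordinates adapted to the one-parameter subgroup $\exp(\RR v)$, the right translates by $\exp(sv)$ foliate $G$ and Haar measure disintegrates along these lines. So for almost every $x$, the point $\gamma_x(s)$ is a differentiability point with $D F=\widehat L$ for almost every $s$. Along such a line, $s\mapsto F(\gamma_x(s))$ is a Lipschitz curve in $H$, hence horizontal, and differentiable almost everywhere. At a differentiability point of $F$ on the line, its left-translated velocity is
\[
\frac{d}{dh}\Big|_{h=0}F(\gamma_x(s))^{-1}F(\gamma_x(s)\exp(hv))=Lv,
\]
by the definition of the Pansu differential with $g=\exp(v)$, $t=h$. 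Thus $F\circ\gamma_x$ is a horizontal curve with constant left-translated velocity $Lv$. Uniqueness of solutions of the left-invariant ODE (the curve is absolutely continuous in coordinates) gives
\[
F(x\exp(sv))=F(x)\exp(sLv)=F(x)\widehat L(\exp(sv)).
\]
By continuity of $F$, this extends from almost every $x$ to every $x$.

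Finally, iterate. Every $g\in G$ is a finite product $\exp(v_1)\cdots\exp(v_m)$ with $v_i\in V_1^G$, since the first layer generates $G$. Applying the identity $m$ times from $x=e$ gives $F(g)=\widehat L(\exp v_1)\cdots\widehat L(\exp v_m)=\widehat L(g)$. This also shows directly that $\widehat L$ is a genuine extension of $L$.

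The main obstacle is the Fubini step: showing that almost every horizontal line $x\exp(\RR v)$ meets the full-measure set of good points in a set of full one-dimensional measure. I would handle this by writing $G\cong\exp(W)\times\RR$ via $(w,s)\mapsto\exp(w)\exp(sv)$, for a complement $W$ of $\RR v$ in $\mathfrak g$. This map is a diffeomorphism carrying Haar measure to a measure equivalent to Lebesgue measure, since right translation by $\exp(sv)$ preserves Haar measure in unimodular $G$. The null set therefore has null slices for almost every $w$. A secondary issue is that the Lipschitz curve $F\circ\gamma_x$ is absolutely continuous in exponential coordinates, which is needed for ODE uniqueness; this holds because Lipschitz curves in Carnot groups are locally Lipschitz in coordinates on compacts.
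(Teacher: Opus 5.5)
Your proof follows the paper's argument step for step. The almost-everywhere differentials are the single graded homomorphism $\widehat L$ determined by $L$. Along almost every horizontal line $x\exp(sv)$, the curve $s\mapsto F(x\exp(sv))$ has left-translated velocity $Lv$ almost everywhere, so ODE uniqueness gives $F(x\exp(sv))=F(x)\exp(sLv)$. Continuity extends this to all $x$, and iterating over a product of first-layer exponentials gives $F=F(e)\widehat L$.

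The one thing to correct is your justification of the Fubini step. For an arbitrary complement $W$ of $\RR v$, the map $(w,s)\mapsto\exp(w)\exp(sv)$ need not be a diffeomorphism. Take the Heisenberg algebra with $[X,Y]=Z$, $v=X$ and $W=\operatorname{span}\{Y,X+Z\}$. Then $\exp(\beta Y+\gamma(X+Z))\exp(sX)=\exp\bigl((\gamma+s)X+\beta Y+(\gamma-\tfrac12\beta s)Z\bigr)$. Its Jacobian in exponential coordinates is $1+\beta/2$, which vanishes at $\beta=-2$. Points with $Y$-coordinate $-2$ have either no preimage or infinitely many.

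The chart does work if you choose $W=\ker\lambda$ for a functional with $\lambda(v)=1$ and $\lambda([\mathfrak g,\mathfrak g])=0$. Then $\lambda(\log(\exp(w)\exp(sv)))=s$, so $s=\lambda(\log g)$ is forced and the inverse is smooth. Also, in the chart version, equivalence with Lebesgue measure comes from smoothness of the chart, not from right-invariance as you state.

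The paper's route is simpler and needs no chart. For a Haar-null set $Z$, Tonelli and right-invariance of Haar measure give $\int_G\int_0^T\mathbf 1_Z(x\exp(sv))\,ds\,d\mu(x)=\int_0^T\mu(Z\exp(-sv))\,ds=0$.
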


\begin{proof}
At almost every $p$, $D_pF$ is a graded homomorphism and is uniquely determined by its restriction to the first layer. Thus the differentials in the hypothesis equal one graded homomorphism $\widehat L$.

Fix $X\in V_1^G$. Right translations preserve Haar measure, so Fubini's theorem on bounded $t$-intervals shows that, for Haar-almost every $q$, the point $q\exp(tX)$ is a differentiability point with differential $\widehat L$ for almost every $t$. The curve $\gamma_q:t\mapsto F(q\exp(tX))$ is Lipschitz and hence absolutely continuous in Lie group coordinates on bounded intervals. For every such $q$, Pansu differentiability gives, for almost every $t$,
$$
\left(L_{\gamma_q(t)^{-1}}\right)_\ast\dot\gamma_q(t)=L(X).
$$
Thus $\dot\gamma_q(t)=(L_{\gamma_q(t)})_\ast L(X)$ almost everywhere, with $\gamma_q(0)=F(q)$. Uniqueness for absolutely continuous solutions of this smooth left-invariant differential equation gives $\gamma_q(t)=F(q)\exp(tL(X))$ for every $t$. The set of such $q$ is dense; continuity of $F$ and of multiplication extends this identity to every $q$. Consequently, we have
\begin{equation}\label{eq:uniqueness_IVP}
 F(q\exp(tX))=F(q)\exp(tL(X))
\end{equation}
for every $q$ and $t$.

Let $x\in G$. By the Chow--Rashevsky theorem, write $x=\prod_{i=1}^k\exp(X_i)$ with $X_i\in V_1^G$. Repeated application of \eqref{eq:uniqueness_IVP} gives
$$
F(x)=F(e)\prod_{i=1}^k\exp(L(X_i))=F(e)\widehat L(x).
$$

\end{proof}

\begin{theorem}[Fixed-basepoint singleton rigidity]
\label{thm:singleton-rigidity}
If $\mathcal A(\Psi)=\{L\}$, then $L$ extends to a graded $\Lambda$-bi-Lipschitz isomorphism $\widehat L:N_\infty\to M_\infty$, and every fixed-basepoint blow-down of $\Psi$ equals $\widehat L$.
\end{theorem}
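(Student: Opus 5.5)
Let $\Psi_\infty$ be any fixed-basepoint blow-down of $\Psi$, obtained along some $r_j\to\infty$ via Lemma \ref{lem:cone-map}. The aim is to show that $\Psi_\infty=\widehat L$, where $\widehat L$ is the graded homomorphism extending $L$. By Pansu's Rademacher theorem (see Lemma \ref{lem:first-layer-biLip}), almost every $p\in N_\infty$ is a Pansu differentiability point of $\Psi_\infty$. At each such point, Corollary \ref{cor:mean-realization} shows that $D^1_p\Psi_\infty$ belongs to $\mathscr D_1(\Psi)\subseteq\mathcal A(\Psi)=\{L\}$, so $D_p^1\Psi_\infty=L$ for almost every $p$.

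Since $\Psi_\infty$ is $\Lambda$-bi-Lipschitz, and in particular Lipschitz, Lemma \ref{lem:constant-pansu-differential} applies with $G=N_\infty$, $H=M_\infty$ and $F=\Psi_\infty$. It shows that $L$ extends to a graded homomorphism $\widehat L:N_\infty\to M_\infty$ with $\Psi_\infty(x)=\Psi_\infty(e)\widehat L(x)$. The normalization $\Psi_\infty(e)=e$ from Lemma \ref{lem:cone-map} then gives $\Psi_\infty=\widehat L$. The blow-down $\Psi_\infty$ was arbitrary, so every fixed-basepoint blow-down equals this single map $\widehat L$. Lemma \ref{lem:cone-map} guarantees that at least one blow-down exists, so this statement is not vacuous, and $\widehat L$ is uniquely determined by $L$ because the first layer generates the Lie algebra.

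It remains to show that $\widehat L$ is a graded $\Lambda$-bi-Lipschitz isomorphism. Here $\widehat L$ coincides with a blow-down, which Lemma \ref{lem:cone-map} says is a $\Lambda$-bi-Lipschitz homeomorphism. A bijective graded homomorphism is a graded isomorphism, and its bi-Lipschitz constant is inherited from $\Psi_\infty$. Alternatively, one can compute directly: $\widehat L$ is its own Pansu differential at every point, so Lemma \ref{lem:first-layer-biLip} shows that it is a graded isomorphism and satisfies \eqref{eq:full-differential-biLip}, which by left invariance and the homomorphism property gives the bi-Lipschitz bound between arbitrary pairs of points.

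The only real content is the identification of almost every differential with $L$, which rests entirely on Corollary \ref{cor:mean-realization}. The one point I would check carefully is the measure-theoretic hypothesis of Lemma \ref{lem:constant-pansu-differential}. That lemma requires the first-layer differential to equal $L$ only almost everywhere, and the set of differentiability points has full measure by Pansu's theorem, so no further obstacle is expected.
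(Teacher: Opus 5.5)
Your proposal is correct and matches the paper's proof. Corollary \ref{cor:mean-realization} forces $D^1_p\Psi_\infty=L$ at almost every point, and Lemma \ref{lem:constant-pansu-differential} together with $\Psi_\infty(e)=e$ then gives $\Psi_\infty=\widehat L$, which inherits the $\Lambda$-bi-Lipschitz isomorphism property from the blow-down. Your explicit appeal to Lemma \ref{lem:cone-map} for the existence of at least one blow-down makes precise a step the paper leaves implicit.
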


\begin{proof}
Let $\Psi_\infty$ be any fixed-basepoint blow-down. By Corollary \ref{cor:mean-realization}, every first-layer Pansu differential of $\Psi_\infty$ belongs to $\mathcal A(\Psi)$ and hence equals $L$. Pansu differentiability holds almost everywhere, so Lemma \ref{lem:constant-pansu-differential} and $\Psi_\infty(e)=e$ give $\Psi_\infty=\widehat L$. Since each fixed-basepoint blow-down is $\Lambda$-bi-Lipschitz, so is $\widehat L$.
\end{proof}

\begin{proof}[Proof of \Cref{thm:main-attainment}]
The first part is due to Theorem \ref{thm:translated-ball-realization}. Compactness and convexity of $\mathcal A(\Psi)$ is by Lemma \ref{lem:linearization-compact-convex}. Proposition \ref{prop:bounded-distance-invariance} shows that $\mathcal A(\Psi)$ is unchanged when $\Psi$ is replaced by a map at bounded distance from it, and \eqref{eq:intro-sandwich} is by Corollary \ref{cor:mean-realization}. The final assertion is Theorem \ref{thm:singleton-rigidity}.
\end{proof}

\section{Pointed blow-downs and exact convexification}
\label{sec:pointed-blowdowns}

The blow-downs considered so far keep the identity as basepoint. We now allow the basepoint to move. This records the large-scale behavior of $\Psi$ uniformly over the whole group. With this enlarged family, the one-sided fixed-basepoint statement in \eqref{eq:intro-sandwich} becomes an exact convex-hull formula.

For $c\in N$, define the normalized translate
\begin{equation}\label{eq:normalized-translate}
 \Psi^c(x):=\Psi(c)^{-1}\Psi(cx),\qquad x\in N.
\end{equation}
Every $\Psi^c$ fixes the identity and is a $(\Lambda,A)$-quasi-isometry with the same coarse-surjectivity constant as $\Psi$. Its displacement functions are
\begin{equation}\label{eq:translated-displacement}
 \Phi_n^c(x)
 :=\pi_M\bigl(\Psi^c(x)^{-1}\Psi^c(xn)\bigr)
 =\Phi_n(cx) \quad(n\in N).
\end{equation}

For $c\in N$ and $r>0$, put
$$
 D_r^N:=\sigma_r^N(N),\qquad D_r^M:=\sigma_r^M(M),
$$
and define 
\begin{equation}\label{eq:rescaled-normalized-translate}
 \Psi_{c,r}:=
 \sigma_r^M\circ\Psi^c\circ(\sigma_r^N)^{-1}
 :D_r^N\longrightarrow D_r^M.
\end{equation}
Thus, for $n\in N$,
\begin{equation}\label{eq:rescaled-normalized-translate-explicit}
 \Psi_{c,r}\bigl(\sigma_r^N(n)\bigr)
 =\sigma_r^M\bigl(\Psi(c)^{-1}\Psi(cn)\bigr).
\end{equation}

\begin{definition}[Pointed blow-down]\label{def:pointed-blowdown}
A continuous map $F:N_\infty\to M_\infty$ is a \emph{pointed blow-down} of $\Psi$ if there are sequences $c_j\in N$ and $r_j\to\infty$ such that, for every fixed $R\ge1$,
\begin{equation}\label{eq:pointed-uniform-conv}
 \gamma_{j,F}(R):=
 \sup_{z\in D_{r_j}^N\cap\overline B_{N,\infty}(e,R)}
 d_{M,\infty}\bigl(\Psi_{c_j,r_j}(z),F(z)\bigr)
 \longrightarrow0.
\end{equation}
The set of all such maps is denoted by $\mathscr B_{\mathrm{pt}}(\Psi)$. In particular, $F(e)=e$, and taking $c_j=e$ recovers the fixed-basepoint blow-downs.

The associated set of first-layer differentials is
$$
 \mathscr D^1_{\mathrm{pt}}(\Psi)
 :=\bigl\{D_p^1F:
 F\in\mathscr B_{\mathrm{pt}}(\Psi),\quad
 p\text{ is a Pansu differentiability point of }F\bigr\}.
$$
Finally, $\mathscr G_{\mathrm{pt}}(\Psi)$ is the set of maps $L\in\Hom(V_N,V_M)$ for which there is a graded homomorphism $\widehat L:N_\infty\to M_\infty$ whose first-layer map is $L$ and such that $\widehat L\in\mathscr B_{\mathrm{pt}}(\Psi)$.
\end{definition}

For any prescribed sequences $c_j\in N$ and $r_j\to\infty$, the proof of Lemma \ref{lem:cone-map} applies to the maps $\Psi^{c_j}$, which fix the identity and have common quasi-isometry and coarse-surjectivity constants. It therefore gives a subsequence satisfying \eqref{eq:pointed-uniform-conv}. In particular, pointed blow-downs exist, and every pointed blow-down is a $\Lambda$-bi-Lipschitz homeomorphism.

\begin{lemma}[Differentials of pointed blow-downs yield invariant mean linearizations]\label{lem:pointed-realization}
Every element of $\mathscr D^1_{\mathrm{pt}}(\Psi)$ belongs to $\mathcal A(\Psi)$.
\end{lemma}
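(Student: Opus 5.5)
The plan is to reduce to the fixed-basepoint case already treated in Section \ref{sec:attainment}, applied to the normalized translates $\Psi^{c}$, and then transport the resulting means back to $N$ by translation. Let $F\in\mathscr B_{\mathrm{pt}}(\Psi)$ be obtained along $c_j\in N$, $r_j\to\infty$, and let $p$ be a Pansu differentiability point of $F$. Rerun the proof of Lemma \ref{lem:mesoscopic-approximation} with $\Psi_j$ replaced by $\Psi_{c_j,r_j}$ and $\Psi_\infty$ replaced by $F$. The only inputs from $\Psi$ in that proof are three. First, the uniform convergence \eqref{eq:uniform-conv}, which is replaced by \eqref{eq:pointed-uniform-conv}. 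Second, the bound $\|\Phi_n\|_\infty\le\Lambda|n|_{S_N}+A$, which holds equally for $\Phi^{c_j}_n$ because each $\Psi^{c_j}$ is a $(\Lambda,A)$-quasi-isometry. Third, the cocycle identity \eqref{eq:cocycle-identity-abelianized-displacement-function}, which holds for $\Phi^{c_j}$ by \eqref{eq:translated-displacement}. All constants in the estimates \eqref{eq:telescoping-error}–\eqref{eq:main-window-estimate} depend only on $\Lambda,A,n,R,t$ and on the cone geometry, not on the particular map. So the same argument produces, for finite $E,F'\subseteq N$ and $\varepsilon>0$, points $x_j\in N$ and radii $\rho_j$ with
$$
\max_{n\in E}\Bigl\|\frac1{|B_{\rho_j}|}\sum_{x\in x_jB_{\rho_j}}\Phi_n(c_jx)-D_p^1F\,\pi_N(n)\Bigr\|<\varepsilon,
$$
together with the Følner bound for $x_jB_{\rho_j}$.

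Next, rewrite the sum over $x\in x_jB_{\rho_j}$ of $\Phi_n(c_jx)$ as the sum over $y\in c_jx_jB_{\rho_j}$ of $\Phi_n(y)$. The set $c_jx_jB_{\rho_j}$ is a left translate of a ball. Right Følner ratios are invariant under left translation, since $|c\Omega g\triangle c\Omega|=|\Omega g\triangle\Omega|$. Hence the translated windows $\Omega_j:=c_jx_jB_{\rho_j}$ satisfy \eqref{eq:mesoscopic-cocycle} and \eqref{eq:mesoscopic-folner} for $\Phi$ itself. Diagonalizing over an enumeration of $N$ exactly as in Theorem \ref{thm:translated-ball-realization} gives a single sequence of windows $\Omega_j$. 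The averages of $\Phi_n$ over $\Omega_j$ converge to $D^1_pF\,\pi_N(n)$ for every $n$, and the Følner ratios tend to $0$ for every $g$.

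Finally, as in Corollary \ref{cor:mean-realization}, take a weak-$*$ cluster point $\mathsf A$ of the normalized counting functionals on $\Omega_j$. The Følner property makes $\mathsf A$ right-invariant. Since each $\Phi_n$ is a fixed bounded function, the convergence of averages passes to the subnet, so $L_{\mathsf A}\pi_N(n)=D_p^1F\,\pi_N(n)$. Thus $D_p^1F\in\mathcal A(\Psi)$.

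The main obstacle is verifying that the proof of Lemma \ref{lem:mesoscopic-approximation} really is uniform over the family of maps $\Psi^{c_j}$, whose displacement functions change with $j$. The step to watch is the telescoping estimate \eqref{eq:telescoping-error}. It uses $\|\Phi^{c_j}_n\|_\infty$, which is bounded uniformly in $j$ by $\Lambda|n|+A$. So I expect this to go through, with that uniform bound substituted wherever $\|\Phi_n\|_\infty$ appears. One also needs $F$ to be $\Lambda$-Lipschitz in order to use it in \eqref{eq:discrete-cone-increment}, and this follows from the remark after Definition \ref{def:pointed-blowdown}.
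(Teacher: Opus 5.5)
Your proposal is correct and follows essentially the same route as the paper. You rerun the mesoscopic approximation for the translates $\Psi^{c_j}$ using the uniform bound $\|\Phi^{c_j}_n\|_\infty\le\Lambda|n|_{S_N}+A$, move the windows to $c_jx_jB_{\rho_j}$ via $\Phi^{c_j}_n(x)=\Phi_n(c_jx)$, diagonalize, and take a weak-$*$ cluster point. The only difference is that you translate the windows before the diagonal selection, while the paper diagonalizes first and then translates; this is immaterial, since left translation preserves right F\o lner ratios.
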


\begin{proof}
Let $c_j \in N$ and $r_j \to \infty$ be sequences along which $F_j := \Psi_{c_j,r_j} = \sigma_{r_j}^M \circ \Psi^{c_j} \circ (\sigma_{r_j}^N)^{-1}$ converge locally uniformly to $F \in \mathscr B_{\mathrm{pt}}(\Psi)$, and let $p \in N_\infty$ be a Pansu differentiability point of $F$. Write $D^1 := D_p^1 F$. Observe that $\|\Phi_n^{c_j}\|_\infty = \|\Phi_n\|_\infty \le \Lambda|n|_{S_N} + A$ for all $j$ and $n\in N$. 

We retrace the proof of Lemma \ref{lem:mesoscopic-approximation}, applying it to the sequence $\Psi^{c_j}$. Choose $x_j \in N$ such that $\sigma_{r_j}^N(x_j) \to p$. For $n \in N$, fix $R \ge \max\{1, 2|n|_{S_N}\}$ and $t \in (0, 1]$. Let $k_j = \lceil t r_j \rceil$, $\rho_j = \lceil R t r_j \rceil$ and $\Omega_j' = x_j B_{\rho_j}$. Exactly as in the proof of \eqref{eq:telescoping-error}, we have
$$
 \limsup_j \left\| \frac{1}{|\Omega_j'|} \sum_{x \in \Omega_j'} \Phi_n^{c_j}(x) - \frac{1}{|\Omega_j'|} \sum_{x \in \Omega_j'} \frac{1}{k_j} \Phi_{n^{k_j}}^{c_j}(x) \right\| 
 \le \frac{C_{Q_N} |n|_{S_N} \|\Phi_n\|_\infty}{R}.
$$
where $\|\cdot\|$ denotes the $\ell_{S_M}$ norm. Following the proof of \eqref{eq:main-window-estimate}, we get
$$
 \limsup_j \left\| \frac{1}{|\Omega_j'|}\sum_{x \in \Omega_j'} \Phi_n^{c_j}(x) - D^1 \pi_N(n) \right\| 
 \le \frac{C_{Q_N} |n|_{S_N} \|\Phi_n\|_\infty}{R} + 2\eta(t,4R),
$$
where $\eta(t,R)$ is as in \eqref{eq:Pansu-differentiability-uniform}.

Fix finite subsets $E,F\subseteq N$, $\varepsilon>0$ and $R_0\ge 1$. Then, as in the last part of the proof of Lemma \ref{lem:mesoscopic-approximation}, there is $J_0\ge 1$ such that for every $j\ge J_0$, there are $a_j(=x_j)\in N$ and integers $\rho_j\ge R_0$ so that with $\Omega_j'=a_jB_{\rho_j}$ such that 
\begin{align*}
\max_{n\in E}\left\|\frac1{|\Omega_j'|}\sum_{x\in\Omega_j'}\Phi_n^{c_j}(x)-D^1\pi_N(n)\right\|&<\varepsilon, \quad\max_{g\in F}\frac{|\Omega_j' g\mathbin\triangle\Omega_j'|}{|\Omega_j'|}<\varepsilon,\\
\frac{\rho_j}{r_j}<\varepsilon, \qquad d_{N,\infty}(\sigma_{r_j}^N(a_j),p)<\varepsilon.
\end{align*}

We now execute the diagonal selection as in Theorem \ref{thm:translated-ball-realization}. Enumerate $N = \{n_1, n_2, \ldots\}$ and set $E_k = F_k=\{n_1, \ldots, n_k\}, \varepsilon=k^{-1}$ and $R_0=k$ Following the proof of Theorem $\ref{thm:translated-ball-realization}$, there exist $b_j\in N$, integers $\rho'_j$ with $\Omega''_j=b_jB_{\rho'_j}$ such that for every $n,g\in N$,
\begin{equation}\label{eq:shifted-diagonal-convergence}
 \frac{1}{|\Omega_j''|} \sum_{x \in \Omega_j''} \Phi_n^{c_j}(x) \longrightarrow D^1 \pi_N(n), 
 \qquad \frac{|\Omega_j'' g \mathbin\triangle \Omega_j''|}{|\Omega_j''|} \longrightarrow 0.
\end{equation}
Now,
$$
 \frac{1}{|\Omega_j''|} \sum_{x \in b_j B_{\rho'_j}} \Phi_n^{c_j}(x) 
 = \frac{1}{|B_{\rho'_j}|} \sum_{x \in b_j B_{\rho'_j}} \Phi_n(c_j x) 
 = \frac{1}{|c_j b_j B_{\rho'_j}|} \sum_{w \in c_j b_j B_{\rho'_j}} \Phi_n(w).
$$
Let $\Omega_j = c_j b_j B_{\rho'_j}$, then $(\Omega_j)$ form a F\o lner sequence. To complete the proof, we now trace the proof of Corollary \ref{cor:mean-realization}. Let $A_j$ be the positive linear functional on $\ell^\infty(N)$ defined by averaging a bounded function on $\Omega_j$, and $\mathsf A$ be the limit of a weak-* convergent subnet $(A_{j(\alpha)})$ of $(A_j)$. Note that $\mathsf A$ is actually a right-invariant mean on $N$ since $(\Omega_j)$ is F\o lner. Observe that
$$
 L_{\mathsf A} \pi_N(n) 
 = \mathsf A[\Phi_n] 
 = \lim_\alpha \frac{1}{|\Omega_{j(\alpha)}|} \sum_{w \in \Omega_{j(\alpha)}} \Phi_n(w) 
 = D^1 \pi_N(n).
$$
Therefore, $D_p^1 F = L_{\mathsf A} \in \mathcal A(\Psi)$.
\end{proof}

\begin{lemma}[Comparing $\mathscr D^1_{\mathrm{pt}}(\Psi)$ and $\mathscr G_{\mathrm{pt}}(\Psi)$]\label{lem:retangent-pointed}
One has
$$
 \mathscr D^1_{\mathrm{pt}}(\Psi)=\mathscr G_{\mathrm{pt}}(\Psi).
$$
Moreover, $\mathscr G_{\mathrm{pt}}(\Psi)$ is compact.
\end{lemma}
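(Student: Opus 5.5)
The proof has three parts: the inclusion $\mathscr G_{\mathrm{pt}}\subseteq\mathscr D^1_{\mathrm{pt}}$, the reverse inclusion via re-tangenting, and compactness.

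\emph{The inclusion $\mathscr G_{\mathrm{pt}}(\Psi)\subseteq\mathscr D^1_{\mathrm{pt}}(\Psi)$.} If $\widehat L\in\mathscr B_{\mathrm{pt}}(\Psi)$ is a graded homomorphism, then it is Pansu differentiable at every point. At $p=e$ we have $\delta_{1/t}(\widehat L(\delta_t g))=\widehat L(g)$, so $D_e\widehat L=\widehat L$ and $D^1_e\widehat L=L$. Hence $L\in\mathscr D^1_{\mathrm{pt}}(\Psi)$.

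\emph{The reverse inclusion.} Let $F\in\mathscr B_{\mathrm{pt}}(\Psi)$ be obtained along $c_j,r_j$, and let $p$ be a Pansu differentiability point with $D^1_pF=L$. I will show that $D_pF=\widehat L$ is itself a pointed blow-down.

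First, $D_pF$ is the limit as $t\to0$ of $F^{p,t}(g):=\delta_{1/t}(F(p)^{-1}F(p\delta_t g))$, uniformly on compacts. Each $F^{p,t}$ is in turn approximated by rescaled normalized translates of $\Psi$. Choose $x_j\in N$ with $\sigma_{r_j}^N(x_j)\to p$, and take as new basepoint $c_jx_j$ and new scale $tr_j$. By \eqref{eq:rescaled-normalized-translate-explicit},
\[
\Psi_{c_jx_j,tr_j}(\sigma^N_{tr_j}(n))=\sigma^M_{tr_j}\bigl(\Psi(c_jx_j)^{-1}\Psi(c_jx_jn)\bigr)=\delta_{1/t}\sigma^M_{r_j}\bigl(\Psi^{c_j}(x_j)^{-1}\Psi^{c_j}(x_jn)\bigr).
\]
Using the locally uniform convergence of the transported products $*_r$ to the multiplication of the cone (as in the proof of Lemma~\ref{lem:mesoscopic-approximation}), together with $\sigma^N_{tr_j}=\delta_{1/t}\sigma^N_{r_j}$ and the uniform convergence $\Psi_{c_j,r_j}\to F$ on balls, I expect
\[
\limsup_j\ \sup_{z\in D^N_{tr_j}\cap\overline B(e,R)} d_{M,\infty}\bigl(\Psi_{c_jx_j,tr_j}(z),F^{p,t}(z)\bigr)=0
\]
for each fixed $t$ and $R$.

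Next I run a diagonal argument. Take $t=t_k\to0$ and choose $j_k$ increasing so that this error is below $1/k$ on $\overline B(e,k)$ and $t_kr_{j_k}\to\infty$. Since $\sup_{\overline B(e,k)}d(F^{p,t_k},\widehat L)\to0$ by the definition of differentiability (combined with a standard exhaustion over $R$), the maps $\Psi_{c_{j_k}x_{j_k},t_kr_{j_k}}$ converge to $\widehat L$ in the sense of \eqref{eq:pointed-uniform-conv}. Thus $\widehat L\in\mathscr B_{\mathrm{pt}}(\Psi)$, so $L\in\mathscr G_{\mathrm{pt}}(\Psi)$.

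The main obstacle is the error bookkeeping. The factor $\delta_{1/t}$ magnifies errors by $1/t$, so $t$ must be held fixed while $j\to\infty$, and only then sent to $0$. The left-translation by $\Psi^{c_j}(x_j)^{-1}$ must also be controlled through the $*_r$ convergence on a fixed compact set. That set contains $F(p)$ and $F(\overline B(p,tR))$, which is bounded because $F$ is $\Lambda$-Lipschitz.

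\emph{Compactness.} $\mathscr G_{\mathrm{pt}}(\Psi)$ is bounded, since every graded blow-down is $\Lambda$-bi-Lipschitz and so its first layer satisfies the two-sided norm bound, as in Lemma~\ref{lem:first-layer-biLip}. For closedness, take $L_k\to L$ with $\widehat L_k\in\mathscr B_{\mathrm{pt}}(\Psi)$. Then $\widehat L_k\to\widehat L$ locally uniformly, because the higher layers are polynomial in the first layer. The same diagonal extraction, choosing for each $k$ a basepoint and scale at which $\Psi$ is $1/k$-close to $\widehat L_k$ on $\overline B(e,k)$ with scale exceeding $k$, realizes $\widehat L$ as a pointed blow-down.
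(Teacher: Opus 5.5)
Your proposal is correct and follows the paper's proof essentially step by step. You use the trivial inclusion via $D_e\widehat L=\widehat L$, and a re-tangenting argument that identifies the discrete tangent map at basepoint $c_jx_j$ and scale $tr_j$ with $\Psi_{c_jx_j,tr_j}$, exactly the paper's computation of $\mathcal T_{j,a,k}$, followed by a diagonal extraction in $(t_k,j_k)$. Compactness comes from the $\Lambda$-bi-Lipschitz bound together with a diagonal argument for closedness; the only cosmetic difference is that you fix one sequence $x_j$ with $\sigma^N_{r_j}(x_j)\to p$ up front rather than choosing the points $a_k$ along the diagonal.
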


\begin{proof}
Let $c_j \in N$ and $r_j \to \infty$ be sequences along which $F_j := \sigma_{r_j}^M \circ \Psi^{c_j} \circ (\sigma_{r_j}^N)^{-1}$ converge locally uniformly to $F \in \mathscr B_{\mathrm{pt}}(\Psi)$ as in Definition \ref{def:pointed-blowdown}. Suppose that $p \in N_\infty$ is a Pansu differentiability point of $F$ with first-layer differential $D_p^1F = L$. By definition of Pansu differentiability, the map
$$
 T_{p,t}F(z) := \delta_{1/t}\bigl(F(p)^{-1}F(p\delta_tz)\bigr)
$$
converges uniformly to the graded homomorphism $\widehat L = D_pF$ as $t \to 0^+$ on every compact subset of $N_\infty$.

We will construct a discrete analogue of this map. Choose a decreasing sequence $(t_k)$ converging to $0$ such that $T_{p,t_k}F$ is within $1/k$ of $\widehat L$ on $B_{N,\infty}(e, k)$. Write $\operatorname{inv}_r^M$ for inversion with respect to the transported product $*_r^M$. For fixed $k,j$, and an element $a \in N$, set $q = \sigma_{r_j}^N(a)$ and define 
\begin{align*}
 \mathcal T_{j,a,k}(z) := \delta_{1/t_k}\Bigl(& \operatorname{inv}_{r_j}^M(F_j(q))*_{r_j}^M F_j\bigl(q*_{r_j}^N\delta_{t_k}z\bigr) \Bigr) \quad(z\in\sigma_{t_k r_j}^N(N)).
\end{align*}
Let $z = \sigma_{t_k r_j}^N(x)$, $x\in N$. Then $\delta_{t_k} z = \sigma_{r_j}^N(x)$, which gives
$$
 q *_{r_j}^N \delta_{t_k} z = \sigma_{r_j}^N(a) *_{r_j}^N \sigma_{r_j}^N(x) = \sigma_{r_j}^N(ax).
$$
Therefore, $F_j(q *_{r_j}^N \delta_{t_k} z)=\sigma_{r_j}^M\bigl(\Psi(c_j)^{-1}\Psi(c_j ax)\bigr)$. Now,
$$
 \operatorname{inv}_{r_j}^M(F_j(q)) = \operatorname{inv}_{r_j}^M \sigma_{r_j}^M\bigl(\Psi(c_j)^{-1}\Psi(c_j a)\bigr) = \sigma_{r_j}^M\bigl(\Psi(c_j a)^{-1}\Psi(c_j)\bigr).
$$
Since $\sigma_{r_j}^M\bigl(\Psi(c_j a)^{-1}\Psi(c_j)\bigr) *_{r_j}^M \sigma_{r_j}^M\bigl(\Psi(c_j)^{-1}\Psi(c_j ax)\bigr) = \sigma_{r_j}^M\bigl(\Psi(c_j a)^{-1}\Psi(c_j ax)\bigr)$, we have
$$
 \mathcal T_{j,a,k}(\sigma_{t_k r_j}^N(x)) = \sigma_{t_kr_j}^M\bigl( \Psi(c_ja)^{-1}\Psi(c_jax) \bigr).
$$
Because $t_k$ is fixed as $j \to \infty$, \eqref{eq:cone-density} guarantees we can choose $a_k \in N$ such that $q_k = \sigma_{r_{j(k)}}^N(a_k)$ converges to $p$. Because the original maps $F_j$, along with the transported products and inverses, converge uniformly on compact sets to their continuous counterparts, we can choose an increasing sequence of indices $j(k) > j(k-1)$ ensuring that $t_kr_{j(k)} \ge k$ and
\begin{equation}\label{eq:direct-retangent-comparison}
 \sup_{\substack{z\in\sigma_{t_kr_{j(k)}}^N(N)\\d_{N,\infty}(e,z)\le k}}
 d_{M,\infty}\bigl( \mathcal T_{j(k),a_k,k}(z), T_{p,t_k}F(z) \bigr) < \frac1k.
\end{equation}
By \eqref{eq:direct-retangent-comparison}, $\mathcal T_{j(k),a_k,k}$ converges locally uniformly to $\widehat L$. This proves that $\widehat L$ is a pointed blow-down, and therefore $\mathscr D^1_{\mathrm{pt}}(\Psi) \subseteq \mathscr G_{\mathrm{pt}}(\Psi)$. The reverse inclusion follows trivially because the Pansu differential of the graded homomorphism $\widehat L$ at the identity is precisely $\widehat L$, giving its first layer $L$.

We now prove that $\mathscr G_{\mathrm{pt}}(\Psi)$ is compact. By Lemma \ref{lem:first-layer-biLip}, every $L \in \mathscr G_{\mathrm{pt}}(\Psi)$ satisfies the stable-norm bounds $\Lambda^{-1}\ell_{S_N}(v) \le \ell_{S_M}(Lv) \le \Lambda\ell_{S_N}(v)$. Thus, $\mathscr G_{\mathrm{pt}}(\Psi)$ is bounded. 

To show closedness, let $L_i \to L$ be a convergent sequence with $L_i \in \mathscr G_{\mathrm{pt}}(\Psi)$. Let us fix a basis for each layer of the Lie algebra consisting of elements formed by iterated brackets of first-layer basis vectors. The convergence $L_i \to L$ guarantees that the bracket relations pass to the limit, yielding a graded homomorphism $\widehat L$ whose first layer map is $L$. 

Finally, we need to show that $\widehat L \in \mathscr B_{\mathrm{pt}}(\Psi)$. Because $L_i \in \mathscr G_{\mathrm{pt}}(\Psi)$, each corresponding graded homomorphism $\widehat L_i$ belongs to $\mathscr B_{\mathrm{pt}}(\Psi)$. By Definition \ref{def:pointed-blowdown}, this means that for each fixed $i \ge 1$, there exists sequences $c_{i,j} \in N$ and $r_{i,j} \to \infty$ (as $j \to \infty$) such that the maps
$$
 H_{i,j} := \sigma_{r_{i,j}}^M \circ \Psi^{c_{i,j}} \circ (\sigma_{r_{i,j}}^N)^{-1}
$$
converge to $\widehat L_i$ as $j \to \infty$ locally uniformly.

For each $i$, we restrict our attention to $B_{N,\infty}(e, i)$. Because $H_{i,j} \to \widehat L_i$ uniformly on this bounded set as $j \to \infty$, we can choose a sufficiently large index $j(i)$ such that $r_{i,j(i)} \ge i$ and 
\begin{equation}\label{eq:diagonal-selection}
 \sup_{z \in \sigma_{r_{i,j(i)}}^N(N) \cap B_{N,\infty}(e, i)}
 d_{M,\infty}\bigl(H_{i,j(i)}(z), \widehat L_i(z)\bigr) < \frac{1}{i}.
\end{equation}

We define $\tilde c_i := c_{i, j(i)}$, $\tilde r_i := r_{i, j(i)}$, and $\tilde F_i := H_{i, j(i)}$. Since $\tilde r_i \ge i$, we have $\tilde r_i \to \infty$ as $i\to\infty$. We now verify that $\tilde F_i$ converges locally uniformly to $\widehat L$. Fix an arbitrary radius $R \ge 1$ and $\varepsilon > 0$. Because $\widehat L_i \to \widehat L$ locally uniformly, there exists an integer $I_1 \ge 1$ such that for all $i \ge I_1$, 
$$
 \sup_{z \in B_{N,\infty}(e, R)} d_{M,\infty}\bigl(\widehat L_i(z), \widehat L(z)\bigr) < \frac{\varepsilon}{2}.
$$
Choose an integer $I > \max\{R, I_1, 2/\varepsilon\}$. Consider any $i \ge I$ and any point $z \in \sigma_{\tilde r_i}^N(N) \cap B_{N,\infty}(e, R)$. Because $i \ge I > R$, the point $z$ is contained within the larger ball $B_{N,\infty}(e, i)$, meaning the bound from \eqref{eq:diagonal-selection} applies. By the triangle inequality,
\begin{align*}
 d_{M,\infty}\bigl(\tilde F_i(z), \widehat L(z)\bigr) 
 &\le d_{M,\infty}\bigl(\tilde F_i(z), \widehat L_i(z)\bigr) + d_{M,\infty}\bigl(\widehat L_i(z), \widehat L(z)\bigr) \\
 &< \frac{1}{i} + \frac{\varepsilon}{2}.
\end{align*}
Since $i > 2/\varepsilon$, we have $1/i < \varepsilon/2$. Substituting this yields
$$
 d_{M,\infty}\bigl(\tilde F_i(z), \widehat L(z)\bigr) < \frac{\varepsilon}{2} + \frac{\varepsilon}{2} = \varepsilon.
$$
Since this bound holds uniformly for all $z\in \sigma_{\tilde r_i}^N(N) \cap B_{N,\infty}(e, R)$, $\tilde F_i$ converges locally uniformly to $\widehat L$. 

Therefore, $\widehat L \in \mathscr B_{\mathrm{pt}}(\Psi)$ and $L \in \mathscr G_{\mathrm{pt}}(\Psi)$, proving that $\mathscr G_{\mathrm{pt}}(\Psi)$ is closed. Since it was already shown to be bounded, $\mathscr G_{\mathrm{pt}}(\Psi)$ is compact.
\end{proof}

Fix elements $s_1,\ldots,s_d\in N$ for which $v_i:=\pi_N(s_i)$ form a basis of $V_N$, and define the bounded map
\begin{equation}\label{eq:def-J}
 J:N\longrightarrow\Hom(V_N,V_M),
 \qquad J(x)v_i:=\Phi_{s_i}(x).
\end{equation}

Let $\mu_N$ be the Haar measure on $N_\infty$ and put $\mathbb B:=B_{N,\infty}(e,1)$.  

\begin{lemma}\label{lem:discrete-weak-derivatives}
Suppose that $c_j\in N$, $r_j\to\infty$, and the corresponding maps $\Psi_{c_j,r_j}$ in \eqref{eq:rescaled-normalized-translate} satisfy \eqref{eq:pointed-uniform-conv} for $F\in\mathscr B_{\mathrm{pt}}(\Psi)$. Then
\begin{equation}\label{eq:weak-gradient-ball}
 \frac1{|B_{r_j}|}\sum_{x\in B_{r_j}}J(c_jx)
 \longrightarrow
 \frac1{\mu_N(\mathbb B)}
 \int_{\mathbb B}D_p^1F\,d\mu_N(p).
\end{equation}
\end{lemma}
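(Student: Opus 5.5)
The plan is to prove the convergence one basis vector at a time. Fix $i$, write $s=s_i$, $v=v_i=\pi_N(s)$, and aim to show
$$
\frac1{|B_{r_j}|}\sum_{x\in B_{r_j}}\Phi_s(c_jx)\longrightarrow\frac1{\mu_N(\mathbb B)}\int_{\mathbb B}D^1_pF\,v\,d\mu_N(p).
$$

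\emph{Step 1: telescoping to a mesoscopic increment.} Fix $t\in(0,1]$ and set $k_j:=\lceil tr_j\rceil$. Since $\Phi^{c_j}_{s^{k}}(x)=\sum_{i<k}\Phi^{c_j}_s(xs^i)$, the same estimate that gave \eqref{eq:telescoping-error} applies here. The left side is then within
$$
\|\Phi_s\|_\infty\bigl(|B_{r_j+k_j|s|}|-|B_{r_j-k_j|s|}|\bigr)/|B_{r_j}|
$$
of the average over $x\in B_{r_j}$ of $\frac1{k_j}\Phi^{c_j}_{s^{k_j}}(x)$. By the volume asymptotics $|B_R|\sim c_{\mathrm{vol}}R^{Q_N}$, this error has $\limsup$ at most $C_{Q_N}|s|t\,\|\Phi_s\|_\infty$, which is $O(t)$.

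\emph{Step 2: passage to the cone map.} Exactly as in \eqref{eq:discrete-cone-increment}, I will use \eqref{eq:rescaled-normalized-translate-explicit}, the locally uniform convergence $*_{r}\to$ multiplication, and $\sigma_{r_j}(s^{k_j})\to\exp_\infty(tv)$. This gives, uniformly in $x\in B_{r_j}$,
$$
\frac1{k_j}\Phi^{c_j}_{s^{k_j}}(x)=\frac1t\Bigl(\pi_M^\infty F(q_x\exp_\infty(tv))-\pi_M^\infty F(q_x)\Bigr)+o(1),
\qquad q_x:=\sigma_{r_j}^N(x).
$$
Here I use $\pi_M=r\,\pi^\infty_M\circ\sigma^M_r$, the $1$-Lipschitz property of $\pi_M^\infty$, \eqref{eq:pointed-uniform-conv} on a fixed ball of radius $2$, and the fact that $F$ is $\Lambda$-Lipschitz. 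Put $g_t(q):=t^{-1}(\pi_M^\infty F(q\exp tv)-\pi_M^\infty F(q))$. This is a continuous function, bounded by $\Lambda\ell_{S_N}(v)$. The average from Step 1 therefore becomes $|B_{r_j}|^{-1}\sum_{x\in B_{r_j}}g_t(\sigma_{r_j}(x))+o(1)$.

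\emph{Step 3: Riemann sums.} I then need the equidistribution statement
$$
\frac1{|B_{r}|}\sum_{x\in B_{r}}g(\sigma_r(x))\to\frac1{\mu_N(\mathbb B)}\int_{\mathbb B}g\,d\mu_N
$$
for continuous $g$. I expect this to be the main obstacle. The plan is as follows. By \eqref{eq:coordinate-pansu-convergence}, $\sigma_r(B_r)$ lies in the $o(1)$-neighbourhood of $\overline{\mathbb B}$ and contains all points of $D_r^N$ that lie in $B_{N,\infty}(e,1-o(1))$. Since $\mu_N(\partial\mathbb B)=0$ (by homogeneity, spheres are null), it suffices to compare counting measure and Haar measure on small cone balls $B_{N,\infty}(y,\delta)\subset\mathbb B$. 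For such a ball, lift $y$ to $a\in N$ via \eqref{eq:cone-density}. The preimage then contains $aB_{(\delta-o(1))r}$ and is contained in $aB_{(\delta+o(1))r}$, again by \eqref{eq:coordinate-pansu-convergence} and left invariance. Hence the counting proportion is $\delta^{Q_N}+o(1)$, which equals $\mu_N(B(y,\delta))/\mu_N(\mathbb B)$. A finite almost-partition of $\mathbb B$ by such balls together with the uniform continuity of $g$ then gives the claim. A cleaner alternative I would try first is to use that $\sigma_r$ pushes counting measure, normalized by $r^{-Q_N}$ and the covolume of $N$ in $\widetilde N$, to a measure converging weakly to Haar. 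This holds because $\iota_N$ has polynomial Jacobian, and $\sigma_r$ carries the lattice $N$ to increasingly fine nets in the cone.

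\emph{Step 4: limit in $t$.} Steps 1--3 yield
$$
\limsup_j\Bigl\|\frac1{|B_{r_j}|}\sum_{x\in B_{r_j}}\Phi_s(c_jx)-\frac1{\mu_N(\mathbb B)}\int_{\mathbb B}g_t\,d\mu_N\Bigr\|\le Ct.
$$
At every Pansu differentiability point $p$, the estimate \eqref{eq:cone-increment} with $h=e$ gives $g_t(p)\to D^1_pF\,v$ as $t\to0$, and this holds almost everywhere. Since the $g_t$ are uniformly bounded, dominated convergence gives $\int_{\mathbb B}g_t\to\int_{\mathbb B}D^1_pF\,v\,d\mu_N(p)$. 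Letting $t\to0$ proves the limit for each $v_i$, and by linearity it proves \eqref{eq:weak-gradient-ball}.
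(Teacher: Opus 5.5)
Your proof is correct, but it takes a different route from the paper's.

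\textbf{The paper's route.} The paper works distributionally. For a test function $\eta\in C_c^\infty(N_\infty)$ it performs a discrete summation by parts that moves the one-step difference $x\mapsto xs$ onto $\eta$. It shows that $r_j\bigl(\eta(z_{j,xs^{-1}})-\eta(z_{j,x})\bigr)=-X_v\eta(z_{j,x})+O(r_j^{-1})$, using a Baker--Campbell--Hausdorff comparison of the transported product with the graded one. It then applies the fundamental-domain Riemann-sum formula \eqref{eq:lattice-riemann-sum}. Next it identifies $-\int (X_v\eta)\,u\,d\mu_N$ with $\int\eta\,D^1_pF(v)\,d\mu_N$ by integrating by parts on $N_\infty$ and using a.e. Pansu differentiability. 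Finally it sandwiches $\mathbf 1_{\mathbb B}$ between smooth cutoffs and compares lattice points in the cone ball with the word ball $B_{r_j}$.

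\textbf{Your route.} You reuse the telescoping and cone-increment estimates \eqref{eq:telescoping-error} and \eqref{eq:discrete-cone-increment} from \Cref{lem:mesoscopic-approximation}. This replaces $\Phi_s(c_jx)$ by the difference quotient $g_t$ of $\pi_M^\infty\circ F$ at a small fixed scale $t$, at the cost of an $O(t)$ error from thick annuli. Since $g_t$ is continuous, you only need equidistribution of $\sigma_{r_j}(B_{r_j})$ in $\mathbb B$. The passage $t\to0$ is then a pointwise a.e. statement plus dominated convergence.

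\textbf{What each buys.} Your route avoids the BCH/Taylor error analysis and the weak-derivative identification entirely, at the price of one extra limit in $t$. The paper's route proves something stronger: the measures $r_j^{-Q}\sum_x\Phi_s(c_jx)\,\delta_{z_{j,x}}$ converge against all test functions to $\kappa_N D^1_pF(v)\,d\mu_N$. This is the ``weak derivative'' in the lemma's name.

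\textbf{A precision point in Step 3.} Small balls cannot literally partition $\mathbb B$, so read ``almost-partition'' via a Vitali covering. Take finitely many disjoint closed balls lying compactly inside $\mathbb B$ and carrying all but $\epsilon$ of its measure. Their lattice preimages lie in $B_r$ for large $r$. Together they already account for a proportion at least $1-\epsilon-o(1)$ of $|B_r|$, so the leftover points of $\sigma_r(B_r)$ contribute at most $\epsilon\|g\|_\infty+o(1)$. Your ``cleaner alternative'' for this step is exactly the paper's fundamental-domain argument.
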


\begin{proof}
Fix a basis vector $v=v_i$ and write $s=s_i$. Put $Q:=Q_N$ and
$$
z_{j,x}:=\sigma_{r_j}^N(x),\qquad
b_j(x):=\Phi_s^{c_j}(x)=\Phi_s(c_jx),\qquad
u_j(z_{j,x}):=\frac1{r_j}\pi_M(\Psi^{c_j}(x)).
$$
The displacement bound gives
$$
\|b_j(x)\|\le M_s:=\Lambda|s|_{S_N}+A
\qquad(j\ge1,\ x\in N).
$$
Since $\pi_M$ is a homomorphism,
$$
b_j(x)=r_j\bigl(u_j(z_{j,xs})-u_j(z_{j,x})\bigr).
$$
Also, $u_j=\pi_M^\infty\circ\Psi_{c_j,r_j}$ on $D_{r_j}^N$. The first-layer projection is $1$-Lipschitz, so \eqref{eq:pointed-uniform-conv} implies
$$
\sup_{z\in D_{r_j}^N\cap\overline B_{N,\infty}(e,R)}
\|u_j(z)-u(z)\|\longrightarrow0,
\qquad
u:=\pi_M^\infty\circ F,
$$
for every fixed $R$. The map $u$ is $\Lambda$-Lipschitz.

We first establish the lattice Riemann-sum formula. Choose Haar measure $\nu$ on $\widetilde N$ so that $(\iota_N)_*\nu=\mu_N$. This normalization is possible because Haar measures have constant densities in exponential coordinates and $\iota_N$ is linear in those coordinates. Choose a relatively compact measurable fundamental domain $K$ for the left action of $N$ on $\widetilde N$, so that the sets $xK$, $x\in N$, partition $\widetilde N$ up to null sets. Put $v_0:=\nu(K)$ and $\kappa_N:=v_0^{-1}$.

The cells
$$
E_{r,x}:=\sigma_r^N(xK),\qquad x\in N,
$$
partition $N_\infty$ up to null sets. Since the dilation $\delta_{1/r}$ has Jacobian $r^{-Q}$,
$$
\mu_N(E_{r,x})=v_0r^{-Q}.
$$
Write $z_{r,x}:=\sigma_r^N(x)$. For $k\in K$,
$$
\sigma_r^N(xk)=z_{r,x}*_r^N\sigma_r^N(k).
$$
The points $\sigma_r^N(k)$ tend uniformly to $e$ for $k\in K$, and the transported products $*_r^N$ converge locally uniformly to the product of $N_\infty$. Consequently, for every compact set $C$,
$$
\sup_{\substack{x\in N\\z_{r,x}\in C}}
\sup_{z\in E_{r,x}}d_{N,\infty}(z,z_{r,x})
\longrightarrow0.
$$
If $E_{r,x}\cap C\neq \emptyset$ for some $r,x$, choose $k\in K$ such that $z=\sigma_r^N(xk)\in C$. Then
$$
z_{r,x}=z*_r^N\sigma_r^N(k^{-1}).
$$
Thus the anchors of all cells $E_{r,x}$ meeting $C$ remain in a fixed compact set. It follows that all cells meeting $C$, or having their anchors in $C$, lie together with their anchors in one compact set $C'$ for sufficiently large $r$. Every point of such a cell is within $\varepsilon_r$ of its anchor, where $\varepsilon_r\to0$.

Let $h\in C_c(N_\infty)$ and take $C=\operatorname{supp}h$. Then,
$$
v_0r^{-Q}\sum_{x\in N}h(z_{r,x})-\int_{N_\infty} h\,d\mu_N
=
\sum_{x\in N}\int_{E_{r,x}}
\bigl(h(z_{r,x})-h(z)\bigr)\,d\mu_N(z).
$$
If $\omega_h$ is a modulus of continuity for $h$ on $C'$, the norm of the right-hand side is at most $\mu_N(C')\omega_h(\varepsilon_r)$, which tends to zero. Therefore
\begin{equation}\label{eq:lattice-riemann-sum}
r^{-Q}\sum_{x\in N}h(\sigma_r^N(x))
\longrightarrow\kappa_N\int_{N_\infty}h\,d\mu_N.
\end{equation}
In particular, for every compact $C\subset N_\infty$,
\begin{equation}\label{eq:lattice-counting-bound}
r_j^{-Q}\#\{x\in N:z_{r_j,x}\in C\}=O_C(1).
\end{equation}
Indeed, apply \eqref{eq:lattice-riemann-sum} to a nonnegative continuous compactly supported function that is at least one on $C$.

Let $\eta\in C_c^\infty(N_\infty)$. Using the identity for $b_j$ and then changing variables $x\mapsto xs$ in the first sum gives
\begin{align}
r_j^{-Q}\sum_x\eta(z_{j,x})b_j(x)
&=r_j^{1-Q}\sum_x\eta(z_{j,x})
\bigl(u_j(z_{j,xs})-u_j(z_{j,x})\bigr)\notag\\
&=r_j^{-Q}\sum_x
r_j\bigl(\eta(z_{j,xs^{-1}})-\eta(z_{j,x})\bigr)u_j(z_{j,x}).
\label{eq:discrete-summation-by-parts}
\end{align}
Note that these sums are finite. We compute the finite difference in \eqref{eq:discrete-summation-by-parts}. Let
$$
X_v\eta(z):=\left.\frac{d}{dt}\right|_{t=0}
\eta(z\exp_\infty(tv)).
$$
Identify the Lie algebra of $\widetilde N$ with $\mathfrak n_\infty$ as in Section \ref{sec:preliminaries}. For $Z_a\in V_a^N$ and $W_b\in V_b^N$, let $\beta_{ab}^k(Z_a,W_b)$ denote the component in $V_k^N$ of their original bracket under this identification. For $r>0$, we define
$$
[Z_a,W_b]_r:=\sum_{k\ge a+b}r^{a+b-k}\beta_{ab}^k(Z_a,W_b).
$$
Let $P_r(Z,W)$ be the Baker--Campbell--Hausdorff polynomial for $[\cdot,\cdot]_r$, and let $P_\infty(Z,W)$ be the corresponding polynomial for the graded bracket on $\mathfrak n_\infty$. Collecting like powers of $r^{-1}$ gives the identity
$$
P_r(Z,W)=P_\infty(Z,W)+\sum_{h=1}^{H}r^{-h}P_h(Z,W),
$$
where $H$ is finite and the maps $P_h$ are independent of $r$ and described below.

Since $P_r(Z,0)=Z=P_\infty(Z,0)$ for every $r>0$,
$$
\sum_{h=1}^{H}r^{-h}P_h(Z,0)=0.
$$
For fixed $Z$, this is a polynomial identity in $r^{-1}$, so each coefficient vanishes: $P_h(Z,0)=0$.

Fix a Euclidean norm $\|\cdot\|$ on $\mathfrak n_\infty$, an orthonormal basis $E_1,\dots E_d$ of $\mathfrak n_\infty$ and a compact set $\mathcal K\subset\mathfrak n_\infty$. Write
$$
Z=\sum_{i=1}^d z_iE_i,\qquad W=\sum_{i=1}^d w_iE_i,\qquad
P_h(Z,W)=\sum_{i=1}^d p_{h,i}(z,w)E_i,
$$
where each $p_{h,i}$ is a polynomial in the $2d$ real variables $(z,w)$. Since $P_h(Z,0)=0$, we have $p_{h,i}(z,0)=0$ for all $i$. Define 
$$
D_2P_h(Z,W):=\left(\frac{\partial p_{h,i}}{\partial w_j}(z,w)\right)_{i,j=1}^d.
$$ 
Its entries are polynomials, so continuity and compactness give
$$
C_{h,\mathcal K}:=
\sup_{\substack{Z\in\mathcal K\\ \|U\|\le1}}
\|D_2P_h(Z,U)\|_{\infty}<\infty,
$$
where $\|(a)_{i,j}\|_\infty=\max_{i,j}|a_{i,j}|$. For fixed $Z$ and $W$, the ordinary chain rule gives
$$
\frac{d}{dt}p_{h,i}(z, tw)
=\sum_{j=1}^d\frac{\partial p_{h,i}}{\partial w_j}(z,tw)w_j.
$$
Applying the fundamental theorem of calculus gives
$$
p_{h,i}(z,w)=p_{h,i}(z,w)-p_{h,i}(z,0)=\sum_{j=1}^d\left(\int_0^1\frac{\partial p_{h,i}}{\partial w_j}(z,tw)\,dt\right) w_j.
$$
Consequently, for $Z\in\mathcal K$ and $W\in\mathfrak n_\infty$ with $\|W\|\le 1$, we have
$$
\|P_h(Z,W)\|\le dC_{h,\mathcal K}\sum_{j=1}^d |w_j| \le d^2C_{h,\mathcal K}\|W\|.
$$
Since $r^{-h}\le r^{-1}$ for $r\ge1$ and $h\ge1$, 
\begin{align*}
\|P_r(Z,W)-P_\infty(Z,W)\|
&\le\sum_{h=1}^{H}r^{-h}\|P_h(Z,W)\|\\
&\le C_{\mathcal K,d}r^{-1}\|W\|.
\end{align*}
The constant $C_{\mathcal K,d}$ is independent of $r$, $Z\in\mathcal K$, and $W$ with $\|W\|\le1$. In particular, an argument $W$ of size $O(r^{-1})$ gives an error of size $O(r^{-2})$.

Write $\log_{\widetilde N}s=\sum_{k\ge1}Y_k$, where $Y_k\in W_k$ and $q_1(Y_1)=v$. Since $\log_{\widetilde N}(s^{-1})=-\log_{\widetilde N}s$, the definition of $\sigma_r^N$ gives
$$
U_r:=\log_\infty\bigl(\sigma_r^N(s^{-1})\bigr)
=-\sum_{k\ge1}r^{-k}q_k(Y_k).
$$
The sum is finite and $s$ is fixed, so
$$
\|U_r+v/r\|=O(r^{-2}),
\qquad
\|U_r\|=O(r^{-1}).
$$

Fix a compact set $\mathcal K\subset\mathfrak n_\infty$. The derivative $D_2P_\infty$ is bounded on $\mathcal K$ times the closed unit ball. For sufficiently large $r$, both $U_r$ and $-v/r$ lie in that ball. The preceding estimate and the fundamental theorem of calculus in the second variable therefore give
\begin{align*}
\|P_r(Z,U_r)-P_\infty(Z,-v/r)\|
&\le\|P_r(Z,U_r)-P_\infty(Z,U_r)\|+\|P_\infty(Z,U_r)-P_\infty(Z,-v/r)\|\\
&\le C_{\mathcal K,d}r^{-1}\|U_r\|
+C'_{\mathcal K}\|U_r+v/r\|\\
&=O(r^{-2}),
\end{align*}
uniformly for $Z\in\mathcal K$.

For $z=\sigma_r^N(x)=\exp_\infty Z$, the transported product gives the identities
$$
\sigma_r^N(xs^{-1})=\exp_\infty(P_r(Z,U_r)),
\qquad
z\exp_\infty(-v/r)=\exp_\infty(P_\infty(Z,-v/r)).
$$
Put $f:=\eta\circ\exp_\infty$. This is a smooth compactly supported function on $\mathfrak n_\infty$, so its derivative is bounded. The ordinary mean value estimate gives
\begin{align*}
\bigl|\eta(\sigma_r^N(xs^{-1}))
-\eta(z\exp_\infty(-v/r))\bigr|
&=\bigl|f(P_r(Z,U_r))-f(P_\infty(Z,-v/r))\bigr|\\
&\le\|Df\|_\infty
\|P_r(Z,U_r)-P_\infty(Z,-v/r)\|\\
&=O(r^{-2}),
\end{align*}
uniformly when $z$ lies in a fixed compact set.

To expand the remaining term, define $\varphi_z(t):=\eta(z\exp_\infty(tv))$. By the definition of $X_v$,
$$
\varphi_z'(t)=(X_v\eta)(z\exp_\infty(tv)),
\qquad
\varphi_z''(t)=(X_v^2\eta)(z\exp_\infty(tv)).
$$
Taylor's formula with integral remainder gives
$$
\eta(z\exp_\infty(-v/r))
=\eta(z)-\frac1rX_v\eta(z)
+\frac1{r^2}\int_0^1(1-\theta)
(X_v^2\eta)(z\exp_\infty(-\theta v/r))\,d\theta.
$$
Since $X_v^2\eta$ is bounded, the remainder is $O(r^{-2})$. Combining the two estimates and multiplying by $r_j$ yields
$$
\Delta_j(x):=
r_j\bigl(\eta(z_{j,xs^{-1}})-\eta(z_{j,x})\bigr)
=-X_v\eta(z_{j,x})+O(r_j^{-1}),
$$
uniformly when $z_{j,x}$ lies in a fixed compact set.

We now justify the passage to the limit in \eqref{eq:discrete-summation-by-parts}. Put $S:=\operatorname{supp}\eta$. If $\Delta_j(x)\ne0$, then either $z_{j,x}\in S$ or $z_{j,xs^{-1}}\in S$. In the latter case,
$$
z_{j,x}=z_{j,xs^{-1}}*_{r_j}^N\sigma_{r_j}^N(s).
$$
Since $\sigma_{r_j}^N(s)\to e$ and $*^N_{r_j}$ converges uniformly on compact sets to the multiplication of $N_{\infty}$, we can choose a compact set $\mathcal C$ containing $S$ so that the points $z_{j,x}$ lie in $\mathcal C$ for sufficiently large $j$. Since $\operatorname{supp}(X_v\eta)\subseteq S$, both $\Delta_j(x)$ and $X_v\eta(z_{j,x})$ vanish whenever $z_{j,x}\notin\mathcal C$.

Write $\xi:=X_v\eta$ and
$$
e_j(x):=\Delta_j(x)+\xi(z_{j,x}).
$$
The uniform expansion gives $|e_j(x)|\le C_\eta/r_j$ for $z_{j,x}\in\mathcal C$, while $e_j(x)=0$ outside $\mathcal C$. Moreover,
$$
\delta_j:=
\sup_{\substack{x\in N\\z_{j,x}\in\mathcal C}}
\|u_j(z_{j,x})-u(z_{j,x})\|
\longrightarrow0.
$$
Since $u$ is continuous, this also gives a constant $M$ with $\|u_j(z_{j,x})\|\le M$ on $\mathcal C$ for all sufficiently large $j$. \eqref{eq:lattice-counting-bound} supplies a constant $B$ such that
$$
r_j^{-Q}\#\{x\in N:z_{j,x}\in\mathcal C\}\le B.
$$
By \eqref{eq:discrete-summation-by-parts},
\begin{align*}
r_j^{-Q}\sum_x\eta(z_{j,x})b_j(x)
&=r_j^{-Q}\sum_x\Delta_j(x)u_j(z_{j,x})\\
&=-r_j^{-Q}\sum_x\xi(z_{j,x})u_j(z_{j,x})
+r_j^{-Q}\sum_xe_j(x)u_j(z_{j,x}).
\end{align*}
Consequently,
$$
\left\|
r_j^{-Q}\sum_x\eta(z_{j,x})b_j(x)
+r_j^{-Q}\sum_x\xi(z_{j,x})u(z_{j,x})
\right\|
\le B\left(\frac{C_\eta M}{r_j}
+\|\xi\|_\infty\delta_j\right)
\longrightarrow0.
$$
Finally, $\xi u$ is continuous and compactly supported. Applying \eqref{eq:lattice-riemann-sum} to its scalar coordinates gives
$$
r_j^{-Q}\sum_x\xi(z_{j,x})u(z_{j,x})
\longrightarrow
\kappa_N\int_{N_\infty}\xi u\,d\mu_N.
$$
Thus
$$
\lim_j r_j^{-Q}\sum_x\eta(z_{j,x})b_j(x)
=-\kappa_N\int_{N_\infty}(X_v\eta)u\,d\mu_N.
$$

We identify the last integral directly. At almost every $p$, Pansu differentiability of $F$ and projection to the first layer give
$$
\lim_{t\to 0}
\frac{u(p\exp_\infty(tv))-u(p)}t
=D_p^1F(v)=:g_v(p).
$$
Moreover,
$$
\ell_{S_M}\left(\frac{u(p\exp_\infty(tv))-u(p)}t\right)
\le\Lambda\ell_{S_N}(v)=:G_v.
$$
Thus $\|g_v(p)\|\le G_v$ for almost every $p$. The group $N_\infty$ is nilpotent and hence unimodular, so Haar measure is invariant under right translation by $\exp_\infty(tv)$. Changing variables in the first term below gives
\begin{align*}
&\int_{N_\infty}\eta(p)\frac{u(p\exp_\infty(tv))-u(p)}t\,d\mu_N(p)=\int_{N_\infty}\frac{\eta(p\exp_\infty(-tv))-\eta(p)}{t}u(p)\,d\mu_N(p).
\end{align*}
On the left, dominated convergence applies by the uniform difference-quotient bound. On the right, the test-function quotients are supported in one compact set and converge uniformly to $-X_v\eta$. Therefore
$$
\int\eta(p)g_v(p)\,d\mu_N(p)
=-\int(X_v\eta)u\,d\mu_N.
$$
We have proved
$$
r_j^{-Q}\sum_x\eta(z_{j,x})b_j(x)
\longrightarrow
\kappa_N\int\eta(p)g_v(p)\,d\mu_N(p)
\qquad(\eta\in C_c^\infty(N_\infty)).
$$

It remains to pass from smooth functions to the word balls. Put $\rho(p):=d_{N,\infty}(e,p)$ and $b:=\mu_N(\mathbb B)$. Homogeneity gives the scaling of closed-ball volumes, and
$$
B_{N,\infty}(e,t)
=\bigcup_{n:\,1/n<t}\overline B_{N,\infty}(e,t-1/n).
$$
Continuity from below therefore shows that every sphere of positive radius has measure zero and that
$$
\mu_N(\{\rho\le t\})=\mu_N(\{\rho<t\})=t^Qb.
$$
For $0<\delta<1/4$, put
$$
A_\delta:=\{p:1-\delta\le\rho(p)\le1+\delta\}.
$$
Then
$$
\mu_N(A_\delta)
=b\bigl((1+\delta)^Q-(1-\delta)^Q\bigr)\longrightarrow0
\qquad(\delta\downarrow0).
$$
Choose smooth compactly supported functions $\eta_\delta,\zeta_\delta$ with values in $[0,1]$ such that
$$
\eta_\delta=1\ \text{on }\{\rho\le1-\delta\},
\qquad
\operatorname{supp}\eta_\delta\subset\{\rho<1+\delta\},
$$
and
$$
\zeta_\delta=1\ \text{on }A_\delta,
\qquad
\operatorname{supp}\zeta_\delta
\subset\{1-2\delta<\rho<1+2\delta\}.
$$
These functions exist because the prescribed compact sets lie in the indicated open sets. We have
$$
\bigl|\mathbf1_{\{\rho\le1\}}-\eta_\delta\bigr|
\le\mathbf1_{A_\delta}\le\zeta_\delta.
$$
Applying \eqref{eq:lattice-riemann-sum} to $\zeta_\delta$ gives
$$
\limsup_j r_j^{-Q}\#\{x:z_{j,x}\in A_\delta\}
\le\kappa_N\int\zeta_\delta\,d\mu_N
\le\kappa_N\mu_N(A_{2\delta}).
$$

Let $E_j:=\{x\in N:\rho(z_{j,x})\le1\}$. The bounds on $b_j$ and $g_v$ give
\begin{align*}
&\left\|r_j^{-Q}\sum_{x\in E_j}b_j(x)
-\kappa_N\int_{\mathbb B}g_v\,d\mu_N\right\|\\
&\quad\le
M_s r_j^{-Q}\#\{x:z_{j,x}\in A_\delta\}\\
&\qquad+
\left\|r_j^{-Q}\sum_x\eta_\delta(z_{j,x})b_j(x)
-\kappa_N\int\eta_\delta g_v\,d\mu_N\right\|
+\kappa_NG_v\mu_N(A_\delta).
\end{align*}
The middle term tends to zero by the smooth-test convergence. Hence
$$
\limsup_j
\left\|r_j^{-Q}\sum_{x\in E_j}b_j(x)
-\kappa_N\int_{\mathbb B}g_v\,d\mu_N\right\|
\le
\kappa_NM_s\mu_N(A_{2\delta})
+\kappa_NG_v\mu_N(A_\delta).
$$
Letting $\delta\downarrow0$ proves
$$
r_j^{-Q}\sum_{x\in E_j}b_j(x)
\longrightarrow\kappa_N\int_{\mathbb B}g_v\,d\mu_N.
$$
The same estimate with $b_j\equiv1$ and $g_v\equiv1$, using \eqref{eq:lattice-riemann-sum} for its middle term, gives
$$
r_j^{-Q}|E_j|\longrightarrow\kappa_Nb.
$$

For $x\in E_j$, \eqref{eq:cone-bounded-lift} gives $|x|_{S_N}\le C_1r_j$. Thus every point of $E_j\cup B_{r_j}$ satisfies $|x|_{S_N}\le Cr_j$, where $C:=\max\{1,C_1\}$. By \eqref{eq:coordinate-pansu-convergence},
$$
\omega_j:=
\sup_{|x|_{S_N}\le Cr_j}
\left|\frac{|x|_{S_N}}{r_j}-\rho(z_{j,x})\right|
\longrightarrow0.
$$
If $x\in B_{r_j}\setminus E_j$, then $1<\rho(z_{j,x})\le1+\omega_j$. If $x\in E_j\setminus B_{r_j}$, then $1-\omega_j<\rho(z_{j,x})\le1$. Consequently, for each fixed $\delta>0$ and all sufficiently large $j$,
$$
E_j\mathbin\triangle B_{r_j}
\subseteq\{x:z_{j,x}\in A_\delta\}.
$$
The annular estimate therefore gives
$$
\limsup_j r_j^{-Q}|E_j\mathbin\triangle B_{r_j}|
\le\kappa_N\mu_N(A_{2\delta}).
$$
Letting $\delta\downarrow0$ shows that the left hand side of above tends to zero. Since
$$
r_j^{-Q}
\left\|\sum_{x\in B_{r_j}}b_j(x)-\sum_{x\in E_j}b_j(x)\right\|
\le M_s r_j^{-Q}|E_j\mathbin\triangle B_{r_j}|
$$
and
$$
r_j^{-Q}\bigl||B_{r_j}|-|E_j|\bigr|
\le r_j^{-Q}|E_j\mathbin\triangle B_{r_j}|,
$$
we obtain
$$
r_j^{-Q}\sum_{x\in B_{r_j}}b_j(x)
\longrightarrow\kappa_N\int_{\mathbb B}g_v\,d\mu_N,
\qquad
r_j^{-Q}|B_{r_j}|\longrightarrow\kappa_Nb>0.
$$
Dividing these limits yields
$$
\frac1{|B_{r_j}|}\sum_{x\in B_{r_j}}\Phi_s(c_jx)
\longrightarrow
\frac1{\mu_N(\mathbb B)}
\int_{\mathbb B}D_p^1F(v)\,d\mu_N(p).
$$
Since $J(c_jx)v_i=\Phi_{s_i}(c_jx)$ and there are finitely many basis vectors, this proves \eqref{eq:weak-gradient-ball}.
\end{proof}

\begin{lemma}\label{lem:translated-ball-support}
For $R\ge1$ and $c\in N$, put
$$
 J_R(c):=\frac1{|B_R|}\sum_{x\in B_R}J(cx).
$$
Then, for every $\beta\in\Hom(V_N,V_M)^*$,
\begin{equation}\label{eq:support-formula}
 \max_{L\in\mathcal A(\Psi)}\beta(L)
 =\lim_{R\to\infty}\sup_{c\in N}\beta(J_R(c)).
\end{equation}
\end{lemma}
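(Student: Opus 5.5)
We prove two inequalities, writing $s(R):=\sup_{c\in N}\beta(J_R(c))$. First we show that $\limsup_R s(R)\le\max_{\mathcal A(\Psi)}\beta$. Then we show that $\max_{\mathcal A(\Psi)}\beta\le\liminf_R s(R)$. Together these prove that the limit exists and equals the maximum, which exists because $\mathcal A(\Psi)$ is compact by Lemma \ref{lem:linearization-compact-convex}. Note that $J$ is bounded, so $s(R)$ is finite.

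\emph{Upper bound.} Pick $R_k\to\infty$ realizing $\limsup_R s(R)$, and then $c_k\in N$ with $\beta(J_{R_k}(c_k))\ge s(R_k)-1/k$. Let $A_k$ denote averaging over the left-translated balls $c_kB_{R_k}$. Since $J_{R_k}(c_k)=\frac1{|B_{R_k}|}\sum_{x\in c_kB_{R_k}}J(x)$ and \eqref{eq:balls-Folner} holds, these sets form a right F{\o}lner sequence: $|c_kB_{R_k}g\mathbin\triangle c_kB_{R_k}|=|B_{R_k}g\mathbin\triangle B_{R_k}|$. Any weak-* cluster point $\mathsf A$ of $(A_k)$ is therefore a right-invariant mean, exactly as in the proof of Corollary \ref{cor:mean-realization}. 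Apply $\mathsf A$ componentwise to the finitely many functions $\Phi_{s_i}$. This gives $\mathsf A[J]=L_{\mathsf A}$, since both sides agree on the basis $v_i$. Hence $\limsup_R s(R)=\lim_k\beta(A_k[J])=\beta(L_{\mathsf A})\le\max_{\mathcal A(\Psi)}\beta$. This direction is routine.

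\emph{Lower bound.} Fix an arbitrary right-invariant mean $\mathsf A$ and a fixed $R$. By right invariance, $\mathsf A[J(\cdot\,x)]=\mathsf A[J]$ for each $x\in B_R$. Averaging over $x\in B_R$ gives $L_{\mathsf A}=\mathsf A[J]=\mathsf A[J_R(\cdot)]$, where $J_R(\cdot)$ is the function $c\mapsto J_R(c)$. By \eqref{eq:mean-in-hull} and positivity, $\beta(L_{\mathsf A})=\mathsf A[\beta\circ J_R]\le\sup_c\beta(J_R(c))=s(R)$. This holds for every $R\ge1$, so $\max_{\mathcal A(\Psi)}\beta\le\inf_R s(R)\le\liminf_R s(R)$.

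The main point needing care is the commutation $\mathsf A[J(\cdot\,x)]=\mathsf A[J]$. We have $J(cx)$ with $x$ acting on the right of $c$, which is exactly the translation that right-invariance controls. This is consistent with the convention $\mathsf A[f(\cdot g)]=\mathsf A[f]$, so the argument goes through. Notably, no blow-down or Lemma \ref{lem:discrete-weak-derivatives} is needed here; that lemma will presumably enter later, when identifying $\lim_R J_R(c_R)$ with averages of Pansu differentials to obtain Theorem \ref{thm:main-pointed}.
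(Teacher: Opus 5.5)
Your proof is correct and follows the paper's argument essentially step for step. The lower bound $\max_{\mathcal A(\Psi)}\beta\le s(R)$ for every $R$ comes from right-invariance applied to the translates $J(\cdot\,x)$, $x\in B_R$, and the upper bound comes from a weak-* cluster point of averages over the right F{\o}lner sets $c_kB_{R_k}$, chosen to nearly attain $s(R_k)$ along a sequence realizing the $\limsup$; the only difference is that you present the two inequalities in the opposite order.
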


\begin{proof}
For every right-invariant mean $\mathsf A$ on $N$, 
$$
\mathsf A[J]v_i=\mathsf A[J(\cdot)v_i]=\mathsf A[\Phi_{s_i}]=L_{\mathsf A}v_i.
$$
Since the $v_i$ form a basis of $V_N$, $\mathsf A[J]=L_{\mathsf A}$. Therefore, $\beta(L_{\mathsf A})=\mathsf A(\beta\circ J)$. For each fixed $x\in N$, define the bounded function $\xi_x:N\to \RR$ by 
$$
\xi_x(y)=\beta(J(yx)) \quad(y\in N).
$$
Then, $\xi_x=R_x(\beta\circ J)$, so $\mathsf A(\xi_x)=\mathsf A(\beta\circ J)$ for all $x\in N$. Hence, we get
$$
\beta(L_{\mathsf A})
=\mathsf A\left[\frac1{|B_R|}\sum_{x\in B_R}\xi_x\right]
=\mathsf A(\beta\circ J_R)
\le\sup_{c\in N}\beta(J_R(c)).
$$
Put $a_R:=\sup_{c\in N}\beta(J_R(c))$ and $h:=\max_{L\in\mathcal A(\Psi)}\beta(L)$. The preceding inequality gives $h\le a_R$ for every $R\ge1$, hence $h\le\liminf_{R\to\infty}a_R$.

Choose $R_j\to\infty$ such that $a_{R_j}\to\limsup_{R\to\infty}a_R$, and choose $c_j$ with $\beta(J_{R_j}(c_j))>a_{R_j}-1/j$. Let $S_j:=c_jB_{R_j}$ and 
$$
A_j(f):=\frac{1}{|S_j|}\sum_{x\in S_j}f(x), \quad (f\in\ell^\infty(N)).
$$
By Banach-Alaoglu theorem, there is a weak-* convergent subnet $(A_{j(\alpha)})$ of $(A_j)$ converging to a state $\mathsf A$ on $\ell^\infty(N)$. Since $(S_j)$ is a right F\o lner sequence, $\mathsf A$ is right invariant. For every basis vector $v_i$,
\begin{equation}\label{eq:folner-argument}
J_{R_{j(\alpha)}}(c_{j(\alpha)})v_i
=A_{j(\alpha)}[\Phi_{s_i}]
\longrightarrow\mathsf A[\Phi_{s_i}]
=L_{\mathsf A}v_i.
\end{equation}
Since $V_N$ and $V_M$ are finite dimensional, this proves
$$
J_{R_{j(\alpha)}}(c_{j(\alpha)})\longrightarrow L_{\mathsf A}.
$$
Moreover, $j(\alpha)\to\infty$, and $0\le a_{R_j}-\beta(J_{R_j}(c_j))<1/j.$ Consequently,
$$
\limsup_{R\to\infty}a_R=\beta(L_{\mathsf A})\le h.
$$
Hence, $\limsup_{R\to\infty}a_R\le h \le \liminf_{R\to\infty}a_R$, which proves \eqref{eq:support-formula}.

\end{proof}

\begin{theorem}[Invariant-mean linearizations are convex hulls of first layer differentials]
\label{thm:pointed-convexification}
For every quasi-isometry $\Psi:N\to M$ as above,
\begin{equation}\label{eq:exact-pointed-convexification}
 \mathcal A(\Psi)
 =\operatorname{conv}\mathscr D^1_{\mathrm{pt}}(\Psi)
 =\operatorname{conv}\mathscr G_{\mathrm{pt}}(\Psi).
\end{equation}
Moreover,
\begin{equation}\label{eq:extreme-pointed}
 \operatorname{Ext}\mathcal A(\Psi)
 \subseteq\mathscr G_{\mathrm{pt}}(\Psi).
\end{equation}
In particular, every extreme point of $\mathcal A(\Psi)$ extends to a graded $\Lambda$-bi-Lipschitz isomorphism and occurs as a homogeneous pointed blow-down of $\Psi$.
\end{theorem}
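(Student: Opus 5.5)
The plan is to prove the two inclusions $\operatorname{conv}\mathscr G_{\mathrm{pt}}(\Psi)\subseteq\mathcal A(\Psi)$ and $\mathcal A(\Psi)\subseteq\operatorname{conv}\mathscr G_{\mathrm{pt}}(\Psi)$, using Lemma \ref{lem:retangent-pointed} to replace $\mathscr D^1_{\mathrm{pt}}(\Psi)$ by $\mathscr G_{\mathrm{pt}}(\Psi)$ throughout. The first inclusion is immediate. Lemma \ref{lem:pointed-realization} gives $\mathscr D^1_{\mathrm{pt}}(\Psi)\subseteq\mathcal A(\Psi)$, and $\mathcal A(\Psi)$ is convex by Lemma \ref{lem:linearization-compact-convex}.

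For the reverse inclusion I will use a support-function argument. Since $\mathscr G_{\mathrm{pt}}(\Psi)$ is compact (Lemma \ref{lem:retangent-pointed}) in the finite-dimensional space $\Hom(V_N,V_M)$, its convex hull is compact by Carathéodory. It therefore suffices to show that for every functional $\beta\in\Hom(V_N,V_M)^*$,
$$
\max_{L\in\mathcal A(\Psi)}\beta(L)\le\sup_{L\in\mathscr G_{\mathrm{pt}}(\Psi)}\beta(L).
$$
By Lemma \ref{lem:translated-ball-support}, the left side equals $\lim_R\sup_c\beta(J_R(c))$. Choose $R_j\to\infty$ and $c_j$ with $\beta(J_{R_j}(c_j))$ tending to this value. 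Applying the cone-map compactness (Lemma \ref{lem:cone-map}) to the maps $\Psi^{c_j}$ at scales $r_j:=R_j$ and passing to a subsequence, we get a pointed blow-down $F\in\mathscr B_{\mathrm{pt}}(\Psi)$. Lemma \ref{lem:discrete-weak-derivatives} then gives
$$
J_{R_j}(c_j)\to\frac1{\mu_N(\mathbb B)}\int_{\mathbb B}D^1_pF\,d\mu_N(p).
$$
Hence the maximum equals the average of $\beta(D^1_pF)$ over $\mathbb B$, which is at most $\operatorname{ess\,sup}_p\beta(D^1_pF)\le\sup_{\mathscr D^1_{\mathrm{pt}}}\beta=\sup_{\mathscr G_{\mathrm{pt}}}\beta$. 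Measurability of $p\mapsto D^1_pF$ is implicit in Lemma \ref{lem:discrete-weak-derivatives}. Hahn--Banach separation then yields $\mathcal A(\Psi)\subseteq\operatorname{conv}\mathscr G_{\mathrm{pt}}(\Psi)$, proving \eqref{eq:exact-pointed-convexification}.

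For \eqref{eq:extreme-pointed}, an extreme point of a convex hull $\operatorname{conv}K$ must lie in $K$: if $L=\sum\lambda_iL_i$ with $L_i\in K$ and all $\lambda_i>0$, extremality forces every $L_i=L$. Since $\mathscr G_{\mathrm{pt}}(\Psi)$ is already compact, no closure is needed, so Milman's converse is not required. The final assertion follows because each $L\in\mathscr G_{\mathrm{pt}}(\Psi)$ has $\widehat L\in\mathscr B_{\mathrm{pt}}(\Psi)$. Pointed blow-downs are $\Lambda$-bi-Lipschitz homeomorphisms, as noted after Definition \ref{def:pointed-blowdown}, so $\widehat L$ is a bi-Lipschitz graded homomorphism and hence a graded isomorphism, by the dimension count in Lemma \ref{lem:first-layer-biLip}.

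The main obstacle is the middle step, matching Lemma \ref{lem:translated-ball-support} with Lemma \ref{lem:discrete-weak-derivatives}. The balls $B_R$ in $J_R$ have radius equal to the scale, so one must take $r_j=R_j$ exactly; the earlier lemmas are set up for precisely this choice. One must also check that passing to a subsequence preserves the limit of $\beta(J_{R_j}(c_j))$, which it does. The averaged-differential bound by the supremum is elementary once the a.e. Pansu differentiability of $F$ is in hand.
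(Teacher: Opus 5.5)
Your proof is correct. Its analytic core is the same as the paper's: Lemma \ref{lem:translated-ball-support} produces near-maximizing translated balls, you take a pointed blow-down along them at scale $r_j=R_j$, and Lemma \ref{lem:discrete-weak-derivatives} identifies the limit of $J_{R_j}(c_j)$ as the $\mathbb B$-average of $D^1_pF$.

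Where you differ is the convex-geometric packaging, and the logical order is reversed.

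\emph{The paper's route.} The paper fixes an \emph{exposed} point $L$ with exposing functional $\beta$. Exposedness forces $J_{R_j}(c_j)\to L$, and then $D^1_pF=L$ for a.e.\ $p\in\mathbb B$, so exposed points lie in $\mathscr G_{\mathrm{pt}}(\Psi)$. It passes to all extreme points using Straszewicz's density theorem and the closedness of $\mathscr G_{\mathrm{pt}}(\Psi)$. Only then does it obtain $\mathcal A(\Psi)=\operatorname{conv}\mathscr G_{\mathrm{pt}}(\Psi)$, from Minkowski/Krein--Milman.

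\emph{Your route.} You compare support functions in every direction $\beta$. You get the equality of convex sets directly by separation, using compactness of $\mathscr G_{\mathrm{pt}}(\Psi)$ and Carath\'eodory to know the hull is closed. You then deduce $\operatorname{Ext}\mathcal A(\Psi)\subseteq\mathscr G_{\mathrm{pt}}(\Psi)$ from the elementary fact $\operatorname{Ext}(\operatorname{conv}K)\subseteq K$.

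Your route is lighter in two ways. It avoids Straszewicz's theorem. It also needs only the easy half of Lemma \ref{lem:translated-ball-support}, namely $\max_{\mathcal A(\Psi)}\beta\le\sup_c\beta(J_R(c))$. Choosing $c_j$ with $\beta(J_{R_j}(c_j))\ge h-1/j$ suffices, since the limit average is then bounded above by $\sup_{\mathscr G_{\mathrm{pt}}(\Psi)}\beta$. You never need the F\o lner cluster-point argument.

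The paper's route gives a slightly sharper byproduct. For each exposed point $L$, the maximizing translated balls themselves average to $L$. The resulting pointed blow-down then has first-layer differential equal to $L$ almost everywhere on $\mathbb B$.
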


\begin{proof}
By Lemmas \ref{lem:pointed-realization} and \ref{lem:retangent-pointed}, $\mathscr G_{\mathrm{pt}}(\Psi)\subseteq\mathcal A(\Psi)$. Since $\mathcal A(\Psi)$ is convex,
$$
\operatorname{conv}\mathscr G_{\mathrm{pt}}(\Psi)\subseteq\mathcal A(\Psi).
$$
Let $L$ be an exposed point of $\mathcal A(\Psi)$, exposed by $\beta\in\Hom(V_N,V_M)^*$, and put $h=\max_{K\in\mathcal A(\Psi)}\beta(K)$. By Lemma \ref{lem:translated-ball-support}, choose $R_j\to\infty$ and $c_j\in N$ such that
$$
 \beta(J_{R_j}(c_j))\longrightarrow h.
$$
Every cluster point of $J_{R_j}(c_j)$ belongs to $\mathcal A(\Psi)$ by the argument used in the proof of Lemma \ref{lem:translated-ball-support} to obtain \eqref{eq:folner-argument}. Since $\beta$ exposes $L$, it follows, after passing to a subsequence, that
\begin{equation}\label{eq:exposed-ball-average}
J_{R_j}(c_j)\longrightarrow L.
\end{equation}
Pass to a further subsequence so that the translates $\Psi^{c_j}$ at scales $R_j$ converge to a pointed blow-down $F$. By Lemma \ref{lem:discrete-weak-derivatives},
$$
L=\frac1{\mu_N(\mathbb B)}\int_{\mathbb B}D_p^1F\,d\mu_N(p).
$$
For almost every $p$, Lemma \ref{lem:pointed-realization} gives $D_p^1F\in\mathcal A(\Psi)$, so $\beta(D_p^1F)\le h$. Since
$$
\frac1{\mu_N(\mathbb B)}\int_{\mathbb B}\beta(D_p^1F)\,d\mu_N(p)=\beta(L)=h,
$$ 
this forces $\beta(D_p^1F)=h$ for almost every $p\in\mathbb B$. As $L$ is an exposed point of $\mathcal A(\Psi)$ exposed by $\beta$, we must have
$$
D_p^1F=L\quad\text{for almost every }p\in\mathbb B.
$$
Choose a Pansu differentiability point $p\in\mathbb B$ with this property. By Lemma \ref{lem:retangent-pointed}, 
$$
L=D_p^1F\in\mathscr D^1_{\mathrm{pt}}(\Psi)=\mathscr G_{\mathrm{pt}}(\Psi).
$$ Thus every exposed point of $\mathcal A(\Psi)$ lies in $\mathscr G_{\mathrm{pt}}(\Psi)$. Now, exposed points are dense in the set of all extreme points of a compact convex set in finite dimensions. Since $\mathscr G_{\mathrm{pt}}(\Psi)$ is closed by Lemma \ref{lem:retangent-pointed}, this proves \eqref{eq:extreme-pointed}. By Krein-Milman theorem, $\mathcal A(\Psi)\subseteq\operatorname{conv}\mathscr G_{\mathrm{pt}}(\Psi)$, which completes the proof since $\mathscr D^1_{\mathrm{pt}}(\Psi)=\mathscr G_{\mathrm{pt}}(\Psi)$.
\end{proof}

\begin{corollary}[Singleton criterion]
\label{cor:uniform-singleton}
For a linear map $L:V_N\to V_M$, the following are equivalent.
\begin{enumerate}
\item $\mathcal A(\Psi)=\{L\}$.
\item Every pointed blow-down of $\Psi$ is the same graded map $\widehat L$.
\item With respect to any fixed norm $\|\cdot\|$ on $V_M$,
\begin{equation}\label{eq:uniform-sublinear-increments}
 \lim_{R\to\infty}
 \sup_{\substack{x,n\in N\\|n|_{S_N}\ge R}}
 \frac{\bigl\|\Phi_n(x)-L\pi_N(n)\bigr\|}{|n|_{S_N}}=0.
\end{equation}
\end{enumerate}
\end{corollary}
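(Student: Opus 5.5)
I will prove the cycle (1)$\Rightarrow$(2)$\Rightarrow$(3)$\Rightarrow$(1), relying mainly on Theorem~\ref{thm:pointed-convexification}, Lemma~\ref{lem:constant-pansu-differential} and Lemma~\ref{lem:pointed-realization}.

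\emph{(1)$\Rightarrow$(2).} Let $F\in\mathscr B_{\mathrm{pt}}(\Psi)$. By Lemma~\ref{lem:pointed-realization}, $D_p^1F\in\mathcal A(\Psi)=\{L\}$ at almost every $p$. Lemma~\ref{lem:constant-pansu-differential} then gives $F=F(e)\widehat L=\widehat L$, since $F(e)=e$.

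\emph{(2)$\Rightarrow$(1).} Pointed blow-downs exist, by the remark after Definition~\ref{def:pointed-blowdown}. Hence $\mathscr D^1_{\mathrm{pt}}(\Psi)=\{L\}$: for any $F=\widehat L$, the Pansu differential at every point is $\widehat L$ itself, because a graded homomorphism is its own differential. Theorem~\ref{thm:pointed-convexification} then gives $\mathcal A(\Psi)=\operatorname{conv}\{L\}=\{L\}$.

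\emph{(2)$\Rightarrow$(3).} Suppose \eqref{eq:uniform-sublinear-increments} fails. Then there are $\varepsilon>0$, $x_j\in N$ and $n_j\in N$ with $r_j:=|n_j|_{S_N}\to\infty$ and
\[
\|\Phi_{n_j}(x_j)-L\pi_N(n_j)\|\ge\varepsilon r_j .
\]
Set $c_j:=x_j$. By \eqref{eq:cone-bounded-lift} and \eqref{eq:coordinate-pansu-convergence}, the points $\sigma^N_{r_j}(n_j)$ lie in a compact set, so after passing to a subsequence they converge to some $z$ with $d_\infty(e,z)=1$. By Lemma~\ref{lem:cone-map} applied to the maps $\Psi^{c_j}$, a further subsequence converges to a pointed blow-down, which by (2) equals $\widehat L$.

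We have $\Phi_{n_j}(x_j)=\pi_M(\Psi^{c_j}(n_j))=r_j\,\pi_M^\infty\bigl(\Psi_{c_j,r_j}(\sigma_{r_j}^N(n_j))\bigr)$. Uniform convergence on the ball of radius $2$ gives
\[
\Psi_{c_j,r_j}\bigl(\sigma_{r_j}^N(n_j)\bigr)\to\widehat L(z).
\]
Continuity of $\widehat L$ and the fact that $\pi_M^\infty$ is $1$-Lipschitz then give $r_j^{-1}\Phi_{n_j}(x_j)\to\pi_M^\infty\widehat L(z)=L\pi_N^\infty(z)$. Since also $r_j^{-1}L\pi_N(n_j)=L\pi_N^\infty(\sigma_{r_j}^N(n_j))\to L\pi_N^\infty(z)$, the normalized error tends to $0$. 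This contradicts the choice of $x_j,n_j$. The choice of norm on $V_M$ is irrelevant, since all norms on a finite-dimensional space are equivalent.

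\emph{(3)$\Rightarrow$(1).} Let $\mathsf A$ be a right-invariant mean and $n\in N$ with $\pi_N(n)\neq0$. By Lemma~\ref{lem:scale-invariance}, $L_{\mathsf A}\pi_N(n)=\mathsf A[k^{-1}\Phi_{n^k}]$. By \eqref{eq:mean-in-hull} this lies within $\sup_x\|k^{-1}\Phi_{n^k}(x)-L\pi_N(n)\|$ of $L\pi_N(n)$, because $k^{-1}L\pi_N(n^k)=L\pi_N(n)$. By \eqref{eq:power-stable-length}, $|n^k|_{S_N}\sim k\,\ell_{S_N}(\pi_N(n))\to\infty$, so by (3) this supremum is $o(|n^k|)/k\to0$. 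Hence $L_{\mathsf A}\pi_N(n)=L\pi_N(n)$. Such $\pi_N(n)$ span $V_N$, so $L_{\mathsf A}=L$ for every $\mathsf A$, that is, $\mathcal A(\Psi)=\{L\}$.

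The main obstacle is (2)$\Rightarrow$(3). One has to ensure that the rescaled lattice point $\sigma^N_{r_j}(n_j)$ stays in the fixed ball on which the uniform convergence \eqref{eq:pointed-uniform-conv} is available. One also has to identify $\pi_M^\infty\circ\widehat L$ with $L\circ\pi_N^\infty$, which holds because $\widehat L$ is graded with first layer $L$.
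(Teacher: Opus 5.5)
Your proof is correct and follows the paper's argument essentially step for step. The shared ingredients are Lemma~\ref{lem:constant-pansu-differential} together with the exact convexification for (1)$\Leftrightarrow$(2), the contradiction at scales $r_j=|n_j|_{S_N}$ with basepoints $c_j=x_j$ for (2)$\Rightarrow$(3), and the power trick with $n^k$ via \eqref{eq:mean-in-hull} and \eqref{eq:power-stable-length} for (3)$\Rightarrow$(1); the only differences are cosmetic (citing Lemma~\ref{lem:pointed-realization} directly, using $\mathscr D^1_{\mathrm{pt}}$ instead of $\mathscr G_{\mathrm{pt}}$, and restricting to $\pi_N(n)\neq0$, which is slightly cleaner).
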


\begin{proof}
If $\mathcal A(\Psi)=\{L\}$, then Theorem \ref{thm:pointed-convexification} gives $\mathscr D^1_{\mathrm{pt}}(\Psi)=\{L\}$. Every pointed blow-down therefore has first-layer Pansu differential $L$ almost everywhere, and Lemma \ref{lem:constant-pansu-differential} shows that it equals $\widehat L$. Conversely, if every pointed blow-down is $\widehat L$, then $\mathscr G_{\mathrm{pt}}(\Psi)=\{L\}$ and \eqref{eq:exact-pointed-convexification} gives $\mathcal A(\Psi)=\{L\}$.

If \eqref{eq:uniform-sublinear-increments} fails while every pointed blow-down were $\widehat L$, there exists $x_j,n_j\in N$ with $|n_j|_{S_N}\to\infty$ as $j\to\infty$ such that
\begin{equation}\label{eq:singleton-criterion-contradiction}
\|\Phi_{n_j}(x_j)-L\pi_N(n_j)\|\ge\varepsilon|n_j|_{S_N}.
\end{equation}
Set $r_j:=|n_j|_{S_N}$, then \eqref{eq:coordinate-pansu-convergence} gives $d_{N,\infty}(e,\sigma_{r_j}^N(n_j))\to1$ as $j\to\infty$. Since $(N_\infty, d_{N,\infty})$ is a proper metric space, after passing to a common subsequence we may assume that
\begin{equation}\label{eq:intermediate-convergence}
 \sigma_{r_j}^N(n_j)\longrightarrow q
 \quad\text{and}\quad
 \Psi_{x_j,r_j}\longrightarrow F\in\mathscr B_{\mathrm{pt}}(\Psi)
\end{equation}
in the sense of \eqref{eq:pointed-uniform-conv}. Assumption (2) gives $F=\widehat L$. Since $\Psi_{x_j,r_j}(\sigma_{r_j}^N(n_j))=\sigma_{r_j}^M(\Psi(x_j)^{-1}\Psi(x_jn_j))$, using \eqref{eq:intermediate-convergence} and the continuity of $\pi_M^\infty$ we get
$$
 \frac1{r_j}\Phi_{n_j}(x_j)
 \longrightarrow\pi_M^\infty(\widehat L(q))
 =L\pi_N^\infty(q)
 =\lim_j\frac1{r_j}L\pi_N(n_j),
$$
contradicting \eqref{eq:singleton-criterion-contradiction}. Thus (2) implies (3).

Finally, assume (3). For $n\in N\setminus\{e_N\}$ and $k\ge1$, apply (3) to $n^k$. Then $|n^k|_{S_N}/k\to\ell_{S_N}(\pi_N(n))$ by \eqref{eq:power-stable-length}, and hence
$$
 \sup_{x\in N}\frac1k
 \|\Phi_{n^k}(x)-kL\pi_N(n)\|=\frac{|n^k|}k\sup_{x\in N}\frac{1}{|n^k|}
 \|\Phi_{n^k}(x)-L\pi_N(n^k)\|\longrightarrow0.
$$
For $k\in\NN$, define $E_k(x)=L\pi_N(n)-\frac{1}{k}\Phi_{n^k}(x)$, $x\in N$. Take a right-invariant mean $\mathsf A$ on $N$. Then, $L\pi_N(n)=\mathsf A[E_k]+L_{\mathsf A}\pi_N(n)$ for each $n\in N$, which gives $L\pi_N(n)=L_{\mathsf A}\pi_N(n)$ on taking $k\to\infty$. Since the image of $\pi_N$ spans $V_N$, we have $L_{\mathsf A}=L$ and $\mathcal A(\Psi)=\{L\}$.
\end{proof}

\begin{definition}[Bounded Abelian defect]
\label{def:bounded-abelian-defect}
A quasi-isometry $\Psi$ has \emph{bounded Abelian defect} if
$$
 \Delta_{\mathrm{ab}}(\Psi)
 :=\sup_{x,y\in N}
 \ell_{S_M}\!\left(
   \pi_M(\Psi(xy))-\pi_M(\Psi(x))-\pi_M(\Psi(y))
 \right)<\infty.
$$
\end{definition}

\begin{proposition}[Bounded-defect rigidity and functoriality]
\label{prop:bounded-defect-rigidity}
Suppose that $D:=\Delta_{\mathrm{ab}}(\Psi)<\infty$. There is a unique linear map $L_\Psi:V_N\to V_M$ such that $\pi_M\circ\Psi-L_\Psi\circ\pi_N$ is bounded. In fact,
\begin{equation}\label{eq:bounded-defect-linearization}
 \sup_{n\in N}
 \ell_{S_M}\!\left(\pi_M(\Psi(n))-L_\Psi\pi_N(n)\right)\le D.
\end{equation}
Moreover,
\begin{equation}\label{eq:bounded-defect-uniform-increments}
 \sup_{x,n\in N}
 \ell_{S_M}\!\left(\Phi_n(x)-L_\Psi\pi_N(n)\right)\le2D,
\end{equation}
$\mathcal A(\Psi)=\{L_\Psi\}$, and every pointed blow-down of $\Psi$ is the same graded bi-Lipschitz isomorphism $\widehat L_\Psi:N_\infty\to M_\infty$. In particular, $L_\Psi$ is a linear isomorphism.

The assignment $\Psi\mapsto L_\Psi$ is unchanged by bounded perturbations and satisfies
$$
 L_{\Theta\circ\Psi}=L_\Theta L_\Psi
$$
whenever $\Theta:M\to P$ is another quasi-isometry of bounded Abelian defect, where $P$ is a finitely generated torsion-free nilpotent group with a fixed word metric. Every coarse inverse $\overline\Psi:M\to N$, after normalization, has bounded Abelian defect and
$$
 L_{\overline\Psi}=L_\Psi^{-1}.
$$
\end{proposition}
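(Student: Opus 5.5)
The plan is to build $L_\Psi$ as a homomorphism limit via a Hyers--Ulam type argument, and then read off every other assertion from the earlier results.

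\emph{Construction of $L_\Psi$.} Put $f:=\pi_M\circ\Psi:N\to V_M$. By hypothesis $f$ is a quasimorphism into the normed space $(V_M,\ell_{S_M})$ with defect $D$. For each $n\in N$ the sequence $f(n^{k})/k$ is Cauchy, by the standard doubling estimate $\|f(n^{2k})-2f(n^k)\|\le D$. Its limit $\phi(n)$ satisfies $\|f(n)-\phi(n)\|\le D$. Being a homogeneous quasimorphism into an abelian normed space, $\phi$ is an honest homomorphism: the function $\phi(xy)-\phi(x)-\phi(y)$ is bounded and is homogeneous along powers, so it vanishes. Because $N$ is nilpotent, no commutative-target subtlety (Bavard-type correction) arises here. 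Indeed, the bounded function $\phi(xy)-\phi(x)-\phi(y)$ is itself a $V_M$-valued quasimorphism in each variable, and the $\mathsf A$-averaging trick for amenable $N$ kills it. In particular $\phi$ kills torsion and commutators, so $\phi=L_\Psi\circ\pi_N$ for a unique linear $L_\Psi$, and \eqref{eq:bounded-defect-linearization} holds. Uniqueness follows because two linear maps differing by a bounded amount on $\pi_N(N)$, which spans $V_N$, must agree after scaling by powers.

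\emph{Increments and rigidity.} We have $\Phi_n(x)=f(xn)-f(x)$. Applying \eqref{eq:bounded-defect-linearization} twice together with linearity gives $\Phi_n(x)=L_\Psi\pi_N(n)+O(2D)$, which is \eqref{eq:bounded-defect-uniform-increments}. This implies condition (3) of Corollary \ref{cor:uniform-singleton}, hence $\mathcal A(\Psi)=\{L_\Psi\}$ and every pointed blow-down equals $\widehat L_\Psi$. Theorem \ref{thm:pointed-convexification} and Lemma \ref{lem:first-layer-biLip} then show that $\widehat L_\Psi$ is a graded $\Lambda$-bi-Lipschitz isomorphism. In particular $L_\Psi$ is invertible.

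\emph{Invariance and functoriality.} If $\Psi'$ is at bounded distance from $\Psi$, then $\pi_M\circ\Psi'-\pi_M\circ\Psi$ is bounded. So $\Psi'$ also has bounded defect, and by uniqueness $L_{\Psi'}=L_\Psi$; this uses Proposition \ref{prop:bounded-distance-invariance} in spirit. For composition, write
\[
\pi_P\Theta\Psi=L_\Theta\pi_M\Psi+O(1)=L_\Theta L_\Psi\pi_N+O(1).
\]
This already exhibits a linear map with bounded error, so the only point is that $\Theta\circ\Psi$ has bounded defect, which follows from the same display since $L_\Theta L_\Psi\pi_N$ is additive. Uniqueness then gives $L_{\Theta\circ\Psi}=L_\Theta L_\Psi$.

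\emph{Coarse inverses.} This is the step I expect to need the most care. Since $\Psi\overline\Psi$ is at bounded distance from $\mathrm{id}_M$, we get $L_\Psi\pi_N\overline\Psi=\pi_M+O(1)$. Because $L_\Psi$ is invertible (from the rigidity step), $\pi_N\overline\Psi=L_\Psi^{-1}\pi_M+O(1)$, which is additive up to a bounded error. Hence $\overline\Psi$ has bounded Abelian defect; normalization only changes it by a bounded translation-compatible amount, since left multiplication by $\overline\Psi(e)^{-1}$ shifts $\pi_N$ by a constant. By uniqueness, $L_{\overline\Psi}=L_\Psi^{-1}$.
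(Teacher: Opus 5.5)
\emph{Overall:} this is essentially correct and lands on the paper's proof, but the step where you turn the homogenization into a homomorphism is justified wrongly as written.

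\textbf{What is fine.} The increment bound, the passage through Corollary \ref{cor:uniform-singleton} to $\mathcal A(\Psi)=\{L_\Psi\}$ and to the pointed blow-downs, bounded-perturbation invariance, composition and coarse inverses all work. On composition you are slightly more careful than the paper, since you note that $\Theta\circ\Psi$ must first be shown to have bounded defect.

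\textbf{The gap.} The problem is the step where you turn $\phi(n)=\lim_k f(n^k)/k$ into a homomorphism. Your first reason is that $\delta(x,y):=\phi(xy)-\phi(x)-\phi(y)$ is bounded and ``homogeneous along powers'', hence vanishes. This fails in a non-abelian group. Because $(xy)^k\neq x^ky^k$, homogeneity of $\phi$ gives no relation between $\delta(x^k,y^k)$ and $k\,\delta(x,y)$. No argument of this shape can work: it uses no amenability, yet free groups carry homogeneous quasimorphisms that are not homomorphisms. The relevant input is amenability. Nilpotency only enters through that, and Bavard-type corrections are beside the point. Your fallback sentence about ``the averaging trick'' names the right tool but does not say what is averaged.

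\textbf{The repair.} Fix a right-invariant mean $\mathsf A$ and set
\[
\psi(n):=\mathsf A_x\bigl[f(xn)-f(x)\bigr]=\mathsf A[\Phi_n].
\]
Right invariance and the cocycle identity make $\psi$ a homomorphism. Since $f(xn)-f(x)-f(n)$ takes values in the closed $D$-ball, \eqref{eq:mean-in-hull} gives $\ell_{S_M}(\psi(n)-f(n))\le D$. Then $\phi-\psi$ is bounded and satisfies $(\phi-\psi)(n^k)=k(\phi-\psi)(n)$, so it is zero.

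\textbf{Consequence for your route.} Once you do this, the Hyers--Ulam homogenization is superfluous. The map $\psi=L_{\mathsf A}\circ\pi_N$ is already the required linear map, with error at most $D$. Your own uniqueness argument then shows $L_{\mathsf A}$ is independent of $\mathsf A$. This gives $\mathcal A(\Psi)=\{L_\Psi\}$ directly, without condition (3) of Corollary \ref{cor:uniform-singleton}. This is exactly the paper's proof.
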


\begin{proof}
Put $a(n):=\pi_M(\Psi(n))$. For every right-invariant mean $\mathsf A$ and every $n\in N$,
$$
 L_{\mathsf A}\pi_N(n)-a(n)
 =\mathsf A_x\bigl[a(xn)-a(x)-a(n)\bigr].
$$
where $\mathsf A_x$ denotes that the mean is being operated considering the expression inside the brackets as a function of $x$. The function inside the mean takes values in the closed $\ell_{S_M}$-ball of radius $D$. Hence \eqref{eq:mean-in-hull} gives
\begin{equation}\label{eq:mean-bounded-defect-estimate}
 \ell_{S_M}\bigl(L_{\mathsf A}\pi_N(n)-a(n)\bigr)\le D
 \qquad(n\in N).
\end{equation}
If two linear maps $L,K:V_N\to V_M$ are at bounded distance from $a$ on $\pi_N(N)$, then, for every $n\in N$ and $k\ge1$,
$$
 k\,\ell_{S_M}\bigl((L-K)\pi_N(n)\bigr)
 =\ell_{S_M}\bigl((L-K)\pi_N(n^k)\bigr)
$$
is bounded independently of $k$. This forces $L\pi_N(n)=K\pi_N(n)$. Since $\pi_N(N)$ spans $V_N$, one has $L=K$. It follows from \eqref{eq:mean-bounded-defect-estimate} that all invariant means give the same map $L_\Psi$. This proves both uniqueness and $\mathcal A(\Psi)=\{L_\Psi\}$.

Set $b(n):=a(n)-L_\Psi\pi_N(n)$. Then $\ell_{S_M}(b(n))\le D$ and
$$
 \Phi_n(x)-L_\Psi\pi_N(n)=b(xn)-b(x),
$$
which proves \eqref{eq:bounded-defect-uniform-increments}. The assertions about the pointed blow-downs now follow from Corollary \ref{cor:uniform-singleton}. Since the first-layer maps of pointed blow-downs of a quasi-isometry are bi-Lipschitz isomorphisms, $L_\Psi$ is invertible.

It remains to record functoriality. If $\Psi'$ is at bounded distance from $\Psi$, then $\pi_M\circ\Psi'-\pi_M\circ\Psi$ is bounded on $N$. Thus, $\pi_M\circ\Psi'-L_\Psi\circ\pi_N$ is bounded on $N$, which implies $\Psi'$ has bounded Abelian defect and by uniqueness gives $L_{\Psi'}=L_{\Psi}$. For composition, write
$$
 \pi_M(\Psi(n))=L_\Psi\pi_N(n)+b_\Psi(n),\qquad
 \pi_P(\Theta(m))=L_\Theta\pi_M(m)+b_\Theta(m),
$$
where $b_\Psi$ is bounded on $N$ and $b_\Theta$ is bounded on $M$. Then
$$
 \pi_P(\Theta(\Psi(n)))
 =L_\Theta L_\Psi\pi_N(n)
  +L_\Theta b_\Psi(n)+b_\Theta(\Psi(n)) \quad(n\in N).
$$
Hence $\pi_P\circ(\Theta\circ\Psi)-L_\Theta L_\Psi\circ\pi_N$ is bounded on $N$, so uniqueness gives $L_{\Theta\circ\Psi}=L_\Theta L_\Psi$.

Finally, let $\overline\Psi$ be a coarse inverse of $\Psi$. Since $\Psi\circ\overline\Psi$ is at bounded distance from the identity, $L_\Psi\pi_N\circ\overline\Psi-\pi_M$ is bounded on $M$. Applying $L_\Psi^{-1}$ shows that $\pi_N\circ\overline\Psi-L_\Psi^{-1}\circ\pi_M$ is bounded on $M$. This implies bounded Abelian defect for $\overline\Psi$, and uniqueness gives $L_{\overline\Psi}=L_\Psi^{-1}$.
\end{proof}

\begin{remark}[Moving basepoints are essential]
\label{rem:moving-basepoints-essential}
Let $\mathcal P=\{0\}\cup\{\pm j^2:j\ge1\}$ and, on $\ZZ$, set
$$
 g(x)=\left\lfloor\frac{\operatorname{dist}(x,\mathcal P)}2\right\rfloor,
 \qquad \Psi(x)=x+g(x).
$$
Then $\Psi$ is a quasi-isometry and $0\le g(x)\le\sqrt{|x|}+1$, so every fixed-basepoint blow-down is the identity. On the two halves of the gap from $j^2$ to $(j+1)^2$, the averages of $\Delta g(x)=g(x+1)-g(x)$ tend respectively to $1/2$ and $-1/2$. Weak-* limits of the corresponding intervals therefore give invariant means with these values. Thus $[1/2,3/2]\subseteq\mathcal A(\Psi)$. Conversely, $|g(x+k)-g(x)|\le |k|/2+1$, and Proposition \ref{prop:localization} gives the reverse inclusion.  Hence
$$
\mathcal A(\Psi)=[1/2,3/2],
$$
although the identity is the unique fixed-basepoint blow-down. In particular, Proposition \ref{prop:bounded-defect-rigidity} shows that this map does not have bounded Abelian defect.
\end{remark}

\begin{proof}[Proof of \Cref{thm:main-pointed}]
Compactness and \eqref{eq:intro-pointed-convexification} are Lemma \ref{lem:retangent-pointed} and Theorem \ref{thm:pointed-convexification}. The singleton characterization of $\mathcal A(\Psi)$ is Corollary \ref{cor:uniform-singleton}.
\end{proof}

\section{Extreme linearizations and cohomological induction}\label{sec:extreme-cohomology}

In this section, for an extreme point $L\in\mathcal A(\Psi)$, we select an ergodic invariant measure in the fiber of a suitable affine map over $L$ and use the associated coupling to extend $L^*$ to a cohomology-ring isomorphism between $H^*(M;\RR)$ and $H^*(N;\RR)$.

Let $\mathscr X_\Psi$ be the closure of $\{\Psi^c:c\in N\}$ in $M^N$, with the product topology. For each fixed $n\in N$, all values $\Psi^c(n)$ lie in one word-metric ball of $M$. Hence, $\mathscr X_\Psi$ is a compact metrizable space. Every $f\in\mathscr X_\Psi$ fixes the identity and satisfies the same quasi-isometry inequalities and coarse-surjectivity bound as $\Psi$. For the last assertion, if $f_j\to f$ pointwise and $m\in M$, choose $n_j$ with $d_M(f_j(n_j),m)$ bounded by the common coarse-surjectivity constant. The lower quasi-isometry inequality confines the $n_j$ to a finite set, so a constant subsequence gives the required preimage for $f$. There is a continuous right action of $N$ on $\mathscr X_\Psi$ given by
\begin{equation}\label{eq:translate-hull-action}
 (f\cdot n)(x):=f(n)^{-1}f(nx) \quad (n,x\in N, f\in\mathscr X_\Psi).
\end{equation}

Let $\mathcal P_N(\mathscr X_\Psi)$ be the weak-$\ast$ compact convex set of $N$-invariant Borel probability measures on $\mathscr X_\Psi$. For $\mu\in\mathcal P_N(\mathscr X_\Psi)$, define
\begin{equation}\label{eq:hull-average-map}
 L_\mu\pi_N(n):=
 \int_{\mathscr X_\Psi}\pi_M(f(n))\,d\mu(f).
\end{equation}
Observe that $L_\mu$ is a homomorphism on $\pi_N(N)$, and hence extends to a linear map $L_\mu:V_N\to V_M$.

\begin{lemma}[Invariant-measure linearizations and invariant-mean linearizations]
\label{lem:hull-image-is-A}
The continuous affine map $\mu\mapsto L_\mu$ has image exactly $\mathcal A(\Psi)$.
\end{lemma}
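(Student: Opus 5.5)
The plan is to set up a correspondence in both directions between right-invariant means on $N$ and $N$-invariant probability measures on $\mathscr X_\Psi$. The correspondence goes through the orbit map
$$
 \theta:N\to\mathscr X_\Psi,\qquad \theta(c):=\Psi^c,
$$
together with the identity $\pi_M(\Psi^c(n))=\Phi_n(c)$. We first check that $\theta$ is equivariant: $(\Psi^c\cdot n)(x)=\Psi^c(n)^{-1}\Psi^c(nx)=\Psi(cn)^{-1}\Psi(cnx)=\Psi^{cn}(x)$, so $\theta(cn)=\theta(c)\cdot n$. Next, the evaluation $f\mapsto\pi_M(f(n))$ is continuous on $\mathscr X_\Psi$, because it factors through a single coordinate of a product of discrete spaces. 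It is also bounded, since $f(n)$ ranges over a fixed ball.

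For the inclusion $\mathcal A(\Psi)\subseteq\{L_\mu\}$, let $\mathsf A$ be a right-invariant mean. Define a positive normalized functional on $C(\mathscr X_\Psi)$ by $h\mapsto\mathsf A[h\circ\theta]$. By Riesz this is integration against a probability measure $\mu$. By equivariance, $(h\circ\theta)(\cdot\,n)=(h(\cdot\,\cdot n))\circ\theta$, so right-invariance of $\mathsf A$ gives $N$-invariance of $\mu$. Applying this with $h(f)=\pi_M(f(n))$ (componentwise) yields $L_\mu\pi_N(n)=\mathsf A[\Phi_n]=L_{\mathsf A}\pi_N(n)$.

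The reverse inclusion is the main obstacle, since an invariant measure need not be supported on the orbit closure in a way that is directly realized by a mean. I would handle it in three steps.

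\emph{Ergodic case.} Let $\mu$ be ergodic and fix a Følner sequence of balls $B_{R_j}$. For $\mu$-a.e.\ $f$, the pointwise ergodic theorem for amenable groups (Lindenstrauss, along a tempered subsequence of balls) gives
$$
\frac1{|B_{R_j}|}\sum_{x\in B_{R_j}}\pi_M\bigl((f\cdot x)(n)\bigr)\to L_\mu\pi_N(n)
$$
for each $n$ in the countable set $N$. Choose such a generic $f$. Since $f$ lies in the orbit closure and each average involves only finitely many coordinates, for each $j$ we can pick $c_j$ with $\Psi^{c_j}$ agreeing with $f$ on the finite set $B_{R_j}\cdot\{n_1,\dots,n_j\}\cup B_{R_j}$. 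Because $(f\cdot x)(n)=f(x)^{-1}f(xn)$ only involves finitely many coordinates, this gives $\frac1{|B_{R_j}|}\sum_{x\in c_jB_{R_j}}\Phi_n(x)\to L_\mu\pi_N(n)$ for every $n$. A weak-$*$ limit of the averages over the right Følner sets $c_jB_{R_j}$ is then a right-invariant mean $\mathsf A$ with $L_{\mathsf A}=L_\mu$, exactly as in Corollary~\ref{cor:mean-realization}.

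\emph{Avoiding the ergodic theorem.} Alternatively, I could replace the pointwise ergodic theorem by the mean ergodic theorem plus Markov's inequality to find a good $f$ for each $j$ separately, then diagonalize over $j$.

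\emph{General case.} For a general invariant $\mu$, use ergodic decomposition $\mu=\int\mu_\omega\,d\tau(\omega)$. Affineness and continuity give $L_\mu=\int L_{\mu_\omega}\,d\tau\in\overline{\mathrm{conv}}\,\mathcal A(\Psi)=\mathcal A(\Psi)$, using the compactness and convexity from Lemma~\ref{lem:linearization-compact-convex}. Continuity and affineness of $\mu\mapsto L_\mu$ are immediate from the continuity of the evaluations.
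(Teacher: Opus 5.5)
Your proof is correct. The forward inclusion is the same as the paper's: push a right-invariant mean forward along the equivariant orbit map $c\mapsto\Psi^c$ and apply Riesz. For the reverse inclusion, however, you take a genuinely different, dynamical route.

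\emph{Your route.} You treat ergodic $\mu$ first. You pick a $\mu$-generic point for the pointwise (or mean) ergodic theorem along balls; this is legitimate here, since balls are symmetric, two-sided F{\o}lner, and tempered by polynomial growth, so the right action causes no trouble. You approximate that point on large finite sets by orbit points $\Psi^{c_j}$, and take a weak-$*$ limit of averages over the right F{\o}lner sets $c_jB_{R_j}$. You then reach general $\mu$ by ergodic decomposition, using that $\mathcal A(\Psi)$ is closed and convex. For this last step Krein--Milman already suffices: ergodic measures are the extreme points of $\mathcal P_N(\mathscr X_\Psi)$, and $\mu\mapsto L_\mu$ is continuous and affine.

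\emph{The paper's route.} It needs no ergodic theory at all. Because the orbit is dense, $(Tu)(c):=u(\Psi^c)$ defines a unital positive isometry $T:C(\mathscr X_\Psi)\to\ell^\infty(N)$. So the state $Tu\mapsto\int u\,d\mu$ on its range extends by Hahn--Banach to a state $\eta$ on $\ell^\infty(N)$. This $\eta$ need not be invariant, but for a fixed right-invariant mean $\mathsf m$ the functional $\mathsf A(f):=\mathsf m_g[\eta(f(\cdot\,g))]$ is right-invariant. Since $(Tu)(\cdot\,g)=Tu_g$ with $u_g(F):=u(F\cdot g)$, invariance of $\mu$ gives $\mathsf A(Tu)=\int u\,d\mu$. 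Taking $u(F)=\pi_M(F(n))$ then yields $L_{\mathsf A}=L_\mu$.

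\emph{Comparison.} The paper's argument is shorter, treats every invariant $\mu$ at once, and uses only Hahn--Banach. Yours relies on Lindenstrauss's theorem (or von Neumann's with a diagonal argument) and the ergodic decomposition. In return, it shows that for ergodic $\mu$ the linearization $L_\mu$ is realized by explicit averages of $\Phi_n$ over translated balls, in the spirit of Corollary~\ref{cor:mean-realization} and Lemma~\ref{lem:translated-ball-support}.
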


\begin{proof}
The map $c\mapsto\Psi^c$ is $N$-equivariant. If $\mathsf A$ is a right-invariant mean on $N$, define a state on $C(\mathscr X_\Psi)$ by
$$
 u\longmapsto \mathsf A_c[u(\Psi^c)].
$$
where $\mathsf A_c$ denotes that the mean is being operated on the function $c\mapsto u(\Psi^c)$. The Riesz representation theorem gives an invariant probability measure $\mu$ on $\mathscr X_\Psi$, and
$$
 L_\mu\pi_N(n)
 =\mathsf A_c[\pi_M(\Psi^c(n))]
 =\mathsf A_c[\Phi_n(c)]
 =L_{\mathsf A}\pi_N(n).
$$
Conversely, let $\mu$ be invariant. The map $c\mapsto\Psi^c$ induces a unital positive isometry
$$
 T:C(\mathscr X_\Psi)\longrightarrow\ell^\infty(N),
 \qquad (Tu)(c):=u(\Psi^c),
$$
because $\{\Psi^c:c\in N\}$ is dense in $\mathscr X_\Psi$. The state $Tu\mapsto\int_{\mathscr X_\Psi} u\,d\mu$ on the range of $T$ extends, by Hahn--Banach theorem, to a state $\eta$ on $\ell^\infty(N)$. Fix a right-invariant mean $\mathsf m$ on $N$ and set
$$
 \mathsf A(f):=
 \mathsf m_g\left[\eta(f(\,\cdot\, g))\right] \quad(f\in \ell^\infty(N)),
$$
where again $\mathsf m_g$ denotes that the mean is being operated on the function $g\mapsto \eta(f\,\cdot\,g)$. Then, $\mathsf A$ is right invariant. If $f=Tu$, invariance of $\mu$ and \eqref{eq:translate-hull-action} give
$$
 \eta(f(\,\cdot\,g))
 =\eta(Tu_g)
 =\int_{\mathscr X_\Psi}u(F\cdot g)\,d\mu(F)
 =\int_{\mathscr X_\Psi}u(F)\,d\mu(F).
$$
where $u_g(F):=u(F\cdot g)$, $F\in\mathscr X_\Psi, g\in N$. Thus, $\mathsf A(f)=\int_{\mathscr X_\Psi}u(F)\,d\mu(F)$ when $f=Tu$. Applying this by taking $u$ to be the coordinate functions of the map $F\mapsto\pi_M(F(n))$, $n\in N$ gives $L_{\mathsf A}=L_\mu$.
\end{proof}

For the remaining portion of this section, we require the notion of uniform measure equivalence. We shall be following the definition of uniform measure equivalence given in \cite{GotfredsenKyed21}.

\begin{proposition}[The coupling associated with an ergodic measure]
\label{prop:hull-ume-coupling}
If $\mu\in\mathcal P_N(\mathscr X_\Psi)$ is ergodic, then
$$
 (\Omega,\eta):=
 (\mathscr X_\Psi\times M,\,\mu\times\text{counting measure})
$$
is an ergodic uniform measure-equivalence coupling of $M$ and $N$ for the commuting left actions
\begin{align*}
 m_0\cdot(f,m)&=(f,m_0m),\\
 n\star(f,m)&=(f\cdot n^{-1},mf(n^{-1})).
\end{align*}
\end{proposition}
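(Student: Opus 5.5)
The plan is to verify each requirement in Gotfredsen--Kyed's definition of an ergodic uniform measure-equivalence coupling for $(\Omega,\eta)$ in turn. Throughout I will use only four facts about points $f\in\mathscr X_\Psi$, all recorded above: $f(e)=e$, the same $(\Lambda,A)$-inequalities as $\Psi$, the same coarse-surjectivity constant $R$ as $\Psi$, and the right action \eqref{eq:translate-hull-action}.

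\emph{Algebraic part.} First I check that \eqref{eq:translate-hull-action} is a genuine right action. The identity is $(f\cdot a)\cdot b=f\cdot(ab)$: expanding, $f(ab)^{-1}f(a)\,f(a)^{-1}f(abx)=f(ab)^{-1}f(abx)$.
\begin{itemize}
\item Using this, $n_1\star(n_2\star(f,m))$ has first coordinate $f\cdot(n_1n_2)^{-1}$. Its second coordinate is $m\,f(n_2^{-1})\,f(n_2^{-1})^{-1}f(n_2^{-1}n_1^{-1})=m\,f((n_1n_2)^{-1})$. So $\star$ is a left action.
\item The two actions commute, because $M$ multiplies the second coordinate on the left and $N$ multiplies it on the right by an element depending only on $f$.
\item Both actions are Borel, since the maps involved are continuous in the product topology.
\item Both preserve $\eta$. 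For $M$ this is clear. For $N$, the map $(f,m)\mapsto(f\cdot n^{-1},m f(n^{-1}))$ is, by Fubini, invariance of $\mu$ in the first coordinate composed with a bijection of each fibre $\{f\}\times M$. Counting measure is preserved by such a bijection.
\end{itemize}

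\emph{Freeness and fundamental domains.} The $M$-action is free, with fundamental domain $\mathscr X_\Psi\times\{e\}$ of measure $1$.

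For $N$, suppose $n\star(f,m)=(f,m)$. Then $f(n^{-1})=e$ and $f\cdot n^{-1}=f$. The second condition reads $f(n^{-1}x)=f(x)$ for all $x$, so $f(n^{-k})=e$ for every $k$. The lower quasi-isometry inequality then bounds $|n^{-k}|_{S_N}$ uniformly in $k$, which forces $n=e$ because $N$ is torsion-free. So the $N$-action is free everywhere.

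Next, the $N$-orbit of $(f,m)$ has second coordinates $m f(n^{-1})$, $n\in N$. By coarse surjectivity of $f$, some of these lie in the ball $B_M(e,R)$. By the lower quasi-isometry inequality, only a uniformly bounded finite number of them do. To obtain a Borel fundamental domain $Y\subseteq\mathscr X_\Psi\times B_M(e,R)$, I fix an enumeration of $N$ and select, in each orbit, the orbit point with second coordinate in $B_M(e,R)$ reached by the first $n$ in the enumeration. This $Y$ has $\eta(Y)\le|B_M(e,R)|<\infty$.

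\emph{Uniformity.} This is the step I expect to be the main obstacle, since the exact formulation of the uniform condition in \cite{GotfredsenKyed21} must be matched.
\begin{itemize}
\item For the $N$-action read through the $M$-fundamental domain, $n\star(f,e)=(f\cdot n^{-1},f(n^{-1}))$. The $M$-valued cocycle is therefore $f(n^{-1})$, which lies in the ball of radius $\Lambda|n|_{S_N}+A$ for every $f$. Hence it takes only finitely many values for each fixed $n$.
\item For the reverse direction, the cocycle $M\times Y\to N$ sends $m_0\cdot y$ back into $Y$ by some $n$. Both $y$ and the resulting point have second coordinate in $B_M(e,R)$. This gives $d_M(f(n^{-1}),m_0^{-1}\cdot\text{const})\le 2R$ roughly. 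By the lower quasi-isometry inequality, $|n|_{S_N}\le\Lambda(|m_0|_{S_M}+2R+A)$, so this cocycle is finitely valued for each fixed $m_0$ as well.
\end{itemize}
Some care is needed with the bookkeeping of which element multiplies which side, but I expect only these estimates.

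\emph{Ergodicity.} Finally, the $M$-action has fundamental domain $\mathscr X_\Psi\times\{e\}$, and the induced $N$-action on $\Omega/M\cong\mathscr X_\Psi$ is $f\mapsto f\cdot n^{-1}$. Thus $M\times N$-invariant sets in $\Omega$ correspond exactly to $N$-invariant sets in $\mathscr X_\Psi$. Ergodicity of $\mu$ therefore gives ergodicity of the coupling.
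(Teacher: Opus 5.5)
Your route is the paper's route. It uses the same commutation and invariance checks, and the same freeness argument ($f(n^{k})=e$ for all $k$, then the lower quasi-isometry bound, then torsion-freeness). It also takes an $N$-fundamental domain inside $Z:=\mathscr X_\Psi\times B_M(e,R)$, uses the same two quasi-isometry estimates for uniformity, and reduces ergodicity to $N$-invariant subsets of $\mathscr X_\Psi$.

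\textbf{The selection step fails as written.} The rule ``select the orbit point with second coordinate in $B_M(e,R)$ reached by the first $n$ in the enumeration'' depends on which point of the orbit you start from. An orbit has no distinguished base point. If the enumeration begins with $e$, every point of $Z$ selects itself, so an orbit meeting $Z$ in more than one point gets more than one representative. You therefore do not obtain a fundamental domain.

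\textbf{The repair.} The paper uses the standard fact that, by your freeness and finiteness observations, the orbit relation restricted to $Z$ is a finite Borel equivalence relation and so admits a Borel transversal. Concretely, fix a Borel linear order $\preceq$ on the standard Borel space $Z$. Set $X:=\{z\in Z:\ z\preceq n\star z\text{ for every }n\in N\text{ with }n\star z\in Z\}$. This is a countable intersection of Borel sets, and it meets each orbit exactly once, namely in the $\preceq$-minimum of the finite nonempty set in which the orbit meets $Z$. You should also record $\eta(X)>0$, which follows from $\Omega=\bigsqcup_{n}n\star X$ and $\eta\neq0$.

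\textbf{Uniformity.} The definition in \cite{GotfredsenKyed21} that the paper checks is not finite-valuedness of the cocycles. It is the pair of inclusions $j^{-1}\circ i(C\times Y)\subseteq D\times X$ and $i^{-1}\circ j(D\times X)\subseteq C\times Y$, where $Y=\mathscr X_\Psi\times\{e_M\}$. Your estimates give these directly once you evaluate them at the right points.
\begin{itemize}
\item If $n^{-1}\star(f,m)=(f\cdot n,mf(n))\in X$, then $|mf(n)|_{S_M}\le R$, hence $|n|_{S_N}\le\Lambda(|m|_{S_M}+R+A)$.
\item For $(f,a)\in X$ one has $n\star(f,a)=i\bigl(af(n^{-1}),(f\cdot n^{-1},e_M)\bigr)$ with $|af(n^{-1})|_{S_M}\le R+\Lambda|n|_{S_N}+A$.
\end{itemize}
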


\begin{proof}
Clearly the actions of $M$ and $N$ on $\Omega$ commute and preserve $\eta$, which makes $(\Omega,\eta)$ a Borel measure $M\times N$-space. Take $Y=\mathscr X_\Psi\times\{e_M\}$ and $\nu=\eta|_Y$. Let $R$ be a common coarse-surjectivity constant for the maps in $\mathscr X_\Psi$. Every $N$-orbit in $\Omega$ meets $\mathscr X_\Psi\times B_M(e,R)$: given $(f,m)$, choose $n$ with $d_M(f(n),m^{-1})\le R$. The intersection of any orbit with $\mathscr X_\Psi\times B_M(e,R)$ is finite. Indeed, if both $(f,a)$ and $n^{-1}\star(f,a)=(f\cdot n,af(n))$ have second coordinate in $B_M(e,R)$, then $|f(n)|_{S_M}\le2R$, and the common lower quasi-isometry inequality bounds $|n|_{S_N}$ uniformly. The resulting finite Borel equivalence relation on $\mathscr X_\Psi\times B_M(e,R)$ admits a Borel transversal $X$. Set $\mu_X=\eta|_X$. 

Define $i:M\times Y\to \Omega$ by $i(m,(f,e_M))=(f,m)$ and $j:N\times X\to \Omega$ by $j(n,y)=n\star y$ for $y\in X$. The $N$-action is free: if $n\star(f,m)=(f,m)$, then $f(n^{-k})=e_M$ for every $k\in\mathbb Z$. The lower quasi-isometry inequality bounds $|n^k|_{S_N}$ uniformly, so $n$ has finite order and hence $n=e_N$ by torsion-freeness. Since the $N$-action is free and $X$ meets each orbit once, $j$ is a Borel bijection with Borel inverse. The disjoint decomposition $\Omega=\bigsqcup_{n\in N}n\star X$ and invariance of $\eta$ show that $j$ transports counting measure times $\mu_X$ to $\eta$. Moreover, $0<\mu_X(X)\le |B_M(e,R)|<\infty$; positivity follows from $\eta\ne0$ and the countable decomposition of $\Omega$ above. The map $i$ similarly transports counting measure times $\nu$ to $\eta$, so these maps give strict measure-equivalence isomorphisms. Let $(\Lambda,A)$ be the common quasi-isometry constants for the maps in $\mathscr X_\Psi$. If $j^{-1}\circ i(m,f)=(n,y)$, then
$$
 y=n^{-1}\star(f,m)=(f\cdot n,mf(n))\in X.
$$
Thus $|f(n)|_{S_M}\le |m|_{S_M}+R$, and the lower quasi-isometry inequality gives $|n|_{S_N}\le\Lambda(|m|_{S_M}+R+A)$. Conversely, if $y=(f,a)\in X$, then
$$
 i^{-1}\circ j(n,y)=(af(n^{-1}),(f\cdot n^{-1},e_M)),
 \qquad
 |af(n^{-1})|_{S_M}\le R+\Lambda|n|_{S_N}+A.
$$
Consequently, for every $t>0$,
\begin{align*}
 j^{-1}\circ i(B_M(e,t)\times Y)
 &\subseteq B_N(e,\Lambda(t+R+A))\times X,\\
 i^{-1}\circ j(B_N(e,t)\times X)
 &\subseteq B_M(e,R+\Lambda t+A)\times Y.
\end{align*}
Thus, for every compact $C \subset M$, there is some $t > 0$ such that $C \subseteq B_M(e, t)$, and $D = B_N(e, \Lambda(t+R+A))$ is a compact subset of $N$ satisfying $j^{-1}\circ i(C \times Y) \subseteq D \times X$. Similarly, for every compact $D \subset N$, there is some $t > 0$ such that $D \subseteq B_N(e,t)$, and $C = B_M(e, R+\Lambda t+A)$ is a compact subset of $M$ satisfying $i^{-1}\circ j(D \times X) \subseteq C \times Y$. This shows that $(\Omega,\eta)$ is a uniform measure-equivalence coupling of $M$ and $N$.

A measurable set $W\subseteq \Omega$ invariant under both groups is, by $M$-invariance, of the form $W=A\times M$ for some measurable set $A\subseteq\mathscr X_\Psi$. Since $W$ is $N$-invariant, we have $A$ is also $N$-invariant. Ergodicity of $\mu$ then proves ergodicity of the coupling.
\end{proof}

We identify
$$
 H^1(N;\RR)=\Hom(N,\RR)=V_N^*,
 \qquad
 H^1(M;\RR)=V_M^*.
$$

\begin{theorem}[Cohomological compatibility of extreme points]
\label{thm:extreme-cohomology}
For every $L\in\operatorname{Ext}\mathcal A(\Psi)$, there is an isomorphism of graded unital real algebras
\begin{equation}\label{eq:extreme-ring-isomorphism}
 \mathcal I_L:H^*(M;\RR)\longrightarrow H^*(N;\RR)
\end{equation}
whose restriction to degree one is
\begin{equation}\label{eq:degree-one-is-L-star}
 \mathcal I_L|_{H^1}=L^*:V_M^*\longrightarrow V_N^*.
\end{equation}
\end{theorem}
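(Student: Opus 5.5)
Given an extreme point $L$, I first select an ergodic invariant measure over it, then extract a cohomology ring isomorphism from the associated coupling, and finally compute its degree-one part.

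\emph{Step 1: an ergodic measure in the fiber.} By Lemma \ref{lem:hull-image-is-A}, the continuous affine map $\mu\mapsto L_\mu$ sends the compact convex set $\mathcal P_N(\mathscr X_\Psi)$ onto $\mathcal A(\Psi)$. Hence the fiber $\mathcal F_L:=\{\mu:L_\mu=L\}$ is a nonempty weak-$\ast$ compact convex set. Because $L$ is extreme in $\mathcal A(\Psi)$, $\mathcal F_L$ is a face of $\mathcal P_N(\mathscr X_\Psi)$: if $\mu=t\mu_1+(1-t)\mu_2\in\mathcal F_L$, then $L=tL_{\mu_1}+(1-t)L_{\mu_2}$ forces $L_{\mu_1}=L_{\mu_2}=L$. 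By Krein--Milman, $\mathcal F_L$ has an extreme point $\mu$. This $\mu$ is extreme in $\mathcal P_N(\mathscr X_\Psi)$, hence ergodic.

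\emph{Step 2: the ring isomorphism from the coupling.} Proposition \ref{prop:hull-ume-coupling} turns $\mu$ into an ergodic uniform measure-equivalence coupling $(\Omega,\eta)$ of $M$ and $N$. I then invoke the Gotfredsen--Kyed induction \cite{GotfredsenKyed21}, in the form used by Sauer \cite{Sauer2006}. The coupling yields a map on cohomology with coefficients in $L^\infty(\Omega)$ (or in $L^\infty$ of the fundamental domains). Ergodicity identifies the invariant part with $\RR$, and this produces a graded unital algebra isomorphism $\mathcal I:H^*(M;\RR)\to H^*(N;\RR)$, multiplicative with respect to cup products. I take $\mathcal I_L:=\mathcal I$, possibly after renormalizing by the coupling index $\mu_X(X)/\nu(Y)$ in degree zero conventions. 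Note that $\nu(Y)=1$ here, and the index must be accounted for so that the map is unital.

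\emph{Step 3: computing the degree-one part, the main obstacle.} The hard part is to identify the degree-one map explicitly as $L^*$. For $\chi\in\Hom(M,\RR)=V_M^*$, the induced cocycle on $N$ is obtained by composing $\chi$ with the $M$-valued measure-equivalence cocycle $\alpha:N\times X\to M$ and then integrating over $X$ against $\mu_X$. I would write $\alpha$ concretely. Under the identification $i$, a point $(f,e_M)\in Y$ moved by $n^{-1}$ goes to $(f\cdot n,f(n))$, so the $M$-cocycle over $Y$ is $(n,f)\mapsto f(n)$, up to the coboundary that passes from $X$ to $Y$. This coboundary is bounded because $X\subseteq\mathscr X_\Psi\times B_M(e,R)$, so it integrates to zero against the invariant measure. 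The induced class is then the homomorphism
$$n\mapsto\int\chi(f(n))\,d\mu(f)=\chi\bigl(L_\mu\pi_N(n)\bigr)=(L^*\chi)(\pi_N(n)),$$
using that $\chi$ factors through $\pi_M$. Thus $\mathcal I|_{H^1}=L^*$, possibly up to the index normalization, which I would check to be $1$ with the chosen identifications (or absorb by scaling). The delicate points are:
\begin{itemize}
\item keeping track of which side the groups act on, and of inverses, in the Gotfredsen--Kyed construction;
\item verifying that the degree-one induction formula in \cite{GotfredsenKyed21} really is integration of the cocycle composed with $\chi$;
\item justifying the passage from $X$ to $Y$ by a bounded, hence integrable, coboundary.
\end{itemize}
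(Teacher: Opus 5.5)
Your proposal is correct and follows the paper's proof essentially step for step. You take an extreme point of the face $\mathcal F_L$ to get an ergodic invariant measure, use Proposition~\ref{prop:hull-ume-coupling} with Gotfredsen--Kyed induction to get the ring isomorphism, and compute the degree-one part by integrating $\chi\circ\pi_M$ against $\mu$ along $(n,f)\mapsto f(n)$ on $Y\cong\mathscr X_\Psi$, which gives $n\mapsto\chi(L\pi_N(n))$, i.e.\ $L^*\chi$.

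Your index and $X$-to-$Y$ worries are unnecessary: the $M$-valued cocycle used to induce from $M$ already lives on the $M$-fundamental domain $Y$, and $\nu(Y)=1$. The one input you should state explicitly, as the paper does, is that these nilpotent groups have property $H_T^k$ and finite-dimensional real cohomology, so reduced and ordinary cohomology agree and Gotfredsen--Kyed's theorem applies.
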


\begin{proof}
By \Cref{lem:hull-image-is-A}, the fiber
$$
 \mathcal F_L:=
 \{\mu\in\mathcal P_N(\mathscr X_\Psi):L_\mu=L\}
$$
is nonempty, compact, and convex. Let $\mu$ be an extreme point of $\mathcal F_L$, which exists by Krein-Milman theorem. Since $L$ is an extreme point in $\mathcal A(\Psi)$, $\mu$ is an extreme point of $\mathcal P_N$ and hence is ergodic.

By \Cref{prop:hull-ume-coupling}, $\mu$ gives an ergodic uniform measure-equivalence coupling of $M$ and $N$. Finitely generated torsion-free nilpotent groups have property $H_T^k$ for every $k$ and their real cohomology groups are finite-dimensional, hence ordinary and reduced cohomology agree \cite{GotfredsenKyed21}.  The induction construction of Gotfredsen--Kyed \cite{GotfredsenKyed21}, applied to this coupling, gives the graded ring isomorphism \eqref{eq:extreme-ring-isomorphism}. We take $\mathcal I_L$ to be the particular isomorphism given by that construction and identify $\mathcal I_L|_{H^1}$.

For trivial real coefficients, an inhomogeneous $1$-cocycle on a (discrete) group $G$ is a function $q:G\to\RR$ satisfying $q(gh)=q(g)+q(h)$. Since every $1$-coboundary is zero, we have $H^1(G;\RR)=\Hom(G,\RR)$. The corresponding invariant homogeneous cocycle is $(g_0,g_1)\mapsto q(g_0^{-1}g_1)$. Conversely, the real-valued homomorphism represented by a invariant homogeneous cocycle $\zeta$ is $g\mapsto\zeta(e_G,g)$. In the induction construction of Gotfredsen--Kyed \cite{GotfredsenKyed21}, $\mathcal I_L|_{H^1}$ is induced by $\kappa^1$. Use $G=M$ and $H=N$ in the notation of Gotfredsen--Kyed \cite{GotfredsenKyed21}, and identify $Y$ with $\mathscr X_\Psi$. Their coupling cocycle $\omega_M:N\times\mathscr X_\Psi\to M$ is defined by
$$
 n\star i(m,f)=i\bigl(m\omega_M(n,f)^{-1},f\cdot n^{-1}\bigr).
$$

For $\chi\in V_M^*$, define
$$
 \xi_\chi(m_0,m_1)
 =\chi\bigl(\pi_M(m_0^{-1}m_1)\bigr).
$$
For $n_0,n_1\in N$, we have
\begin{align*}
 \kappa^1(\xi_\chi)(n_0,n_1)
 &=\int_{\mathscr X_\Psi}
   \chi\bigl(\pi_M(f(n_0)^{-1}f(n_1))\bigr)\,d\mu(f)\\
 &=\int_{\mathscr X_\Psi}
   \chi\bigl(\pi_M((f\cdot n_0)(n_0^{-1}n_1))\bigr)\,d\mu(f)\\
 &=\int_{\mathscr X_\Psi}
   \chi\bigl(\pi_M(f(n_0^{-1}n_1))\bigr)\,d\mu(f)\\
 &=\chi\bigl(L_\mu\pi_N(n_0^{-1}n_1)\bigr)\\
 &=\chi\bigl(L\pi_N(n_0^{-1}n_1)\bigr).
\end{align*}
The element in $H^1(N;\RR)=\Hom(N,\RR)$ representing $\mathcal I_L(\chi)$ is therefore
$$
 n\longmapsto\kappa^1(\xi_\chi)(e_N,n)
 =\chi\bigl(L\pi_N(n)\bigr).
$$
By definition, $L^*\chi$ is the functional $v\mapsto\chi(Lv)$ on $V_N$, whose corresponding element in $\Hom(N,\RR)$ is $n\mapsto\chi(L\pi_N(n))$. This proves $\mathcal I_L|_{H^1}=L^*$ and \eqref{eq:degree-one-is-L-star}.
\end{proof}

\begin{corollary}[Cohomology in the bounded-defect regime]
\label{cor:bounded-defect-cohomology}
If $\Psi$ has bounded Abelian defect, then there is an isomorphism of graded unital real algebras
$$
 \mathcal I:H^*(M;\RR)\longrightarrow H^*(N;\RR)
$$
whose restriction to degree one is
$$
 \mathcal I|_{H^1}=L_\Psi^*:V_M^*\longrightarrow V_N^*.
$$
Its inverse restricts in degree one to $(L_\Psi^{-1})^*:V_N^*\to V_M^*$.
\end{corollary}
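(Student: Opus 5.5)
The plan is to reduce the corollary to Theorem~\ref{thm:extreme-cohomology} using the singleton description of $\mathcal A(\Psi)$ from Proposition~\ref{prop:bounded-defect-rigidity}.

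\emph{Existence of $\mathcal I$.} Since $\Psi$ has bounded Abelian defect, Proposition~\ref{prop:bounded-defect-rigidity} gives $\mathcal A(\Psi)=\{L_\Psi\}$. A one-point compact convex set is its own extreme point, so $L_\Psi\in\operatorname{Ext}\mathcal A(\Psi)$. Theorem~\ref{thm:extreme-cohomology} then yields a graded unital algebra isomorphism $\mathcal I:=\mathcal I_{L_\Psi}:H^*(M;\RR)\to H^*(N;\RR)$ whose degree-one part is $L_\Psi^*$.

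The normalization $\Psi(e_N)=e_M$ causes no trouble. Left-translating $\Psi$ by a fixed element of $M$ changes $\pi_M\circ\Psi$ only by a constant, so both the defect bound and $L_\Psi$ are unaffected. This also follows from the bounded-perturbation invariance in Proposition~\ref{prop:bounded-defect-rigidity}.

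\emph{The inverse.} Since $\mathcal I$ is an isomorphism, its inverse is a graded algebra isomorphism. In degree one it is the inverse of $L_\Psi^*$. Because $L_\Psi$ is a linear isomorphism by Proposition~\ref{prop:bounded-defect-rigidity}, we have
\[
 (L_\Psi^*)^{-1}=(L_\Psi^{-1})^*:V_N^*\to V_M^*,
\]
which is the claimed restriction.

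For a more symmetric picture, note that Proposition~\ref{prop:bounded-defect-rigidity} shows every normalized coarse inverse $\overline\Psi$ has bounded Abelian defect with $L_{\overline\Psi}=L_\Psi^{-1}$. Applying the first step to $\overline\Psi$ therefore produces an isomorphism $H^*(N;\RR)\to H^*(M;\RR)$ with degree-one part $(L_\Psi^{-1})^*$. This agrees with $\mathcal I^{-1}$ in degree one, but the statement only needs the linear-algebra identity above.

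\emph{Main obstacle.} Essentially nothing remains to be proved. The only point needing care is confirming that the hypotheses of Theorem~\ref{thm:extreme-cohomology} are met: $\Psi$ must be a normalized quasi-isometry between finitely generated torsion-free nilpotent groups, and $L_\Psi$ must be extreme. Both hold, so the corollary follows.
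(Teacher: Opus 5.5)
Your proposal is correct and matches the paper's proof: the singleton $\mathcal A(\Psi)=\{L_\Psi\}$ from Proposition~\ref{prop:bounded-defect-rigidity} makes $L_\Psi$ an extreme point, and Theorem~\ref{thm:extreme-cohomology} supplies $\mathcal I$. The inverse's degree-one part is then $(L_\Psi^*)^{-1}=(L_\Psi^{-1})^*$ by linear algebra.
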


\begin{proof}
By \Cref{prop:bounded-defect-rigidity}, the singleton set $\mathcal A(\Psi)=\{L_\Psi\}$ has $L_\Psi$ as an extreme point. The claim is therefore \Cref{thm:extreme-cohomology}, together with the degree-one restriction of its inverse.
\end{proof}

\begin{remark}
The maps $L_\Psi^*$ and $(L_\Psi^{-1})^*$ are canonical and independent of the invariant mean. 
\end{remark}

\begin{proof}[Proof of \Cref{thm:main-extreme-cohomology}]
This is \Cref{thm:extreme-cohomology}.
\end{proof}

\section{Isometric asymptotic cones with a multiplicative constant gap}
\label{sec:metric-gap}

We apply \Cref{thm:extreme-cohomology} to two word metrics on one nilpotent group. Their asymptotic cones are isometric, but the multiplicative constants of all quasi-isometries between them are bounded away from one.

Let $\mathfrak h$ be the seven-dimensional real nilpotent Lie algebra whose Chevalley--Eilenberg differential, in a basis $a,b,c,d,e,f,g$ of $\mathfrak h^*$, is
\begin{equation}\label{eq:seven-dimensional-algebra}
 da=db=dc=df=0,
 \qquad dd=ab,
 \qquad de=ad+bc,
 \qquad dg=ae-cd+cf.
\end{equation}
Let $H$ be the corresponding simply connected nilpotent Lie group. The center of $\mathfrak h$ is $\RR X_7$, so $\mathfrak h$ is indecomposable, since a direct sum of two nonzero nilpotent Lie algebras has center of dimension at least two.

Let $X_1,\ldots,X_7$ be the basis dual to $a,b,c,d,e,f,g$. The lower-central-series weights are
$$
 1,1,1,2,3,1,4,
$$
and the associated graded Lie algebra is
\begin{equation}\label{eq:seven-dimensional-graded}
 \operatorname{gr}(\mathfrak h)
 =(0,0,0,12,14,0,15).
\end{equation}
Its first layer is
$$
 V=\operatorname{span}\{X_1,X_2,X_3,X_6\}.
$$
Define a graded automorphism $P$ of $\operatorname{gr}(\mathfrak h)$ by
\begin{equation}\label{eq:graded-shear-P}
 P(X_6)=X_6+X_3,
 \qquad P(X_i)=X_i\quad(i\ne6).
\end{equation}
This is well defined because $X_3$ and $X_6$ are central in the graded Lie algebra. On first cohomology,
\begin{equation}\label{eq:P-star-on-c}
 P^*c=c+f.
\end{equation}

\begin{lemma}[The cohomology obstruction]
\label{lem:seven-dimensional-cohomology}
For $\alpha\in H^1(\mathfrak h)$, let
$$
 m_\alpha:H^2(\mathfrak h)\longrightarrow H^3(\mathfrak h),
 \qquad u\longmapsto\alpha\wedge u.
$$
Then
\begin{equation}\label{eq:cohomology-rank-jump}
 \operatorname{rank}m_c=1,
 \qquad
 \operatorname{rank}m_{c+f}=2.
\end{equation}
More generally, if $\alpha=Aa+Bb+Cc+Ff$ and $F\ne0$, then $\operatorname{rank}m_\alpha\ge2$. Consequently, if a linear isomorphism $L:V\to V$ has the property that $L^*:H^1(\mathfrak h)\to H^1(\mathfrak h)$ extends to an automorphism of $H^*(\mathfrak h)$, then
\begin{equation}\label{eq:ring-compatible-hyperplane}
 c(LX_6)=0.
\end{equation}
\end{lemma}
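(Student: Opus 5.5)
The plan is to compute the low-degree Chevalley--Eilenberg cohomology of $\mathfrak h$ explicitly, read off the ranks in \eqref{eq:cohomology-rank-jump}, and then use the fact that ranks of multiplication maps are invariant under ring automorphisms.

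\emph{Step 1: degree one.} From \eqref{eq:seven-dimensional-algebra}, the closed $1$-forms are exactly $\operatorname{span}\{a,b,c,f\}$. No $1$-coboundaries exist, so $H^1(\mathfrak h)=\operatorname{span}\{a,b,c,f\}$. Under the duality with $V=\operatorname{span}\{X_1,X_2,X_3,X_6\}$ this is $V^*$, and $L^*$ acts by precomposition.

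\emph{Step 2: degrees two and three.} The exact $2$-forms are spanned by $ab$, $ad+bc$ and $ae-cd+cf$. I would compute the kernel of $d:\Lambda^2\to\Lambda^3$ by writing a general $2$-form in the $21$ monomials and applying the Leibniz rule. I would do the same for $d:\Lambda^3\to\Lambda^4$ and the image of $\Lambda^2$, which gives explicit bases of $H^2$ and $H^3$. To organize this, I would use the grading by lower-central weight: the differential preserves the weight of a monomial, which splits the computation into small blocks. Poincaré duality for the unimodular algebra $\mathfrak h$ gives $H^4\cong (H^3)^*$ and serves as a dimension check.

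\emph{Step 3: ranks.} For $\alpha=Aa+Bb+Cc+Ff$, I would compute the matrix of $m_\alpha:H^2\to H^3$ in these bases. For $\alpha=c$, I would show the image is one-dimensional. For $F\ne0$, I would exhibit two classes $u_1,u_2\in H^2$ for which $\alpha\wedge u_1$ and $\alpha\wedge u_2$ are independent in $H^3$. Independence can be certified via the nondegenerate pairing $H^3\times H^4\to H^7\cong\RR$: choose closed $4$-forms $w_1,w_2$ such that the $2\times2$ matrix of top-degree coefficients of $\alpha\wedge u_i\wedge w_k$ has determinant a nonzero multiple of a power of $F$. Taking $\alpha=c+f$ (with $F=1$) then gives rank at least $2$, and the explicit computation should give exactly $2$.

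\emph{Step 4: the consequence.} Let $\Theta$ be a graded automorphism of $H^*(\mathfrak h)$ extending $L^*$. Then $m_{\Theta\alpha}=\Theta\circ m_\alpha\circ\Theta^{-1}$, so $\operatorname{rank}m_{L^*\alpha}=\operatorname{rank}m_\alpha$. Applying this to $\alpha=c$ gives $\operatorname{rank}m_{L^*c}=1$. By Step 3, the $f$-coefficient of $L^*c$ must vanish. That coefficient is $(L^*c)(X_6)=c(LX_6)$, which is \eqref{eq:ring-compatible-hyperplane}.

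The main obstacle is Step 3 for general $\alpha$ with $F\ne0$. This requires a uniform lower bound over a $4$-parameter family rather than a single rank. I would first try choosing $u_1,u_2$ whose products with $a,b,c$ are exact, so that only the $Ff$ term contributes. If that fails, I would track the determinant of the pairing matrix as a polynomial in $A,B,C,F$ and check that it is divisible only by powers of $F$.
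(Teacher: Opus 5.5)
Your route is the paper's: explicit bases of $H^2(\mathfrak h)$ and $H^3(\mathfrak h)$, the matrix of $m_\alpha$ on $H^2$, and invariance of $\operatorname{rank}m_\alpha$ under graded ring automorphisms. Your Step 4 is the paper's closing argument.

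\textbf{The error to fix.} The organizing device in Step 2 is false. The differential $d$ does \emph{not} preserve lower-central weight: $de=ad+bc$ and $dg=ae-cd+cf$ contain the lower-weight terms $bc$, $cd$, $cf$, so $d$ only preserves the filtration. Splitting into blocks by that weight amounts to computing $H^*(\operatorname{gr}\mathfrak h)$. But $\operatorname{gr}\mathfrak h=(0,0,0,12,14,0,15)$ is the five-dimensional filiform algebra plus the abelian factor dual to $c,f$. There every linear change of $(c,f)$, in particular $P^*$, extends to a ring automorphism, so $\operatorname{rank}m_c=\operatorname{rank}m_{c+f}$; indeed $d(de)=abe$ already kills $[abe]$ there. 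The lemma depends entirely on the terms such a splitting would discard. A grading that $d$ does respect is the bigrading $a\mapsto(1,0)$, $b\mapsto(0,1)$, $c\mapsto(2,0)$, $d,f\mapsto(1,1)$, $e\mapsto(2,1)$, $g\mapsto(3,1)$. This gives the block decomposition you wanted.

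\textbf{Step 3 then works.} With that correction, Step 3 goes through as you hoped.
\begin{itemize}
\item $H^2$ has basis $[ac],[ad],[af],[bd],[bf],[cd-ae]$, with $[bc]=-[ad]$ and $[cf]=[cd-ae]$.
\item The coboundaries $d(de)=abe-bcd$, $d(bg)=abe+bcd-bcf$ and $d(ef)=adf+bcf$ give $[bcd]=[abe]$, $[bcf]=2[abe]$ and $[adf]=-2[abe]$.
\item For $\alpha=Aa+Bb+Cc+Ff$ this yields $m_\alpha[ad]=-2F[abe]$ and $m_\alpha\bigl([bd]-\tfrac12[bf]\bigr)=F[bdf]$. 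This is exactly a pair $u_1,u_2$ of the kind you first proposed, whose products with $a,b,c$ are exact.
\item The whole image of $m_\alpha$ lies in $\operatorname{span}\{[abe],[bdf]\}$. This gives the exact values $1$ for $c$ (image $\RR[abe]$) and $2$ for $c+f$.
\item The classes $[abe]$ and $[bdf]$ have bidegrees $(3,2)$ and $(2,3)$. The coboundaries of bidegree $(3,2)$ are spanned by the three listed above, whose span does not contain $abe$. There are no $2$-forms, hence no nonzero coboundaries, of bidegree $(2,3)$.
\end{itemize}
So both classes are nonzero and independent. Your Poincaré-duality pairing certificate is valid but not needed.
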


\begin{proof}
Using \eqref{eq:seven-dimensional-algebra}, it is a routine calculation to find out the following bases:
\begin{align*}
 H^1(\mathfrak h):\quad&\operatorname{span}\{[a],[b],[c],[f]\}\\
 H^2(\mathfrak h):\quad& [ac],[ad],[af],[bd],[bf],[-ae+cd],\\
 H^3(\mathfrak h):\quad& [abe],[acg],[-adg+afg],[-ade+bce],\\
 & [bde],[bdf],[bcg+cde].
\end{align*}
We also observe that
$$
 \operatorname{rank}(d:\Lambda^1\to\Lambda^2)=3,
 \quad
 \operatorname{rank}(d:\Lambda^2\to\Lambda^3)=12,
 \quad
 \operatorname{rank}(d:\Lambda^3\to\Lambda^4)=16.
$$
The relations
$$
 d(de)=abe-bcd,
 \quad d(bg)=abe+bcd-bcf,
 \quad d(ef)=adf+bcf
$$
give
\begin{equation}\label{eq:key-cup-products}
 [f\wedge ad]=-2[abe],
 \quad[c\wedge bd]=-[abe],
 \quad[f\wedge bd]=[bdf].
\end{equation}
The table below illustrates the action of $m_\alpha$ on the basis elements of $H^2(\mathfrak h)$:
$$
\begin{array}{c|cccccc}
u&[ac]&[ad]&[af]&[bd]&[bf]&[-ae+cd]\\ \hline
m_\alpha(u)&0&-2F[abe]&0&-C[abe]+F[bdf]&-2C[abe]&2B[abe].
\end{array}
$$
Thus, $m_c$ has image exactly $\RR[abe]$, so $\operatorname{rank}m_c=1$. If $F\ne 0$, the images of $[ad]$ and $[bd]$ displayed in the table are linearly independent. This proves that $\operatorname{rank}m_\alpha\ge 2$. Further, if only $C$ and $F$ are non-zero then it is evident from the above table that $\operatorname{rank} m_{c+f}=2$.

Any graded ring automorphism of $H^*(\mathfrak h)$ preserves the rank of $m_\alpha$. If $L^*$ extends to such an automorphism, then $\operatorname{rank}m_{L^*c}=\operatorname{rank}m_c=1$. The $f$-coefficient of $L^*c$ must therefore vanish, which implies $c(LX_6)=(L^*c)(X_6)=0$, proving \eqref{eq:ring-compatible-hyperplane}.
\end{proof}

Let
$$
 u_1=X_1,\quad u_2=X_2,\quad u_3=X_3,\quad u_4=X_6,
$$
and
\begin{equation}\label{eq:asymmetric-polytope}
 v:=2u_1+3u_2+5u_3+7u_4,
 \qquad
 B:=\operatorname{conv}\{\pm u_1,\ldots,\pm u_4,\pm v\}.
\end{equation}
Define $\operatorname{Stab}(B)=\{ T\in\operatorname{GL}(V):T(B)=B\}$.

\begin{lemma}[Extreme points and stablilizer]
\label{lem:asymmetric-stable-ball}
Every element of $\{\pm u_1,\ldots,\pm u_4,\pm v\}$ is an extreme point of $B$, and $\operatorname{Stab}(B)=\{I,-I\}$.
\end{lemma}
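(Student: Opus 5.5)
The plan is to argue directly with the explicit coordinates $u_1,\dots,u_4$ of $V$. The extreme-point claim will be handled by exhibiting, for each of the ten points, a linear functional that it maximizes strictly over the remaining nine points. The stabilizer claim will be reduced to a combinatorial statement about the unique linear relation among $u_1,\dots,u_4,v$.

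\emph{Extreme points.} By the symmetry $B=-B$ it suffices to treat $u_1,\dots,u_4$ and $v$. For $v$, take $\varphi=u_1^*+u_2^*+u_3^*+u_4^*$ in the dual basis. Then $\varphi(v)=17$, while $|\varphi(\pm u_i)|=1$ and $\varphi(-v)=-17$. So $v$ is the unique maximizer of $\varphi$ on the generating set, hence on $B$, and $v$ is extreme.

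For $u_i$ with $i\le 3$, take $\varphi=u_i^*-t\,u_4^*$ with $t$ chosen so that $|\varphi(v)|<1$. Here $\varphi(v)=v_i-7t$ with $v_i\in\{2,3,5\}$, so $t=v_i/7$ gives $\varphi(v)=0$. Then $\varphi(u_i)=1$, $|\varphi(\pm u_j)|\le t<1$ for $j\ne i$, and $\varphi(\pm v)=0$. For $u_4$, take $\varphi=u_4^*-\tfrac75u_3^*$ (or any small variant), which gives $\varphi(u_4)=1$, $\varphi(\pm v)=0$, and $|\varphi(\pm u_3)|=7/5$, which is too big. So instead use a mix such as $\varphi=u_4^*-\tfrac{7}{10}(u_2^*\cdot\tfrac{10}{3}\cdot\tfrac13)$. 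Concretely, I would pick $\varphi=u_4^*-s_1u_1^*-s_2u_2^*-s_3u_3^*$ with $2s_1+3s_2+5s_3=7$ and each $|s_k|<1$, for instance $s_1=s_2=s_3=0.7$. This yields strict maximality at $u_4$. In each case a point of $B$ is a convex combination of the generators, so strict maximality implies extremality.

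\emph{Stabilizer.} Let $T\in\operatorname{Stab}(B)$. A linear bijection of $B$ onto itself maps extreme points to extreme points. The set of extreme points of $B$ is contained in the generating set, and by the first part it equals $\{\pm u_1,\dots,\pm u_4,\pm v\}$. Since $T$ is linear it commutes with $-I$. Hence $T$ permutes the five lines spanned by $w_1=u_1,\dots,w_4=u_4,w_5=v$, with signs: $Tw_k=\varepsilon_k w_{\tau(k)}$.

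Any four of $w_1,\dots,w_5$ form a basis, because all coefficients in $v=2u_1+3u_2+5u_3+7u_4$ are nonzero. So up to scaling there is exactly one linear relation $\sum c_kw_k=0$, namely $c=(2,3,5,7,-1)$. Applying $T$ to this relation gives another relation $\sum_k c_k\varepsilon_k w_{\tau(k)}=0$, which must equal $\lambda c$. Therefore the multiset $\{|c_k|\}=\{1,2,3,5,7\}$ equals $|\lambda|\cdot\{1,2,3,5,7\}$. Comparing maxima gives $|\lambda|=1$. Since the five absolute values are distinct, $\tau$ must be the identity. Then $c_k\varepsilon_k=\lambda c_k$ forces $\varepsilon_k=\lambda=\pm1$ for all $k$, so $T=\pm I$. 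Conversely, $\pm I$ clearly stabilize $B$.

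The only step needing care is the choice of exposing functionals in the extreme-point part. The stabilizer argument relies on the extreme points being exactly the ten listed points, which that part supplies.
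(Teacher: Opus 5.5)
Your proof is correct. The stabilizer half is the same argument as the paper's: $T$ permutes the ten extreme points compatibly with $-I$. The one-dimensional space of relations among $u_1,\dots,u_4,v$ must then be preserved up to a scalar $\lambda$. The distinct absolute values $\{1,2,3,5,7\}$ force $|\lambda|=1$ and a trivial permutation, and then all signs equal $\lambda$.

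The extreme-point half takes a different, dual route. The paper argues by contradiction. If $u_i$ (with $u_5=v$) were a convex combination of the other nine points, one would get $u_i=\sum_{j\ne i}d_ju_j$ with $\sum_{j\ne i}|d_j|\le1$. Since any four of the five vectors form a basis, this representation is unique. From $v=2u_1+3u_2+5u_3+7u_4$, its coefficients have $\ell^1$-norm $17$ for $i=5$ and $(18-v_i)/v_i\ge 11/7$ for $i\le4$. The paper leaves this computation as ``easy to see''.

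You instead exhibit an exposing functional for each point:
\begin{itemize}
\item $\sum_k u_k^*$ for $v$;
\item $u_i^*-\tfrac{v_i}{7}u_4^*$ for $u_i$ with $i\le3$;
\item $u_4^*-\tfrac{7}{10}(u_1^*+u_2^*+u_3^*)$ for $u_4$.
\end{itemize}
The last four vanish on $\pm v$ and have absolute value $<1$ on the remaining generators. Your certificates prove the stronger fact that these points are exposed, and each check is a one-line evaluation. The paper's version has the economy of reusing the same linear relation that drives the stabilizer argument.

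In a write-up, delete the abandoned attempt $u_4^*-\tfrac75u_3^*$ and the expression $u_4^*-\tfrac{7}{10}(u_2^*\cdot\tfrac{10}{3}\cdot\tfrac13)$, which is garbled and has no meaning. The choice $s_1=s_2=s_3=\tfrac{7}{10}$ is the one that works.
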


\begin{proof}
Set $u_5:=v$. Showing that $u_i$ is an extreme point of $B$ is equivalent to showing that $u_i\notin \operatorname{conv}\left(\{\pm u_1,\dots\pm u_4,\pm v\}\setminus \{u_i\}\right)$. Suppose that some $u_i$ is not an extreme point of $B$. Then, 
$$ 
u_i = \alpha(-u_i) + \sum_{j \neq i} c_j u_j 
$$
where $\alpha \ge 0$, $c_j \in \mathbb{R}$, and $\alpha + \sum |c_j| \le 1$. Therefore, 
$$
(1 + \alpha) u_i = \sum_{j \neq i} c_j u_j \implies u_i = \sum_{j \neq i} d_j u_j \quad\left(d_j=\frac{c_j}{1 + \alpha}\right). 
$$
and
$$ 
\sum_{j\neq i}|d_j|=\sum_{j \neq i} \left| \frac{c_j}{1 + \alpha} \right| = \frac{\sum |c_j|}{1 + \alpha} \le \frac{1 - \alpha}{1 + \alpha}\le 1 
$$
Via the relation in \eqref{eq:asymmetric-polytope}, it is easy to see that the above cannot hold for any $u_i$. So, every element of $\{\pm u_1,\dots,\pm u_4, \pm v \}$ is an extreme point of $B$.

Let $T \in \operatorname{GL}(V)$ be such that $T(B) = B$. From \eqref{eq:asymmetric-polytope}, we have 
$$
 \sum_{k=1}^5 c_k u_k = 0, \quad (c_1, c_2, c_3, c_4, c_5) = (-2, -3, -5, -7, 1).
$$
Notice that the absolute values of these coefficients $\{1, 2, 3, 5, 7\}$ are pairwise distinct. Since $T$ is an invertible linear map, it permutes the extreme points of $B$. Hence, 
$$
 T(u_k) = \epsilon_k u_{\sigma(k)}
$$
where $\sigma$ is a permutation of the set $\{1,2,3,4,5\}$ and $\epsilon_k \in \{1, -1\}$. Applying the linear map $T$ to the relation $\sum_{k=1}^5 c_k u_k = 0$ yields
$$
 \sum_{k=1}^5 c_k \epsilon_k u_{\sigma(k)} = 0
$$
By re-indexing the sum via $j = \sigma(k)$ (so $k = \sigma^{-1}(j)$), we get 
$$
 \sum_{j=1}^5 \big(c_{\sigma^{-1}(j)} \epsilon_{\sigma^{-1}(j)}\big) u_j = 0
$$
Since $u_1,u_2,u_3,u_4$ are linearly independent, there exists some real number $\lambda \neq 0$ such that for all $j$
$$
 c_{\sigma^{-1}(j)} \epsilon_{\sigma^{-1}(j)} = \lambda c_j \implies |c_{\sigma^{-1}(j)}| = |\lambda| |c_j|.
$$
This condition forces $|\lambda| = 1$. and $|c_{\sigma^{-1}(j)}| = |c_j|$ for all $j$. Since the numbers in $\{1, 2, 3, 5, 7\}$ are pairwise distinct, $\sigma$ has to be the identity permutation. Hence, for each $j$ we get
$$
 c_j \epsilon_j = \lambda c_j
$$
As none of the coefficients $c_j$ are zero, we can divide by $c_j$ to obtain:
$$
 \epsilon_j = \lambda \qquad \text{for all } j=1,2,3,4,5.
$$
If $\lambda = 1$, then $\epsilon_j = 1$ for all $j$, which means $T = I$. If $\lambda = -1$, then $\epsilon_j = -1$ for all $j$, which means $T = -I$. Therefore, $\operatorname{Stab}(B)=\{I, -I\}$.
\end{proof}

Put $\mathfrak h_{\mathbb Q}:=\operatorname{span}_{\mathbb Q}\{X_1,\ldots,X_7\}$ and $\mathcal L:=\operatorname{span}_{\mathbb Z}\{X_1,\ldots,X_7\}$. The structure constants in \eqref{eq:seven-dimensional-algebra} are integers, so $\mathfrak h_{\mathbb Q}$ is a Lie algebra over $\mathbb Q$. By Malcev's theorem \cite[Theorem 2.12 and the paragraph that follows]{Raghunathan72}, the subgroup $\Gamma:=\langle\exp_H(\mathcal L)\rangle$ is a lattice in $H$ and
$$
\operatorname{span}_{\mathbb Q}(\log\Gamma)=\mathfrak h_{\mathbb Q}.
$$
Thus every $\log\gamma$, $\gamma\in\Gamma$, has rational coordinates in the basis $X_1,\ldots,X_7$, and conversely every $X_i$ is a finite rational linear combination of elements of $\log\Gamma$.

Let $\pi:\Gamma\to\Gamma/[\Gamma,\Gamma]$ be the canonical projection and $p:\mathfrak h\to V$ be the linear projection with kernel $[\mathfrak h,\mathfrak h]=\operatorname{span}_{\mathbb R}\{X_4,X_5,X_7\}$. Explicitly,
$$
p\left(\sum_{i=1}^7 t_iX_i\right)
=t_1u_1+t_2u_2+t_3u_3+t_6u_4.
$$
Under the identification $\Gamma/[\Gamma,\Gamma]\otimes_\ZZ\RR\cong V$ via the map sending $\pi(\gamma)\otimes r$ to $rp(\log \gamma)$, we have $\pi(\gamma)\otimes 1=p(\log\gamma)$ for every $\gamma\in\Gamma$. Now,
$$
\operatorname{span}_{\mathbb Q}\{\pi(\gamma)\otimes1:\gamma\in\Gamma\}
=p\bigl(\operatorname{span}_{\mathbb Q}(\log\Gamma)\bigr)
=p(\mathfrak h_{\mathbb Q})
=\operatorname{span}_{\mathbb Q}\{u_1,\ldots,u_4\}.
$$

Thus, if $w_1,\ldots,w_4$ is a $\mathbb Z$-basis of the free part of $\pi(\Gamma)$, each $u_i$ has rational coordinates in this basis:
$$
u_i=\sum_{j=1}^4 a_{ij}(w_j\otimes 1),
\qquad a_{ij}\in\mathbb Q.
$$
Choose a positive integer $q_0$ divisible by the denominators of all the $a_{ij}$. Then $q_0u_i$ is an integer linear combination of the $w_j\otimes 1$, so $q_0u_i\in\{\pi(\gamma)\otimes 1:\gamma\in\Gamma\}$ for every $i$.

The set of extreme points of $B$ and $P(B)$ are $\{\pm u_1,\dots,\pm u_4,\pm v\}$ and $\{\pm u_1,\pm u_2,\pm u_3,\pm (u_4+u_3),\pm P(v) \}$. Hence, every extreme point of $B$ and $P(B)$ is an integer linear combination of the $u_i$. Consequently, whenever $q$ is a positive integer multiple of $q_0$, every extreme point of $qB$ and $qP(B)$ belongs to $\{\pi(\gamma)\otimes 1:\gamma\in\Gamma\}$.

Choose a finite symmetric generating set $R$ of $\Gamma$. Choose a positive integer multiple $q$ of $q_0$ large enough that the set $\{\pi(\gamma)\otimes 1:\gamma\in R\}$ lies in both $qB$ and $qP(B)$. For each $w\in\{u_1,\ldots,u_4,v\}$, choose $s_w,t_w\in\Gamma$ with $\pi(s_w)\otimes1=qw$ and $\pi(t_w)\otimes1=qP(w)$. Let $S=R\cup\{s_w^{\pm1}:w\in\{u_1,\ldots,u_4,v\}\}$ and $T=R\cup\{t_w^{\pm1}:w\in\{u_1,\ldots,u_4,v\}\}$. Then $S,T$ are finite symmetric generating sets of $\Gamma$, and the unit balls of $\ell_S$ and $\ell_T$ are $qB$ and $qP(B)$ respectively. The graded automorphism $P$ is an isometry between the asymptotic cones of $(\Gamma, S)$ and $(\Gamma, T)$.

For symmetric, compact, convex sets $K,K'$ in $V$ containing the origin in their interiors, write
$$
\|D\|_{K\to K'}:=\inf\{t>0:D(K)\subseteq tK'\}.
$$
Equivalently,
$$
\|D\|_{K\to K'}=\sup_{v\in V\setminus\{0\}}\frac{\|Dv\|_{K'}}{\|v\|_K},
$$
where $\|\cdot\|_K$ and $\|\cdot\|_{K'}$ are the Minkowski functionals corresponding to $K$ and $K'$ respectively. It is easy to see that $\|D\|_{K\to K'}\le1$ implies $D(K)\subseteq K'$. Further, define
\begin{equation}\label{eq:def-lambda-zero}
 \lambda_0:=
 \inf_{\substack{D\in\operatorname{GL}(V)\\c(DX_6)=0}}
 \max\left\{
  \|D\|_{B\to P(B)},
  \|D^{-1}\|_{P(B)\to B}
 \right\}.
\end{equation}

\begin{remark}\label{rem:first-layer-identification}
The identification $\Gamma/[\Gamma,\Gamma]\otimes_\ZZ\RR\cong V$ is an isometry between the normed spaces $\left(\Gamma/[\Gamma,\Gamma]\otimes_\ZZ\RR,\ell_S\right)$ and $(V,\|\cdot\|_{qB})$, and also between $\left(\Gamma/[\Gamma,\Gamma]\otimes_\ZZ\RR,\ell_T\right)$ and $(V,\|\cdot\|_{qP(B)})$.    
\end{remark}

\begin{lemma}\label{lem:lambda-zero-gap}
One has $\lambda_0>1$.
\end{lemma}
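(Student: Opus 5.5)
We argue by compactness plus the stabilizer computation of \Cref{lem:asymmetric-stable-ball}. First observe that $\lambda_0\ge1$ trivially: if $\max\{\|D\|_{B\to P(B)},\|D^{-1}\|_{P(B)\to B}\}\le 1$ then $D(B)\subseteq P(B)$ and $D^{-1}(P(B))\subseteq B$, so $D(B)=P(B)$, i.e. $P^{-1}D\in\operatorname{Stab}(B)=\{I,-I\}$, whence $D=\pm P$. But $c(PX_6)=c(X_6+X_3)=1\ne0$, so $D=\pm P$ violates the constraint $c(DX_6)=0$. Thus no admissible $D$ attains the value $1$ exactly, and the task is to rule out admissible $D$ with value tending to $1$.

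Suppose for contradiction $\lambda_0=1$ and take admissible $D_k$ with $\lambda_k:=\max\{\|D_k\|_{B\to P(B)},\|D_k^{-1}\|_{P(B)\to B}\}\to1$. Since all norms on $V$ are equivalent, $\|D_k\|_{B\to P(B)}\le 2$ eventually bounds $D_k$ in operator norm, and likewise $\|D_k^{-1}\|_{P(B)\to B}\le2$ bounds $D_k^{-1}$. Pass to a subsequence with $D_k\to D$ and $D_k^{-1}\to E$; then $DE=ED=I$, so $D\in\operatorname{GL}(V)$. The constraint $c(DX_6)=0$ is a closed linear condition, so it passes to the limit. The operator quantities are continuous in $D$ (they are suprema over the compact unit sphere of continuous functions, equivalently Lipschitz in $D$), giving $\|D\|_{B\to P(B)}\le1$ and $\|D^{-1}\|_{P(B)\to B}\le1$. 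The first paragraph then yields $D=\pm P$, contradicting $c(DX_6)=0$. Hence $\lambda_0>1$; in fact the infimum is attained, by the same compactness, at some admissible $D$ with value strictly above $1$.

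The only step needing care is the continuity/closedness argument: one must check that the infimum cannot escape to degenerate maps, which is exactly why both $\|D\|$ and $\|D^{-1}\|$ appear in \eqref{eq:def-lambda-zero}; bounding both keeps $D_k$ in a compact subset of $\operatorname{GL}(V)$. Everything else reduces to \Cref{lem:asymmetric-stable-ball} and the explicit value $c(PX_6)=1$ from \eqref{eq:graded-shear-P}.
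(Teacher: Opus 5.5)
Your proof is correct and follows essentially the same route as the paper. You take a minimizing sequence with bounded norms and bounded inverses and extract a limit $D\in\operatorname{GL}(V)$ with $c(DX_6)=0$ and $D(B)=P(B)$. Then \Cref{lem:asymmetric-stable-ball} forces $D=\pm P$, which contradicts $c(\pm PX_6)=\pm1$.

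The only difference is cosmetic. The paper gets $\lambda_0\ge1$ from $\|D\|_{B\to P(B)}\|D^{-1}\|_{P(B)\to B}\ge1$. You instead get it, together with strictness for each admissible $D$ and attainment of the infimum, directly from the stabilizer argument.
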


\begin{proof}
Observe that for every $D\in\operatorname{GL}(V)$,
$$
 \|D\|_{B\to P(B)}
 \|D^{-1}\|_{P(B)\to B}\ge1,
$$
so the maximum in \eqref{eq:def-lambda-zero} is at least one and $\lambda_0\ge1$. If $\lambda_0=1$, there is a sequence of maps $(D_j)$ in $\operatorname{GL}(V)$ such that $c(D_jX_6)=0$ and $\max\left\{\|D_j\|_{B\to P(B)},\|D_j^{-1}\|_{P(B)\to B}\right\}\to 1$ as $j\to\infty$. Since a convergent sequence is bounded, $\left(\|D_j\|_{B\to P(B)}\right)$ and $\left(\|D_j^{-1}\|_{P(B)\to B}\right)$ are bounded sequences, so after passing to a subsequence we have $D_j\to D$ and $D_j^{-1}\to D'$ in $\operatorname{End}(V)$. Taking limits in $D_jD_j^{-1}=D_j^{-1}D_j=I$ gives $DD'=D'D=I$. Thus $D\in\operatorname{GL}(V)$ and $D'=D^{-1}$. Since
\begin{align*}
\|D\|_{B\to P(B)}&=\lim_{j\to\infty}\|D_j\|_{B\to P(B)}\le 1, \text{ and }\\ 
\|D^{-1}\|_{P(B)\to B}&=\lim_{j\to\infty}\|D_j^{-1}\|_{P(B)\to B}\le 1,
\end{align*} 
we have $D(B)=P(B)$. Then $P^{-1}D(B)=(B)$, and Lemma \ref{lem:asymmetric-stable-ball} gives $D=\pm P$. But
$$
 c(PX_6)=1,
 \qquad c(-PX_6)=-1,
$$
contradicting that $c(DX_6)=\lim_{j\to\infty}c(D_jX_6)=0$.
\end{proof}

For word metrics $d_S,d_T$ on the same group, put
$$
 \begin{aligned}
 \lambda_{\mathrm{QI}}(d_S,d_T):=\inf\{\Lambda\ge1:
 &\text{ there is a $(\Lambda,A)$-quasi-isometry}\\
 &(\Gamma,d_S)\to(\Gamma,d_T)
 \text{ for some }A\ge0\}.
 \end{aligned}
$$

\begin{theorem}[A multiplicative constant gap]\label{thm:positive-multiplicative-gap}
The two word metrics $d_S,d_T$ constructed above have isometric asymptotic cones, but
\begin{equation}\label{eq:positive-multiplicative-gap}
 \lambda_{\mathrm{QI}}(d_S,d_T)\ge\lambda_0>1.
\end{equation}
\end{theorem}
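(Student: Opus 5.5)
The plan is to take an arbitrary $(\Lambda,A)$-quasi-isometry $\Psi:(\Gamma,d_S)\to(\Gamma,d_T)$, normalize it so $\Psi(e)=e$, and extract from $\mathcal A(\Psi)$ a linear map that is both norm-controlled by $\Lambda$ and cohomologically constrained. The isometry of cones needs no further work: it was observed above that the graded automorphism $P$ carries the unit ball $qB$ of $\ell_S$ onto the unit ball $qP(B)$ of $\ell_T$. By the general theory, $P$ is then an isometry between the sub-Finsler Carnot cones, since the Carnot structures coincide and the norms correspond under $P$. So the content is the lower bound $\Lambda\ge\lambda_0$, after which Lemma \ref{lem:lambda-zero-gap} finishes.

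\emph{Step 1.} By Lemma \ref{lem:linearization-compact-convex}, $\mathcal A(\Psi)$ is a nonempty compact convex set in finite dimensions, so it has an extreme point $L$. Theorem \ref{thm:pointed-convexification} places $L$ in $\mathscr G_{\mathrm{pt}}(\Psi)$. Hence $\widehat L$ is a pointed blow-down, and therefore a $\Lambda$-bi-Lipschitz graded isomorphism. Restricting to the first layer as in Lemma \ref{lem:first-layer-biLip} gives
\[
\Lambda^{-1}\ell_S(w)\le\ell_T(Lw)\le\Lambda\ell_S(w).
\]
Using Remark \ref{rem:first-layer-identification}, these two norms are the Minkowski functionals of $qB$ and $qP(B)$, and the common factor $q$ cancels. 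This yields $\|L\|_{B\to P(B)}\le\Lambda$ and $\|L^{-1}\|_{P(B)\to B}\le\Lambda$.

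\emph{Step 2.} By Theorem \ref{thm:extreme-cohomology}, $L^*$ is the degree-one part of a graded ring isomorphism $H^*(\Gamma;\RR)\to H^*(\Gamma;\RR)$. Nomizu's theorem identifies $H^*(\Gamma;\RR)$ with $H^*(\mathfrak h)$ as rings, and under it $H^1$ corresponds to $V^*$ compatibly with the identification $\Gamma_{\mathrm{ab}}\otimes\RR\cong V$ fixed above. Lemma \ref{lem:seven-dimensional-cohomology} then gives $c(LX_6)=0$. So $L$ is admissible in \eqref{eq:def-lambda-zero}, and $\lambda_0\le\max\{\|L\|,\|L^{-1}\|\}\le\Lambda$. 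Taking the infimum over all $\Lambda$ gives \eqref{eq:positive-multiplicative-gap}.

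I expect the main obstacle to be the bookkeeping in Step 2. One must check that the identification of $H^1(\Gamma;\RR)=\Hom(\Gamma,\RR)$ with $V^*$ used by Theorem \ref{thm:extreme-cohomology} agrees with the one under which Nomizu's isomorphism sends $[c]$ to the functional $c$ on $V$. The key point is that a homomorphism $\chi$ extends, by Malcev, to $\lambda\circ p\circ\log$, with $\lambda$ viewed as a closed left-invariant $1$-form. Nomizu's map is a ring isomorphism, so the rank of cup product with a degree-one class is preserved, and the rank argument of Lemma \ref{lem:seven-dimensional-cohomology} transfers verbatim.
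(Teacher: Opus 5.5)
Your proposal is correct. Step 2 matches the paper, but Step 1 obtains the metric bound by a different route.

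\textbf{Your route.} You put both constraints on a single extreme point $L$.
\begin{itemize}
\item The cohomological constraint $c(LX_6)=0$ comes from Theorem~\ref{thm:extreme-cohomology}.
\item The two-sided norm bound comes from $\operatorname{Ext}\mathcal A(\Psi)\subseteq\mathscr G_{\mathrm{pt}}(\Psi)$ in Theorem~\ref{thm:pointed-convexification}, which makes $\widehat L$ a pointed blow-down and hence $\Lambda$-bi-Lipschitz.
\end{itemize}

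\textbf{The paper's route.} The paper first notes that $E\mapsto c(EX_6)$ is linear. Since it vanishes on every extreme point, Krein--Milman makes it vanish on all of $\mathcal A(\Psi)$. The paper then applies this to a first-layer Pansu differential $D$ of a fixed-basepoint blow-down. This $D$ lies in $\mathcal A(\Psi)$ by Corollary~\ref{cor:mean-realization}, and it satisfies $\|D\|_{B\to P(B)},\|D^{-1}\|_{P(B)\to B}\le\Lambda$ by Lemma~\ref{lem:first-layer-biLip}.

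\textbf{What each buys.}
\begin{itemize}
\item The paper only needs the one-sided inclusion $\overline{\operatorname{conv}}\mathscr D_1(\Psi)\subseteq\mathcal A(\Psi)$ from Theorem~\ref{thm:main-attainment}. So the gap theorem does not depend on the exact convexification of Section~\ref{sec:pointed-blowdowns}, namely Lemma~\ref{lem:discrete-weak-derivatives} and the exposed-point argument. This matches the introduction's claim that Theorems~\ref{thm:main-attainment} and~\ref{thm:main-extreme-cohomology} plus compactness suffice.
\item The paper also gets a by-product: every element of $\mathcal A(\Psi)$, in particular every first-layer differential of every blow-down, lies in the hyperplane $c(EX_6)=0$.
\item Your route is shorter at the final step and needs no Krein--Milman.
\item Your route also never touches non-extreme points of $\mathcal A(\Psi)$, which need not be invertible.
\item The cost is that your route rests on the heavier Theorem~\ref{thm:pointed-convexification}.
\end{itemize}
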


\begin{proof}
Let $\Psi:(\Gamma,d_S)\to(\Gamma,d_T)$ be a $(\Lambda,A)$-quasi-isometry. We shall keep in mind Remark \ref{rem:first-layer-identification} which will allow us to treat elements of $\mathcal A(\Psi)$ as maps on $V$ with their operator norms unchanged. For every extreme point $E\in\mathcal A(\Psi)$, Theorem \ref{thm:extreme-cohomology} gives a ring isomorphism whose degree-one part is $E^*$. Let $M=\Gamma\backslash H$. Then, $M$ is $K(\Gamma,1)$, and hence $H^*(\Gamma; \mathbb{R}) \cong H^*(M; \mathbb{R})$. Since $M$ is a smooth manifold, we have $H^*(M;\RR)\cong H_{dR}^*(M)$, where $H_{dR}^*(M)$ denotes the de Rham cohomology of $M$. By \cite{Nomizu54}, we have $H^*_{dR}(M) \cong H^*(\mathfrak{h})$. Hence,
$$
H^*(\Gamma; \mathbb{R}) \cong H^*(M; \mathbb{R}) \cong H^*_{dR}(M) \cong H^*(\mathfrak{h}).
$$
These graded algebra isomorphisms identify $\alpha\in V^*$ with $\gamma\mapsto\alpha(p(\log\gamma))$ by integration along $t\mapsto\Gamma\exp_H(t\log\gamma)$, $0\le t\le1$, so their degree-one identification is the one used above. Therefore, Lemma \ref{lem:seven-dimensional-cohomology} gives
$$
 c(EX_6)=0
 \qquad(E\in\operatorname{Ext}\mathcal A(\Psi)).
$$
Since $c$ is linear and $\mathcal A(\Psi)$ is a compact convex set, by Krein-Milman theorem it holds for every $E\in\mathcal A(\Psi)$.

Choose a fixed-basepoint blow-down of $\Psi$ and a Pansu differentiability point. Its first-layer differential $D$ belongs to $\mathcal A(\Psi)$ by Corollary \ref{cor:mean-realization}, therefore $c(DX_6)=0$. By Lemma \ref{lem:first-layer-biLip},
$$
 \|D\|_{B\to P(B)}=\|D\|_{qB\to qP(B)}\le\Lambda,
 \qquad
 \|D^{-1}\|_{P(B)\to B}=\|D^{-1}\|_{qP(B)\to qB}\le\Lambda.
$$
Definition \eqref{eq:def-lambda-zero} now gives $\Lambda\ge\lambda_0$. Taking the infimum over all quasi-isometries proves \eqref{eq:positive-multiplicative-gap}.
\end{proof}

\begin{proof}[Proof of \Cref{thm:main-multiplicative-gap}]
The lattice and its two word metrics are constructed in Section \ref{sec:metric-gap}, and the uniform gap \eqref{eq:uniform-multiplicative-gap} is Theorem \ref{thm:positive-multiplicative-gap}, along with the observation that $\lambda_{\mathrm{QI}}(d_T,d_S)\ge\lambda_0$, since a coarse inverse of a $(\Lambda,A)$-quasi-isometry has the same multiplicative constant after changing the additive constant.
\end{proof}

\appendix
\section{}\label{app:cone-compactness}

\subsection{Density of rescaled lattices and a properness estimate}
\label{app:cone-density-details}

In this subsection, we provide the proofs of \eqref{eq:cone-density} and \eqref{eq:cone-bounded-lift}. Choose a compact set $K_G\subseteq\widetilde G$ with $\widetilde G=GK_G$, and transport the multiplication of $\widetilde G$ to $G_\infty$ by defining
$$
 u*_r^Gv:=
 \sigma_r^G\left((\sigma_r^G)^{-1}(u)(\sigma_r^G)^{-1}(v)\right).
$$
The Baker--Campbell--Hausdorff formula is finite. In adapted lower-central series coordinates, a nested bracket whose input components have total weight $d$ and whose value has weight $k\ge d$ acquires the factor $r^{d-k}$. The components with $k>d$ therefore vanish in the limit, while those with $k=d$ give the bracket of the associated graded Lie algebra. Hence $*_r^G$ converges locally uniformly to multiplication in $G_\infty$.

Given $z\in\overline {B_{G,\infty}(e,R)}$, write $(\sigma_r^G)^{-1}(z)=ak$ with $a\in G$ and $k\in K_G$. Then
$$
 \sigma_r^G(a)=z*_r^G\sigma_r^G(k^{-1}).
$$
Since $\sigma_r^G(k^{-1})\to e$ uniformly for $k\in K_G$, the right-hand side converges to $z$ uniformly on $\overline {B_{G,\infty}(e,R)}$. This proves \eqref{eq:cone-density}.

For \eqref{eq:cone-bounded-lift}, fix norms on the spaces $W_i$ and write $\log_{\widetilde G}a=Y_1+\cdots+Y_s$, with $Y_i\in W_i$. Then,
$$
\log_\infty\bigl(\sigma_r^G(a)\bigr)
=\sum_{i=1}^s r^{-i}q_i(Y_i).
$$
The set $\log_\infty(\overline {B_{G,\infty}(e,R)})$ is compact, so each of its coordinate projections is bounded. Since each $q_i:W_i\to V_i^G$ is a linear isomorphism, there are constants $A_{i,R}>0$, independent of $a$ and $r$, such that
$$
\|Y_i\|\le A_{i,R}r^i
\qquad(1\le i\le s)
$$
whenever $\sigma_r^G(a)\in\overline {B_{G,\infty}(e,R)}$. Now, there exists a constant $C>0$ such that
$$
|a|_{S_G}\le C\left(1+\max_{1\le i\le s}\|Y_i\|^{1/i}\right)
\qquad(a\in G);
$$
see \cite[proof of Th\'eor\`eme II.1]{Guivarch1973}, \cite[Theorem 2.7]{Breuillard2014}. Consequently, for $r\ge1$,
$$
|a|_{S_G}
\le C\left(1+r\max_{1\le i\le s}A_{i,R}^{1/i}\right)
\le C_Rr,
\qquad
C_R:=C\left(1+\max_{1\le i\le s}A_{i,R}^{1/i}\right).
$$
This proves \eqref{eq:cone-bounded-lift}.

\subsection{Proof of the existence of cone maps}
\label{app:cone-map-proof}

\begin{proof}[Proof of \Cref{lem:cone-map}]
For $G\in\{N,M\}$ set
$$
 \theta_j^G(R):=
 \sup_{z\in\overline {B_{G,\infty}(e,R)}}
 d_{G,\infty}(z,D_j^G).
$$
Then $\theta_j^G(R)\to0$ for each fixed $R$ by \eqref{eq:cone-density}.

Fix $R>0$ and let $x=\sigma_{r_j}^N(a)$ and $y=\sigma_{r_j}^N(b)$ belong to $D_j^N\cap\overline B_{N,\infty}(e,R)$. By \eqref{eq:cone-bounded-lift},
$$
 |a|_{S_N},|b|_{S_N}\le C_Rr_j.
$$
The normalization $\Psi(e)=e$ and the upper quasi-isometry bound gives
$$
 |\Psi(a)|_{S_M},|\Psi(b)|_{S_M}
 \le(\Lambda C_R+A)r_j
$$
for $r_j\ge1$. Applying \eqref{eq:coordinate-pansu-convergence} in $N$ and $M$ gives
\begin{equation}\label{eq:cone-coarse-lip}
 \Lambda^{-1}d_{N,\infty}(x,y)-\omega_j(R)
 \le d_{M,\infty}(\Psi_j(x),\Psi_j(y))
 \le\Lambda d_{N,\infty}(x,y)+\omega_j(R),
\end{equation}
where $\omega_j(R)\to0$.

Choose a countable dense set $D=\{z_1,z_2,\ldots\}\subseteq N_\infty$ with $z_1=e$. Put $R_i=d_{N,\infty}(e,z_i)+1$. For $i\ge2$, choose $x_{i,j}\in D_j^N$ such that
$$
 d_{N,\infty}(x_{i,j},z_i)
 \le\theta_j^N(R_i)+\frac1j,
$$
and take $x_{1,j}=e$. For each fixed $i$, the points $x_{i,j}$ lie in a fixed $d_{N,\infty}$-ball, and \eqref{eq:cone-coarse-lip} shows the same for $\Psi_j(x_{i,j})$. Properness and a diagonal argument give a subsequence such that
$$
 \Psi_j(x_{i,j})\longrightarrow\Psi_\infty(z_i)
 \qquad\text{for every }i.
$$
Passing to the limit in \eqref{eq:cone-coarse-lip} gives
$$
 \Lambda^{-1}d_{N,\infty}(z_i,z_k)
 \le d_{M,\infty}(\Psi_\infty(z_i),\Psi_\infty(z_k))
 \le\Lambda d_{N,\infty}(z_i,z_k).
$$
Hence $\Psi_\infty$ extends uniquely to a $\Lambda$-bi-Lipschitz map $N_\infty\to M_\infty$ with $\Psi_\infty(e)=e$.

We next prove uniform convergence on balls. Fix $R>0$ and $0<\eta<1/2$. Choose a finite $\eta/2$-net $F=\{z'_{1},\dots,z'_{K}\}$ of $\overline {B_{N,\infty}(e,R)}$. For each $k=1,\dots,K$, choose $z_{i_k}\in D$ such that $d_{N,\infty}(z_{i_k},z'_k)<\eta/2$ and set $F:=\{z_{i_1},\dots,z_{i_K}\}\subseteq D$. For these finitely many points,
$$
 d_{N,\infty}(x_{i_k,j},z_{i_k})\to0,
 \qquad
 d_{M,\infty}(\Psi_j(x_{i_k,j}),\Psi_\infty(z_{i_k}))\to0
$$
uniformly in $k$. Given $x\in D_j^N\cap\overline {B_{N,\infty}(e,R)}$, choose $z_{i_k}\in F$ with $d_{N,\infty}(x,z_{i_k})\le\eta$. For large $j$, both $x$ and $x_{i_k,j}$ belong to $\overline {B_{N,\infty}(e,R+2)}$. Equation \eqref{eq:cone-coarse-lip} and the Lipschitz bound for $\Psi_\infty$ give for large enough $j$, uniformly in $x$,
$$
 d_{M,\infty}(\Psi_j(x),\Psi_\infty(x))
 \le(2\Lambda+1)\eta+\omega_j(R+2).
$$
Thus $\limsup_j\gamma_j(R)\le(2\Lambda+1)\eta$, and the arbitraryness of $\eta$ proves \eqref{eq:uniform-conv}.

It remains to prove surjectivity. Let $y\in M_\infty$ and put $R_y=d_{M,\infty}(e,y)+1$. Choose $\bar y_j\in D_j^M$ with
$$
 d_{M,\infty}(\bar y_j,y)
 \le\theta_j^M(R_y)+\frac1j,
$$
and write $\bar y_j=\sigma_{r_j}^M(w_j)$. The points $\bar y_j$ lie in a large enough $d_{M,\infty}$-ball for all $j$, so \eqref{eq:cone-bounded-lift} gives $|w_j|_{S_M}\le C_yr_j$. Since $\Psi$ is coarsely surjective, there exists $B>0$ such that for all $j$, there exists $u_j\in N$ such that $d_M(\Psi(u_j),w_j)\le B$. The lower quasi-isometry bound gives
$$
 |u_j|_{S_N}
 \le\Lambda(|w_j|_{S_M}+A+B)\le C_y'r_j\quad (C_y'=\Lambda(C_y+A+B)).
$$
Put $x_j=\sigma_{r_j}^N(u_j)$. Using \eqref{eq:coordinate-pansu-convergence}, we get that $(x_j)$ lies in a fixed $d_{N,\infty}$-ball and for $C'=B+C_y'$,
$$
 d_{M,\infty}(\Psi_j(x_j),\bar y_j)
 \le\frac{B}{r_j}
 +\sup_{\substack{a,b\in M\\|a|_{S_M},|b|_{S_M}\le C'r_j}}
 \left|
 \frac{d_M(a,b)}{r_j}
 -d_{M,\infty}(\sigma_{r_j}^M(a),\sigma_{r_j}^M(b))
 \right|,
$$
which tends to zero as $j\to\infty$. After passing to a further subsequence, compactness of closed $d_{N,\infty}$-balls gives $x_j\to x_\infty$. Choose $R_0$ so that $x_\infty$ and all $x_j$ lie in $B_{N,\infty}(e,R_0)$. Then \eqref{eq:uniform-conv} gives
$$
\begin{aligned}
 d_{M,\infty}(\Psi_\infty(x_\infty),y)
 &\le\Lambda d_{N,\infty}(x_\infty,x_j)
       +\gamma_j(R_0)
       +d_{M,\infty}(\Psi_j(x_j),y)
 \to0 \text{ as }j\to\infty.
\end{aligned}
$$
Thus $\Psi_\infty$ is surjective, and hence a bi-Lipschitz homeomorphism.
\end{proof}

\section*{Acknowledgements}
The first author was supported in part by SEED Grant RD/0519-IRCCSH0-024 and was a visitor at MPIM Bonn during the initial stages of this work. The second author was supported in part by the PMRF. The third author was supported in part by an IIT Bombay IRCC fellowship, a TIFR Mumbai postdoctoral fellowship, and an NBHM postdoctoral fellowship (No. 0204/17/2025/R\&D-II/12398). The authors are thankful to Roman Sauer for helpful discussions, and Lodha Mathematical Sciences Institute for organizing the programme on Analytic Group Theory. The authors are also grateful to IIT Bombay for providing ideal working conditions.

\bibliographystyle{alpha}

\bibliography{references1}

@article {Breuillard2014,
    AUTHOR = {Breuillard, Emmanuel},
     TITLE = {Geometry of locally compact groups of polynomial growth and
              shape of large balls},
   JOURNAL = {Groups Geom. Dyn.},
  FJOURNAL = {Groups, Geometry, and Dynamics},
    VOLUME = {8},
      YEAR = {2014},
    NUMBER = {3},
     PAGES = {669--732},
      ISSN = {1661-7207,1661-7215},
   MRCLASS = {22D05 (22E25 53C17)},
  MRNUMBER = {3267520},
MRREVIEWER = {Stefan\ Waldmann},
       DOI = {10.4171/GGD/244},
       URL = {https://doi.org/10.4171/GGD/244},
}

@article {BreuillardLeDonne2013,
    AUTHOR = {Breuillard, Emmanuel and Le Donne, Enrico},
     TITLE = {On the rate of convergence to the asymptotic cone for
              nilpotent groups and sub{F}insler geometry},
   JOURNAL = {Proc. Natl. Acad. Sci. USA},
  FJOURNAL = {Proceedings of the National Academy of Sciences of the United
              States of America},
    VOLUME = {110},
      YEAR = {2013},
    NUMBER = {48},
     PAGES = {19220--19226},
      ISSN = {0027-8424,1091-6490},
   MRCLASS = {20F65 (53C17 53C23 53C60)},
  MRNUMBER = {3153949},
MRREVIEWER = {Igor\ Belegradek},
       DOI = {10.1073/pnas.1203854109},
       URL = {https://doi.org/10.1073/pnas.1203854109},
}

@article {Sauer2006,
    AUTHOR = {Sauer, R.},
     TITLE = {Homological invariants and quasi-isometry},
   JOURNAL = {Geom. Funct. Anal.},
  FJOURNAL = {Geometric and Functional Analysis},
    VOLUME = {16},
      YEAR = {2006},
    NUMBER = {2},
     PAGES = {476--515},
      ISSN = {1016-443X,1420-8970},
   MRCLASS = {20F65 (20F16 20J06)},
  MRNUMBER = {2231471},
MRREVIEWER = {Wolfgang\ Pitsch},
       DOI = {10.1007/s00039-006-0562-y},
       URL = {https://doi.org/10.1007/s00039-006-0562-y},
}

@article {Pansu89,
    AUTHOR = {Pansu, Pierre},
     TITLE = {M\'etriques de {C}arnot-{C}arath\'eodory et quasiisom\'etries
              des espaces sym\'etriques de rang un},
   JOURNAL = {Ann. of Math. (2)},
  FJOURNAL = {Annals of Mathematics. Second Series},
    VOLUME = {129},
      YEAR = {1989},
    NUMBER = {1},
     PAGES = {1--60},
      ISSN = {0003-486X,1939-8980},
   MRCLASS = {53C20 (22E40)},
  MRNUMBER = {979599},
MRREVIEWER = {Gudlaugur\ Thorbergsson},
       DOI = {10.2307/1971484},
       URL = {https://doi.org/10.2307/1971484},
}

@article {Shalom2004,
    AUTHOR = {Shalom, Yehuda},
     TITLE = {Harmonic analysis, cohomology, and the large-scale geometry of
              amenable groups},
   JOURNAL = {Acta Math.},
  FJOURNAL = {Acta Mathematica},
    VOLUME = {192},
      YEAR = {2004},
    NUMBER = {2},
     PAGES = {119--185},
      ISSN = {0001-5962},
   MRCLASS = {20F65 (43A07)},
  MRNUMBER = {2096453},
MRREVIEWER = {Peter A. Linnell},
       DOI = {10.1007/BF02392739},
       URL = {https://doi.org/10.1007/BF02392739},
}

@article {Guivarch1973,
    AUTHOR = {Guivarc'h, Yves},
     TITLE = {Croissance polynomiale et p\'eriodes des fonctions
              harmoniques},
   JOURNAL = {Bull. Soc. Math. France},
  FJOURNAL = {Bulletin de la Soci\'et\'e{} Math\'ematique de France},
    VOLUME = {101},
      YEAR = {1973},
     PAGES = {333--379},
      ISSN = {0037-9484},
   MRCLASS = {22D05 (43A05 60B15)},
  MRNUMBER = {369608},
MRREVIEWER = {H.\ Heyer},
       URL = {http://www.numdam.org/item?id=BSMF_1973__101__333_0},
}

@book {DrutuKapovich2018,
    AUTHOR = {Dru\c tu, Cornelia and Kapovich, Michael},
     TITLE = {Geometric group theory},
    SERIES = {American Mathematical Society Colloquium Publications},
    VOLUME = {63},
      NOTE = {With an appendix by Bogdan Nica},
 PUBLISHER = {American Mathematical Society, Providence, RI},
      YEAR = {2018},
     PAGES = {xx+819},
      ISBN = {978-1-4704-1104-6},
   MRCLASS = {20F65 (20E08 20F05 20F16 20F18 20F67 20F69 57M07)},
  MRNUMBER = {3753580},
MRREVIEWER = {Igor\ Belegradek},
       DOI = {10.1090/coll/063},
       URL = {https://doi.org/10.1090/coll/063},
}

@article {AntonelliYoung25,
    AUTHOR = {Antonelli, Gioacchino and Young, Robert},
     TITLE = {Ergodic maps and the cohomology of nilpotent {L}ie groups},
   JOURNAL = {Math. Ann.},
  FJOURNAL = {Mathematische Annalen},
    VOLUME = {392},
      YEAR = {2025},
    NUMBER = {4},
     PAGES = {5491--5519},
      ISSN = {0025-5831,1432-1807},
   MRCLASS = {22E25 (37A05 37A25)},
  MRNUMBER = {4958509},
MRREVIEWER = {Cheng\ Zheng},
       DOI = {10.1007/s00208-025-03208-4},
       URL = {https://doi.org/10.1007/s00208-025-03208-4},
}

@article {GotfredsenKyed21,
    AUTHOR = {Gotfredsen, Thomas and Kyed, David},
     TITLE = {Cohomological induction and uniform measure equivalence},
   JOURNAL = {Fund. Math.},
  FJOURNAL = {Fundamenta Mathematicae},
    VOLUME = {255},
      YEAR = {2021},
    NUMBER = {1},
     PAGES = {19--49},
      ISSN = {0016-2736,1730-6329},
   MRCLASS = {20F65 (20J06 22D05 57M07 57T10)},
  MRNUMBER = {4289662},
MRREVIEWER = {Ryo\ Ohashi},
       DOI = {10.4064/fm778-2-2021},
       URL = {https://doi.org/10.4064/fm778-2-2021},
}

@article {Pansu83,
    AUTHOR = {Pansu, Pierre},
     TITLE = {Croissance des boules et des g\'eod\'esiques ferm\'ees dans les
              nilvari\'et\'es},
   JOURNAL = {Ergodic Theory Dynam. Systems},
  FJOURNAL = {Ergodic Theory and Dynamical Systems},
    VOLUME = {3},
    NUMBER = {3},
      YEAR = {1983},
     PAGES = {415--445},
      ISSN = {0143-3857},
  MRNUMBER = {0741395},
       DOI = {10.1017/S0143385700002054},
       URL = {https://doi.org/10.1017/S0143385700002054},
}

@article {Nomizu54,
    AUTHOR = {Nomizu, Katsumi},
     TITLE = {On the cohomology of compact homogeneous spaces of nilpotent
              {L}ie groups},
   JOURNAL = {Ann. of Math. (2)},
  FJOURNAL = {Annals of Mathematics. Second Series},
    VOLUME = {59},
      YEAR = {1954},
     PAGES = {531--538},
      ISSN = {0003-486X},
  MRNUMBER = {0064057},
       DOI = {10.2307/1969716},
       URL = {https://doi.org/10.2307/1969716},
}

@book {Raghunathan72,
    AUTHOR = {Raghunathan, M. S.},
     TITLE = {Discrete subgroups of {L}ie groups},
    SERIES = {Ergebnisse der Mathematik und ihrer Grenzgebiete, Band 68},
 PUBLISHER = {Springer-Verlag, New York-Heidelberg},
      YEAR = {1972},
     PAGES = {ix+227},
  MRNUMBER = {0507234},
       DOI = {10.1007/978-3-642-86426-1},
       URL = {https://doi.org/10.1007/978-3-642-86426-1},
}

@article {LafontSchmidtVanLimbeek17,
    AUTHOR = {Lafont, Jean-Fran\c cois and Schmidt, Benjamin and van
              Limbeek, Wouter},
     TITLE = {Quasicircle boundaries and exotic almost-isometries},
   JOURNAL = {Ann. Inst. Fourier (Grenoble)},
  FJOURNAL = {Universit\'e{} de Grenoble. Annales de l'Institut Fourier},
    VOLUME = {67},
      YEAR = {2017},
    NUMBER = {2},
     PAGES = {863--877},
      ISSN = {0373-0956,1777-5310},
   MRCLASS = {53C23 (20F67 51F99)},
  MRNUMBER = {3669513},
MRREVIEWER = {Alain\ Valette},
       DOI = {10.5802/aif.3098},
       URL = {https://doi.org/10.5802/aif.3098},
}

\end{document}